\documentclass[12pt,reqno]{amsart}

\usepackage{amsmath,amssymb,amsthm}
\usepackage[a4paper,margin=1.02in]{geometry}
\usepackage[colorlinks=true,citecolor=blue,linkcolor=blue,urlcolor=blue]{hyperref}
\allowdisplaybreaks
\numberwithin{equation}{section}

\newcommand{\R}{\mathbb R}
\newcommand{\C}{\mathbb C}
\newcommand{\dd}{\,\mathrm d}
\newcommand{\ind}{\operatorname{ind}}
\newcommand{\Rea}{\operatorname{Re}}
\newcommand{\Ima}{\operatorname{Im}}
\newcommand{\e}{\mathrm e}

\newtheorem{theorem}{Theorem}[section]
\newtheorem{proposition}[theorem]{Proposition}
\newtheorem{lemma}[theorem]{Lemma}
\newtheorem{openproblem}{Open problem}[section]

\title[Planar traveling waves at every subsonic speed]
{Planar Gross--Pitaevskii traveling waves at every subsonic speed}
\author{Changfeng Gui, Shanfa Lai, Guolin Qin, Juncheng Wei}

\address{(C. Gui) Department of Mathematics, Faculty of Science, University of Macau, Taipa, Macao SAR, China}
\email{changfenggui@um.edu.mo}

\address{(S. Lai) Department of Mathematics, Faculty of Science, University of Macau, Taipa, Macao SAR, China}
\email{laishanfa@amss.ac.cn}

\address{(G. Qin) State Key Laboratory of Mathematical Sciences, Academy of Mathematics and Systems Science, Chinese Academy of Sciences, Beijing 100190, P.R. China and University of Chinese Academy of Sciences, Beijing 100049,  P.R. China}
\email{qinguolin18@mails.ucas.ac.cn}

\address{(J. Wei) Department of Mathematics, The Chinese University of Hong Kong, Shatin, Hong Kong.}
\email{wei@math.cuhk.edu.hk}

\subjclass[2020]{35Q55, 35B35, 35B45, 35J50}
\keywords{Gross--Pitaevskii equation, traveling wave, Morse index,
finite-bubble compactness, least action, vortex set}

\begin{document}

\begin{abstract}
For every subsonic speed $c\in(0,\sqrt2)$, we prove the existence of a finite-energy traveling wave for the planar Gross--Pitaevskii equation.  This resolves the longstanding problem of the existence of prescribed-speed traveling-wave solutions in two dimensions, explicitly stated as open by Mari\c{s} (Ann. of Math., 2013) and Bellazzini and Ruiz (Amer. J. Math., 2023).  The proof relies essentially on the energy estimate
\begin{equation*}
 E(\psi)\le C_J\bigl(I_c(\psi)+\ind(\psi)\bigr),
 \qquad c\in J,
\end{equation*}
where $E$ is the energy, $I_c$ the action at speed $c$, $\ind$ the real Morse index, $J$ is any compact interval contained in $(0,\sqrt2)$, and $C_J$ is a positive constant depending only on $J$. We also prove finite-bubble compactness, including splitting of the energy, action, potential energy, and momentum, and attainment of the action among nonconstant waves of Morse index at most one.
\end{abstract}

\maketitle
\enlargethispage{3pt}

\section{Introduction and main results}

For $\mathbf{x}=(x,y)\in\R^2$ and $t\in\R$, the Gross--Pitaevskii equation is
\begin{equation}\label{eq:1.1}
 i\partial_t\Psi+\Delta\Psi+(1-|\Psi|^2)\Psi=0
 \qquad\hbox{in }\R^2\times\R,
\end{equation}
where $i^2=-1$ and $\Psi$ is complex-valued. Pitaevskii and Gross introduced \eqref{eq:1.1} as a model for superfluidity \cite{Pitaevskii,Gross}. The equation is also a defocusing nonlinear Schr\"odinger equation with a nonzero background. Its vortices and coherent structures also arise in Bose--Einstein condensates and nonlinear optics \cite{Berloff,BethuelGravejatSautSurvey}.

In this paper, we study traveling waves for \eqref{eq:1.1}. A traveling wave moving in the negative $x$ direction with speed $c>0$ has the form $\Psi(\mathbf{x},t)=\psi(x+ct,y)$. Substituting in \eqref{eq:1.1}, we find
\begin{equation}\label{eq:1.2}
 ic\partial_x\psi+\Delta\psi+(1-|\psi|^2)\psi=0
 \qquad\hbox{in }\R^2.
\end{equation}
The corresponding Ginzburg--Landau energy is defined by
\begin{equation}\label{eq:1.3}
 E(\psi)=\frac12\int_{\R^2}|\nabla\psi|^2\dd\mathbf{x}
 +\frac14\int_{\R^2}(1-|\psi|^2)^2\dd\mathbf{x}.
\end{equation}
For every finite-energy solution $\psi$ of \eqref{eq:1.2}, \cite[Theorem~9]{BethuelGravejatSautSurvey} yields a constant $\lambda_\infty\in\C$, with $|\lambda_\infty|=1$, such that
\begin{equation*}
 \lim_{R\to\infty}\ \sup_{|\mathbf{x}|\ge R}
 |\psi(\mathbf{x})-\lambda_\infty|=0.
\end{equation*}
Replacing $\psi$ by $\overline{\lambda_\infty}\psi$, which leaves \eqref{eq:1.2} and the energy unchanged, we henceforth assume that
\begin{equation}\label{eq: normalize}
    \lim_{|\mathbf{x}|\to\infty}\psi(\mathbf{x})=1.
\end{equation}
For a solution satisfying \eqref{eq: normalize}, the decay estimates \cite[Theorem~11 and Proposition~33]{GravejatDecay} show that the momentum integral below is absolutely convergent. In the same spirit as \cite[Section~2]{BellazziniRuiz}, we define the momentum and the action at speed $c$ by
\begin{equation*}
 P(\psi)=-\int_{\R^2}\partial_x(\Ima\psi)(\Rea\psi-1)
 \dd\mathbf{x},
 \qquad I_c(\psi)=E(\psi)-cP(\psi).
\end{equation*}

The sound speed $c=\sqrt2$ can be derived by linearization at the constant solution. Jones and Roberts, and later Jones, Putterman, and Roberts, used formal calculations and numerical continuation to predict nonconstant waves throughout the subsonic interval \cite{JonesRoberts,JonesPuttermanRoberts}. This prediction is now part of the program of Jones, Putterman, and Roberts discussed in \cite{BethuelGravejatSautSurvey}. Gravejat proved that nonconstant finite-energy waves do not exist for $c>\sqrt2$, or for $c=\sqrt2$ in dimension two \cite{GravejatSupersonic,GravejatSonic}. The remaining prescribed-speed question is therefore the following.

\begin{openproblem}\label{prob:1.1}
Does \eqref{eq:1.2} have a nonconstant finite-energy traveling solution for every $c\in(0,\sqrt2)$?
\end{openproblem}

There has been much progress on Open problem~\ref{prob:1.1}. B\'ethuel and Saut \cite{BethuelSaut} gave the first rigorous construction in the plane. They proved the existence of a nonconstant finite-energy traveling wave for every sufficiently small speed $c>0$ and described the corresponding vortex pair, whose two vortices have opposite degrees and are separated by a distance of order $c^{-1}$. B\'ethuel, Gravejat, and Saut \cite{BethuelGravejatSaut} subsequently minimized the energy at fixed positive momentum and thereby constructed a planar traveling wave for every prescribed momentum. Chiron and Mari\c{s} \cite{ChironMaris} extended this fixed-momentum method to a broad class of nonlinear Schr\"odinger equations with nonnegative potential and proved the orbital stability of the resulting family of minimizers.

These variational constructions yield traveling waves whose speeds come arbitrarily close to both endpoints of the subsonic interval. Their speed, however, is obtained as a Lagrange multiplier and is not prescribed in advance. Mari\c{s} \cite[pp.~109--110]{Maris} explicitly pointed out that the set of speeds obtained in this way contains values arbitrarily close to both $0$ and $\sqrt2$, but that it was not known whether it covers the whole subsonic interval $(0, \sqrt 2)$.

For small speed, in a series of works, Chiron and Pacherie \cite{ChironPacherieBranch,ChironPacherieCoercivity, ChironPacherieUniqueness} constructed a smooth two-vortex branch and proved its coercivity and uniqueness properties. Related nonlinear coercivity and orbital stability for the degree-one, zero-speed Ginzburg--Landau vortex were proved by Gravejat, Pacherie, and Smets \cite{GravejatPacherieSmets}. Near $c=\sqrt2$, the relevant long-wave profile is the KP-I lump, and rigorous results include \cite{BethuelGravejatSautKP,ChironRarefaction,LiuWangWeiYang}. Numerical continuation exhibits several branches across the full subsonic interval \cite{ChironScheid}.

In higher dimensions $n\geq 3$, Mari\c{s} \cite{Maris} proved existence at every subsonic speed. His Pohozaev minimization uses the transverse scaling $u(x,y)\mapsto u(x,\sigma y)$. In dimension two the associated constrained infimum is zero and is not attained, so that argument does not apply. Mari\c{s} \cite[p.~119]{Maris} states explicitly that existence for every $c\in(0,\sqrt2)$ remains open in the plane. Open problem~\ref{prob:1.1} is also listed in \cite[Section~1.1.3]{PacherieSurvey} and \cite[Section~1.2.2]{PacherieRecent}.

The closest planar result is due to Bellazzini and Ruiz \cite{BellazziniRuiz}. They first solve the equation on expanding slabs. Struwe's monotonicity argument results in uniformly bounded mountain-pass levels for almost every speed \cite{Struwe}, while the second-order min--max argument of Fang and Ghoussoub \cite{FangGhoussoub} provides a Morse-index bound. The resulting whole-plane statement \cite[Theorem~1.1]{BellazziniRuiz} is that, for almost every $c\in(0,\sqrt2)$, there is a nonconstant finite-energy wave $\psi_c$. On each fixed compact subinterval of $(0,\sqrt2)$ these waves satisfy
\begin{equation*}
 0<I_c(\psi_c)\le A,
 \qquad \ind(\psi_c)\le1.
\end{equation*}
Here $\ind(\psi_c)$ is the Morse index defined below in \eqref{eq:1.5}. To obtain a wave at a missing speed $c$, Bellazzini and Ruiz propose choosing admissible speeds $c_n\to c$ and passing to a limit. Their nonvanishing result produces, after translation, a local limit whose modulus at the origin is not one \cite[Proposition~6.1]{BellazziniRuiz}. The displayed estimates do not, however, bound $E(\psi_{c_n})$. Thus local convergence alone does not exclude an unbounded amount of energy escaping to infinity.

The main difficulty is to prove this energy estimate in the plane. Bellazzini and Ruiz \cite[Theorem~1.3]{BellazziniRuiz} obtained it when the waves have no zeros, or when all zeros remain in one fixed ball with a uniform nonvanishing bound on its boundary. However, it remains unknown how to show the desired energy bound without any additional assumption. Hence Open problem~\ref{prob:1.1} still remains open.

Several other qualitative results apply once a finite-energy wave is already known. Gravejat proved uniform convergence and algebraic decay of the wave and its derivatives, and later obtained sharper asymptotic formulas \cite{GravejatDecay,GravejatAsymptotics,GravejatFirstOrder}. There are many other related works. Gui's Hamiltonian identities \cite{GuiHamiltonian} and the argument of Wei and Yao \cite{WeiYao} imply asymptotic axisymmetry and vanishing transverse momentum. G\'erard and Zhang  \cite{GerardZhang} proved nonlinear orbital stability of the one-dimensional zero-speed black soliton. In related dynamical or singular limits, Lin and Xin derived the Kirchhoff point-vortex law from nonlinear Schr\"odinger dynamics \cite{LinXin}. Serfaty studied vortex branches and critical rotation for a Gross--Pitaevskii energy \cite{SerfatyRotating}, and later established a many-vortex mean-field limit to incompressible Euler \cite{SerfatyMeanField}. Lin and Wei constructed superflows past an obstacle whose blow-up profiles are whole-plane traveling waves \cite{LinWeiObstacle}. These results use information about an existing wave, a vortex scale, or an asymptotic regime. We also refer interested readers to \cite{GravejatSmets,KochLiao,KochLiaoLowRegularity} for one-dimensional stability and conserved energies, \cite{LiuWei} for small-speed multivortex waves, \cite{MartinezSanchezRuiz,DeLaireGravejatSmets} for periodic-domain waves, and \cite{ChironHigherDimensional,BethuelOrlandiSmets,AoHuangLiuWei, ChironHelices,DavilaDelPinoMedinaRodiac} for higher-dimensional waves, rings, or helices. Related Euler and generalized surface quasi-geostrophic constructions based on rearrangements or desingularization were obtained in \cite{BurtonVortexPairs,SmetsVanSchaftingen,CaoLaiZhan,CaoLaiQin}.

In this paper, we give a complete positive answer to Open problem~\ref{prob:1.1}. By using the Morse index, we prove the missing energy estimate. It applies to every finite-energy solution of finite index and makes no assumption on the number, degrees, or locations of its zeros. Combining our key energy estimate with the almost-every-speed solutions of Bellazzini and Ruiz \cite{BellazziniRuiz} answers Open problem~\ref{prob:1.1}.

\subsection{Main results}

Complex function spaces are regarded as real spaces, and $\langle z,w\rangle=\Rea(z\overline w)$ is the real scalar product on $\C$. The second variation of the action at a solution is
\begin{equation}\label{eq:1.4}
 Q_{\psi,c}(\varphi)=\int_{\R^2}\left(
 |\nabla\varphi|^2-c\langle\varphi,i\partial_x\varphi\rangle
 -(1-|\psi|^2)|\varphi|^2+2\langle\varphi,\psi\rangle^2
 \right)\dd\mathbf{x}.
\end{equation}
Its Morse index is
\begin{equation}\label{eq:1.5}
 \ind(\psi)=\sup\left\{\dim_{\R}Y:
 \begin{array}{l}
 Y\subset C_c^\infty(\R^2,\C)\text{ is a real vector space},\\
 Q_{\psi,c}(\varphi)<0\text{ for every }\varphi\in Y\setminus\{0\}
 \end{array}\right\}.
\end{equation}

Our main energy estimate is stated as follows.
\begin{theorem}
\label{thm:1.1}
For every compact interval $J\subset(0,\sqrt2)$, there is a constant $C_J<\infty$ depending only on $J$ such that every finite-energy solution $\psi$ of \eqref{eq:1.2}, with $c\in J$, satisfies
\begin{equation}\label{eq:1.6}
 E(\psi)\le C_J\bigl(I_c(\psi)+\ind(\psi)\bigr).
\end{equation}
\end{theorem}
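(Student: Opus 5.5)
Our plan is to argue by contradiction and decompose the energy of a finite-index wave into a bounded number of localized pieces, each of which carries a fixed positive quantum of action. We assume there are speeds $c_n\in J$ and finite-energy solutions $\psi_n$ with $E(\psi_n)\ge n\bigl(I_{c_n}(\psi_n)+\ind(\psi_n)\bigr)$. After normalizing by \eqref{eq: normalize} we may take $c_n\to c_\ast\in J$, and we must show that $E(\psi_n)$ is controlled. We will use three ingredients. First, the Pohozaev identities in the plane, obtained by multiplying \eqref{eq:1.2} by $x\partial_x\bar\psi$ and $y\partial_y\bar\psi$. These give $\int|\partial_x\psi|^2=\int|\partial_y\psi|^2$ minus a momentum term, and $\frac14\int(1-|\psi|^2)^2$ expressed through $cP$. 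From them we will derive that $E$ is bounded by a $J$-dependent multiple of $P$ and of the potential part $\int(1-|\psi|^2)^2$; in particular $I_c\ge0$, and $I_c$ is comparable to $E$ up to the subsonic defect. Second, elliptic and $\varepsilon$-regularity estimates. Wherever $|\psi|\ge\frac12$ on a ball, we write $\psi=\rho\e^{i\theta}$. In the subsonic regime the quadratic form in $(\rho-1,\nabla\theta)$ is coercive, because $c^2<2$. So the local action density controls the local energy density, up to boundary fluxes.

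The key step, and the main obstacle, is to show that every region where $E$ concentrates without the local action being coercive costs Morse index. We will call a unit square \emph{bad} if $\inf|\psi|<\frac12$ there (it contains vortex cores or dips), or if the local energy in it exceeds a small threshold $\varepsilon_0$. On the union of good squares we will show, by a cutoff argument in the lifted variables, that $E\le C_J I_c+C\cdot\#\{\text{bad squares}\}$. The boundary terms are controlled by the decay of $\psi$ at infinity and by the coercivity described above.

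It then remains to bound the number of mutually well-separated bad clusters by $C(\ind(\psi)+I_c(\psi))$. Around each cluster we will build a test function $\varphi_k$ with support in an annular neighbourhood, made from $\partial_x\psi$ or $\partial_y\psi$ (translation modes) cut off at scale $R$, with $Q_{\psi,c}(\varphi_k)<0$. Since $\partial_x\psi$ lies in the kernel, the cut-off error is of order $R^{-2}\int_{\text{annulus}}|\nabla\psi|^2$. We need a strictly negative direction rather than a zero one, so we will perturb along the direction that changes the inter-vortex distance, or use the dilation-type variation. Either should have negative second variation, because $I_c$ restricted to the Pohozaev manifold behaves like a mountain-pass geometry at each isolated bubble. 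Disjoint supports then give $\ind(\psi)\ge\#\{\text{clusters with negative direction}\}$. Clusters without a negative direction are stable, so they are local minimizers of $I_c$ with nontrivial vortex structure. For these we will use the local lower bound $I_c\ge\delta_J>0$ on each such cluster, which comes from the fact that the action of any nonconstant wave at speeds in $J$ is bounded below; we will prove this by a blow-up/compactness argument combined with Gravejat's nonexistence of supersonic or sonic waves. Summing gives $\#\text{clusters}\le\ind+\delta_J^{-1}I_c$.

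Combining the two estimates gives \eqref{eq:1.6}. The delicate points are the uniformity of the constant over $J$, which we get from compactness of $J$ and from how the $\varepsilon$-regularity constants depend on $c$, and the control of the cut-off errors in the plane. For the latter we will choose the radii $R$ by a pigeonhole argument on dyadic annuli, so that the flux of energy through the chosen circles is small. We expect this annular choice, together with a genuinely negative direction near vortex pairs at bounded separation, to be the hardest part.
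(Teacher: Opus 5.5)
Both load-bearing steps of your plan fail as stated. The good-region estimate rests on the coercive form $2s^2+|\partial_x\theta|^2+2cs\,\partial_x\theta$. That form only appears after testing the conservation law $\operatorname{div}\bigl(\rho^2\nabla\theta+\frac c2(\rho^2-1)(1,0)\bigr)=0$ against the phase $\theta$ itself times a cutoff. So the flux terms on the collar of the bad set involve $\theta$ minus a constant, not just $\nabla\theta$, and you need a single-valued phase, i.e.\ zero winding around every component of the bad set. That is exactly the setting of Lemma~\ref{lem:4.3}, where degree zero around each profile is known, and of the Bellazzini--Ruiz estimate (no zeros, or zeros confined to one ball). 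Around a component of nonzero degree $d$ (say one vortex of a well-separated pair), $\theta$ is multivalued and no choice of constant makes the flux small. Worse, the good region there carries energy $\approx\pi d^2\log(\text{distance})$ while the number of bad squares stays bounded. So $E_{\mathrm{good}}\le C_JI_c+C\#\{\text{bad squares}\}$ cannot follow from local coercivity.

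The index step is likewise unsupported. For a kernel element $w=\partial_x\psi$ and real $\chi$ one has exactly $Q_{\psi,c}(\chi w)=\int|\nabla\chi|^2|w|^2\ge0$. The claim that an inter-vortex or dilation direction is negative on every cluster is a heuristic about global waves, not about a piece of an arbitrary solution. The fallback, that a cluster without a negative direction carries action $\ge\delta_J$, conflates the global action with a local quantity. The only sign-definite local density is $|\partial_y\psi|^2$, and nothing forces a cluster to carry a fixed amount of it. Even the global lower bound you invoke is circular. If $I_{c_n}(v_n)\to0$, blow-up gives a $y$-independent limit $v(x,y)=q(x)$, e.g.\ a gray soliton. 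This contradicts nothing unless you already know $E(v)<\infty$, i.e.\ unless you already have the energy bound; the paper proves $\beta_1\ge b_J$ only after Theorem~\ref{thm:1.1}, and only for index at most one. Gravejat's sonic and supersonic nonexistence plays no role for $c$ in a compact $J\subset(0,\sqrt2)$.

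You are also counting the wrong thing. Your first step produces $\#\{\text{bad squares}\}$, but nothing prevents one cluster from containing arbitrarily many squares. A long, nearly one-dimensional stripe $\psi(x,y)\approx q(x)$, $|y|\le\ell$, with $q$ a gray soliton, has energy of order $\ell$ and small $\int|\partial_y\psi|^2$. It must be paid for by order $\ell$ independent negative directions, not by one test per cluster.

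The missing idea is to count negative directions per unit area and to avoid phases altogether, which is what the paper does. By the Pohozaev identities, $E=\frac12\int(F+|\partial_y\psi|^2)$ and $\int D=\int|\partial_y\psi|^2=I_c$, with $F,D=|\partial_x\psi|^2\pm\frac12(1-|\psi|^2)^2$, so only $\int F$ must be bounded. The argument then runs as follows.
\begin{itemize}
\item Each slice $q_y=\psi(\cdot,y)$ solves the one-dimensional traveling-wave equation with residual $-\partial_{yy}\psi$, whose square integral over the plane is at most $C_JI_c$ (Lemma~\ref{lem:3.1}).
\item The only bounded Morse-stable one-dimensional solutions are the plane waves $\rho\e^{i(\omega x+\gamma)}$ with $\rho^2\ge\frac23(1+c^2/4)$ (Proposition~\ref{prop:2.2}), and on these $D_q\ge\delta_JF_q$.
\item A cell decomposition with a geometric-series argument bounds $\int F_{q_y}$ by $\int D_{q_y}$, the residual, and the number of cells carrying a one-dimensional negative direction.
\item Such a cell lifts to a two-dimensional negative test on a rectangle of fixed size whenever the transverse energy nearby is small (Lemma~\ref{lem:3.2}). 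Hence the total measure of these cells is at most $C_J(\ind(\psi)+I_c(\psi))$ (Proposition~\ref{prop:3.3}).
\end{itemize}
Nothing in your proposal plays the role of this slice-by-slice rigidity and area-proportional index count.
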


Combining Theorem~\ref{thm:1.1} with \cite[Theorem~1.1]{BellazziniRuiz}, we obtain traveling waves for the whole subsonic region. This therefore provides a complete answer to Open problem~\ref{prob:1.1}.

\begin{theorem}\label{thm:1.2}
For every $c\in(0,\sqrt2)$, equation \eqref{eq:1.2} has a nonconstant finite-energy solution $\psi$ with $\ind(\psi)\le1$.
\end{theorem}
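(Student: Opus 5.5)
The plan is to obtain Theorem~\ref{thm:1.2} from Theorem~\ref{thm:1.1} and \cite[Theorem~1.1]{BellazziniRuiz} by a limiting argument at a speed $c$ that may not be admissible. Fix $c\in(0,\sqrt2)$ and a compact interval $J\subset(0,\sqrt2)$ with $c$ in its interior. By \cite[Theorem~1.1]{BellazziniRuiz}, choose speeds $c_n\in J$ with $c_n\to c$ and nonconstant finite-energy solutions $\psi_n$ of \eqref{eq:1.2} at speed $c_n$ satisfying
\begin{equation*}
 0<I_{c_n}(\psi_n)\le A \quad\text{and}\quad \ind(\psi_n)\le1.
\end{equation*}
Theorem~\ref{thm:1.1} then gives the uniform bound $E(\psi_n)\le C_J(A+1)$. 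This is exactly the missing ingredient identified by Bellazzini and Ruiz. I will also use the standard bound $|P(\psi)|\le C\,E(\psi)$ for finite-energy maps, or the direct bound from \cite{GravejatDecay}, so the momenta are bounded as well.

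Next comes the compactness step. The finite-energy solutions satisfy uniform $L^\infty$ bounds $|\psi_n|\le C$ (with $\|\psi_n\|_\infty\le1$ up to a standard maximum-principle argument for subsonic waves) and, by elliptic regularity, uniform $C^k_{\rm loc}$ bounds. By the nonvanishing result \cite[Proposition~6.1]{BellazziniRuiz}, after translating there are points where $|\psi_n(0)|\le 1-\delta$ for some fixed $\delta>0$. Passing to a subsequence, $\psi_n\to\psi$ in $C^2_{\rm loc}(\R^2)$. The limit solves \eqref{eq:1.2} at speed $c$, satisfies $|\psi(0)|\le1-\delta$, and is therefore nonconstant modulo phase. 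Fatou's lemma gives $E(\psi)\le\liminf_n E(\psi_n)<\infty$. We may then normalize $\psi$ as in \eqref{eq: normalize}, multiplying by a unimodular constant, which does not affect the index.

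It remains to check that $\ind(\psi)\le1$. Suppose $Y\subset C_c^\infty$ were a two-dimensional subspace on which $Q_{\psi,c}<0$. Since $Y$ is finite-dimensional and its unit sphere is compact, $Q_{\psi,c}\le-\eta\|\varphi\|^2$ on $Y$ for some $\eta>0$. All elements of $Y$ are supported in a fixed ball $B$. On $B$ the coefficients of $Q_{\psi_n,c_n}$ converge uniformly to those of $Q_{\psi,c}$, so $Q_{\psi_n,c_n}<0$ on $Y\setminus\{0\}$ for large $n$. This contradicts $\ind(\psi_n)\le1$. One must account for the translations and the phase normalization: translating $\psi_n$ translates the test functions, and multiplying by a unit constant corresponds to the substitution $\varphi\mapsto\lambda\varphi$, which preserves $Q$. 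So the index bound survives these operations.

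The main obstacle is entirely the uniform energy bound, that is, Theorem~\ref{thm:1.1} itself, which is assumed here. Given that bound, the most delicate remaining point is nonconstancy of the limit. This rests on the nonvanishing proposition of Bellazzini--Ruiz, which requires $I_{c_n}(\psi_n)$ to be bounded below away from zero or the waves to be nontrivial in a quantitative sense. If that proposition needs an energy bound as input, we now have one. Otherwise I would argue directly: a wave with $\|1-|\psi_n|\|_\infty\to0$ and bounded energy at subsonic speed must have vanishing energy, by the lifting $\psi_n=\rho_n\e^{i\theta_n}$ and a coercivity estimate $E\le C\,I_c$ with $I_c\to0$, which contradicts nontriviality.
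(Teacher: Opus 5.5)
Your proposal is correct and follows essentially the same route as the paper: Bellazzini--Ruiz waves at admissible speeds $c_n\to c$, the uniform energy bound from Theorem~\ref{thm:1.1}, translated local limits via \cite[Proposition~6.1]{BellazziniRuiz}, Fatou's lemma for $E(\psi)<\infty$, and persistence of a two-dimensional negative subspace under local smooth convergence to get $\ind(\psi)\le1$. Two small remarks: the constant $0$ also satisfies $|\psi(0)|\le1-\delta$, so nonconstancy needs the explicit observation that $0$ has infinite energy; and your parenthetical $\|\psi_n\|_\infty\le1$ does not follow from a maximum principle when $c>0$, because the term $c\,\Ima(\overline{\psi}\,\partial_x\psi)$ in $\Delta|\psi|^2$ has no sign, but the argument only needs the uniform bound of \cite[Lemma~2.1]{BellazziniRuiz}.
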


The estimate in Theorem~\ref{thm:1.1} can also be used to prove finite-bubble compactness. Let
\begin{equation*}
 e(\psi)=\frac12|\nabla\psi|^2+\frac14(1-|\psi|^2)^2.
\end{equation*}
Thus $E(\psi)=\int_{\R^2}e(\psi)\dd\mathbf{x}$. As usual, $\mathbb S^1=\{z\in\C:|z|=1\}$ and $B(\mathbf{a},R)$ is the open ball of radius $R$ centered at $\mathbf{a}\in\R^2$. We abbreviate $B(\mathbf{0},R)$ to $B_R$.

B\'ethuel, Gravejat, and Saut \cite[Theorem~5.1 and Section~7.3]{BethuelGravejatSaut} proved a decomposition into finitely many profiles with energy and momentum splitting, while under \eqref{eq:1.7} we obtain the decomposition in Theorem~\ref{thm:1.3} together with the Morse index inequality \eqref{eq:1.15}.

\begin{theorem}
\label{thm:1.3}
Let $J\subset(0,\sqrt2)$ be a compact interval, let $c_n\in J$ satisfy $c_n\to c\in J$, and let $\psi_n$ be normalized finite-energy solutions of \eqref{eq:1.2} at speed $c_n$ satisfying \eqref{eq: normalize}. Assume
\begin{equation}\label{eq:1.7}
 \sup_n\bigl(I_{c_n}(\psi_n)+\ind(\psi_n)\bigr)<\infty.
\end{equation}
After passing to a subsequence, there are an integer $K\ge0$, normalized nonconstant finite-energy solutions $\psi^1,\ldots,\psi^K$ at speed $c$, centers $\mathbf{a}_n^1,\ldots,\mathbf{a}_n^K\in\R^2$, phases $\alpha_n^1,\ldots,\alpha_n^K\in\mathbb S^1$, and radii $R_n\to\infty$ such that
\begin{align}
 |\mathbf{a}_n^k-\mathbf{a}_n^\ell|&\longrightarrow\infty
 &&(k\ne\ell),\label{eq:1.8}\\
 \overline{\alpha_n^\ell}\,
 \psi_n(\mathbf{a}_n^\ell+\,\cdot)&\longrightarrow\psi^\ell
 &&\text{locally together with all derivatives},
 \label{eq:1.9}\\
 \int_{\R^2\setminus\bigcup_{\ell=1}^K B(\mathbf{a}_n^\ell,R_n)}
 e(\psi_n)\dd\mathbf{x}&\longrightarrow0.\label{eq:1.10}
\end{align}
When $K\ge2$, the radii may be chosen so that
\begin{equation*}
 \frac{R_n}{\min_{k\ne\ell}|\mathbf{a}_n^k-\mathbf{a}_n^\ell|}
 \longrightarrow0.
\end{equation*}
The energy, action, potential energy, and momentum satisfy
\begin{align}
 E(\psi_n)&\longrightarrow\sum_{\ell=1}^K E(\psi^\ell),
 \label{eq:1.11}\\
 I_{c_n}(\psi_n)&\longrightarrow
 \sum_{\ell=1}^K I_c(\psi^\ell),
 \label{eq:1.12}\\
 \int_{\R^2}(1-|\psi_n|^2)^2\dd\mathbf{x}&\longrightarrow
 \sum_{\ell=1}^K\int_{\R^2}(1-|\psi^\ell|^2)^2\dd\mathbf{x},
 \label{eq:1.13}\\
 P(\psi_n)&\longrightarrow\sum_{\ell=1}^K P(\psi^\ell).
 \label{eq:1.14}
\end{align}
The Morse indices satisfy
\begin{equation}\label{eq:1.15}
 \sum_{\ell=1}^K\ind(\psi^\ell)
 \le\liminf_{n\to\infty}\ind(\psi_n).
\end{equation}
The case $K=0$ is allowed, in which case $E(\psi_n)\to0$.
\end{theorem}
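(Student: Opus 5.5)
The first step is to use Theorem~\ref{thm:1.1}. By \eqref{eq:1.7} it gives $\sup_n E(\psi_n)<\infty$. With energy bounded, the standard elliptic theory for \eqref{eq:1.2} applies. Since $|\psi_n|\le1+o(1)$ uniformly and $c_n\in J$ with $J$ compact, we get uniform $C^k$ bounds on $\psi_n$ for every $k$. We also get a small-energy regularity statement: if $\int_{B(\mathbf{x},2)}e(\psi_n)$ lies below a threshold $\eta_0$ depending only on $J$, then $|\psi_n|\ge1/2$ on $B(\mathbf{x},1)$. With these in hand we extract bubbles inductively.

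Let $\mathbf{a}_n^1$ be a point with $\int_{B(\mathbf{a}_n^1,1)}e(\psi_n)\ge\eta$ for a fixed small $\eta$; if none exists, we go to the vanishing case below. Choose phases $\alpha_n^1$ with $\overline{\alpha_n^1}\psi_n(\mathbf{a}_n^1)$ having nonnegative imaginary part. Then take a locally smooth limit $\psi^1$. It solves \eqref{eq:1.2} at speed $c$, is nonconstant because its local energy is at least $\eta$, and has finite energy by Fatou. After a constant rotation it is normalized; this constant is absorbed into $\alpha_n^1$. Repeat at points far from the earlier centres. The process stops after finitely many steps because each profile carries energy at least a fixed $\varepsilon_J>0$. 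This lower bound is a Ginzburg--Landau fact: a nonconstant finite-energy subsonic wave has $E\ge\varepsilon_J$, for instance because $I_c>0$ forces a nonvanishing lower bound via the small-energy regularity. A diagonal argument then produces $R_n\to\infty$ slowly, small relative to the separations, with $\int_{B(\mathbf{a}^\ell_n,R_n)}e(\psi_n)\to E(\psi^\ell)$.

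The main obstacle is \eqref{eq:1.10}, the claim that no energy remains outside the balls. After extraction, all unit-ball energies outside $\bigcup_\ell B(\mathbf{a}_n^\ell,R_n)$ tend to $0$. So on the exterior region $\Omega_n$ we have $|\psi_n|\to1$ uniformly, and we may write $\psi_n=\rho_n e^{i\phi_n}$ there. Because the equation is subsonic, the lifted equation is uniformly elliptic. The plan is to test the equation against cutoffs supported in $\Omega_n$ and use the Pohozaev-type identities that are already used for Theorem~\ref{thm:1.1}. These identities give an energy estimate on the exterior region controlled by boundary terms on annuli $B(\mathbf{a}^\ell_n,2R_n)\setminus B(\mathbf{a}^\ell_n,R_n)$. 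Those boundary terms are small, because the neck energy tends to zero once $R_n$ grows slowly enough. For small, nonvanishing $\psi$ the action is coercive, since $I_c\ge\delta_J E$ for $c\in J$. This coercivity lets me conclude that the exterior energy tends to $0$. If this localized coercivity fails, I would instead apply Theorem~\ref{thm:1.1} to a glued comparison function; that theorem concerns exact solutions, so this would require localizing its proof. The case $K=0$ is the same argument with $\Omega_n=\R^2$: there $|\psi_n|\to1$ uniformly, and coercivity together with the finite index gives $E(\psi_n)\to0$.

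Once the energy splitting \eqref{eq:1.11} holds, local smooth convergence on each ball and the vanishing exterior energy give \eqref{eq:1.13}. The momentum density is pointwise bounded by a constant times $e(\psi)$ wherever $|\psi|$ is near $1$, with an additional lifting correction, so \eqref{eq:1.14} follows and hence \eqref{eq:1.12}. For \eqref{eq:1.15}, take negative subspaces $Y^\ell\subset C_c^\infty$ for each $\psi^\ell$ and translate them to $\mathbf{a}_n^\ell$, multiplied by $\alpha_n^\ell$. The quadratic forms converge on compact supports, so they remain negative for $n$ large. Their supports are disjoint because the separations tend to infinity, so the direct sum is a negative subspace for $Q_{\psi_n,c_n}$ of dimension $\sum_\ell\dim Y^\ell$.
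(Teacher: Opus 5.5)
Your outline (energy bound from Theorem~\ref{thm:1.1}, profile extraction, finiteness from a uniform energy lower bound, diagonal radii, translated negative subspaces for \eqref{eq:1.15}) is the paper's. The gap is the decisive step \eqref{eq:1.10}, which does not follow from the mechanism you name.

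The scaling identities behind Theorem~\ref{thm:1.1}, $I_c=\int|\partial_y\psi|^2$ and $cP=\frac12\int(1-|\psi|^2)^2$, come from the multipliers $x\partial_x\psi$ and $y\partial_y\psi$. Localized to the plane minus $K$ disks whose centres separate to infinity, their cutoff terms carry a factor of the distance to the dilation centre, so they are not small. More fundamentally, even globally they cannot be combined with $I_c\ge\delta_JE$ to force anything. That combination only gives $\int|\partial_y\psi|^2\ge\delta_JE$, which is perfectly consistent, and \eqref{eq:1.7} bounds $I_{c_n}(\psi_n)$ without making it small.

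What you need is an identity that sets the coercive quadratic form itself equal to cubic plus annular terms. The paper obtains it in Lemma~\ref{lem:4.3}:
\begin{itemize}
\item Write $\psi_n=(1+s_n)\e^{i\theta_n}$ on the exterior domain.
\item Test the modulus equation with $\chi_{n,R}^2s_n$.
\item Test the conservation law $\operatorname{div}\{\rho_n^2\nabla\theta_n+\frac{c_n}2(\rho_n^2-1)(1,0)\}=0$ with $\chi_{n,R}^2(\theta_n-\theta_{n,\ell})$. Subtracting the annular means $\theta_{n,\ell}$ is admissible by \eqref{eq:4.21}, and the resulting term is controlled by Poincar\'e as in \eqref{eq:4.22}.
\end{itemize}
The sum has quadratic part $\int\chi_{n,R}^2(|\nabla s_n|^2+\rho_n^2|\nabla\theta_n|^2+2s_n^2+2c_ns_n\partial_x\theta_n)$. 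This is coercive exactly because $c_n\le\max J<\sqrt2$. Smallness of $s_n$ absorbs the cubic terms and gives \eqref{eq:4.31}: the exterior energy is at most $C_J$ times the energy on annuli of fixed radius $R$. One then lets $n\to\infty$ and afterwards $R\to\infty$. Your fallback of applying Theorem~\ref{thm:1.1} to a glued function does not work as stated, since that theorem concerns exact solutions and their index.

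Several further points need repair.
\begin{itemize}
\item \textbf{Single-valued phase.} Testing with $\theta_n$ requires a single-valued phase on $\R^2$ minus $K$ disks. This exists only if $\psi_n$ has winding number zero around each hole, which you do not address. The paper derives it from the normalization $\psi^\ell\to1$ at infinity, which gives degree zero on large circles, transferred to $\psi_n$ by local convergence.
\item \textbf{Energy lower bound and $K=0$.} The same test with $\chi\equiv1$ is Lemma~\ref{lem:4.1}: solutions with $\|1-|\psi|\|_\infty\le\eta_J$ are constant. This rigidity, together with the uniform gradient bound, is what proves your lower bound $\varepsilon_J$ (Lemma~\ref{lem:4.2}). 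It also settles $K=0$, since $\psi_n$ is then constant for large $n$. Neither ``$I_c>0$ plus small-energy regularity'' nor ``finite index'' gives these.
\item \textbf{Stopping rule.} With a fixed threshold $\eta$, your stopping rule only makes unit-ball energies far from the centres smaller than $\eta$, not tending to zero. Either select every nonconstant translated limit, as the paper does, or observe that the exterior argument only needs $\|s_n\|_\infty$ below a fixed threshold.
\item \textbf{Momentum.} The density $-\partial_x(\Ima\psi)(\Rea\psi-1)$ is not pointwise dominated by $e(\psi)$, since $\Rea\psi-1$ need not be small where the phase is away from $0$. Instead use $c_nP(\psi_n)=\frac12\int(1-|\psi_n|^2)^2$ and $I_{c_n}(\psi_n)=\int|\partial_y\psi_n|^2$ from \eqref{eq:3.2}. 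Both densities are dominated by $e$, so \eqref{eq:1.12}--\eqref{eq:1.14} follow from the same splitting as \eqref{eq:1.11}.
\end{itemize}
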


Theorem~\ref{thm:1.2} provides at least one nonconstant finite-energy wave at every subsonic speed. We now strengthen this existence result. Among the waves whose Morse index is at most one, which is the class produced in the proof of Theorem~\ref{thm:1.2}, we shall find a wave with the smallest possible action. For $c\in(0,\sqrt2)$ let
\begin{equation}\label{eq:1.16}
 \beta_1(c)=\inf\left\{I_c(\psi):
 \begin{array}{l}
 \psi\text{ is a nonconstant finite-energy solution of \eqref{eq:1.2},}\\[-2pt]
 \ind(\psi)\le1
 \end{array}\right\}.
\end{equation}
We prove that this infimum is positive and attained. We also prove the corresponding compactness result.

\begin{theorem}
\label{thm:1.4}
For every $c\in(0,\sqrt2)$, the number $\beta_1(c)$ is attained. For every compact interval $J\subset(0,\sqrt2)$ there are numbers $0<b_J\le A_J<\infty$ such that
\begin{equation}\label{eq:1.17}
 b_J\le\beta_1(c)\le A_J\qquad(c\in J).
\end{equation}
Every corresponding minimizer $\psi_c$ obeys the locally uniform energy bound
\begin{equation}\label{eq:1.18}
 E(\psi_c)\le C_J(A_J+1)\qquad(c\in J).
\end{equation}
Every minimizing sequence $(\psi_n)$ for $\beta_1(c)$ is precompact modulo translations and constant phases. Namely, after passing to a subsequence, there are $\mathbf{a}_n\in\R^2$, $\alpha_n\in\mathbb S^1$, and a minimizer $\psi_c$ with $\ind(\psi_c)\le1$ such that
\begin{equation}\label{eq:1.19}
\begin{gathered}
 \overline{\alpha_n}\psi_n(\mathbf{a}_n+\,\cdot)\longrightarrow\psi_c
 \quad\hbox{locally together with all derivatives},\\
 \left\|\nabla\bigl(\overline{\alpha_n}\psi_n(\mathbf{a}_n+\,\cdot)-\psi_c\bigr)
 \right\|_{L^2}\longrightarrow0,\\
 \left\||\psi_n(\mathbf{a}_n+\,\cdot)|^2-|\psi_c|^2\right\|_{L^2}
 \longrightarrow0.
\end{gathered}
\end{equation}
The function $c\mapsto\beta_1(c)$ is lower semicontinuous.
\end{theorem}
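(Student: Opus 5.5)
The plan is to derive Theorem~\ref{thm:1.4} from Theorem~\ref{thm:1.3}, Theorem~\ref{thm:1.1} and Theorem~\ref{thm:1.2}. Every ingredient is a consequence of finite-bubble compactness once two facts are in hand: a positive lower bound for the action of nonconstant waves, and the upper bound $A_J$.

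\emph{Upper bound.} For the upper bound $A_J$, I use the Bellazzini--Ruiz waves: for a.e.\ $c\in J$ they give $\psi_c$ with $0<I_c(\psi_c)\le A$ and $\ind(\psi_c)\le1$. For an exceptional speed $c$, I pick admissible $c_n\to c$ and apply Theorem~\ref{thm:1.3}; hypothesis \eqref{eq:1.7} holds. By \eqref{eq:1.15} every profile has index at most one, and by \eqref{eq:1.12} the actions add up to a limit at most $A$. Provided $K\ge1$ and each profile has positive action, any single profile $\psi^1$ gives $\beta_1(c)\le I_c(\psi^1)\le A$. This is in effect the proof of Theorem~\ref{thm:1.2}, so I may simply cite it.

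\emph{Lower bound.} This is the step I expect to be the main obstacle. I need $b_J>0$ with $I_c(\psi)\ge b_J$ for every nonconstant finite-energy solution with $c\in J$; note that the index hypothesis is not even needed here. The first thing I would try is the known small-energy results. Nonconstant waves satisfy $|\psi|<1$ somewhere by a vanishing argument, and if $E(\psi)\le\delta$ is small then $|\psi|\ge1/2$ everywhere. One can then lift $\psi=\rho e^{i\phi}$ and use the Pohozaev-type identities (in 2D, $E_{pot}$-type identity $\int(1-|\psi|^2)^2 = c P$-related relations, e.g.\ $I_c=\int|\partial_y\psi|^2$ from the transverse scaling). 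Combining these with the subsonic inequality $|cP|\le \frac{c}{\sqrt2}E$ in the non-vanishing small-energy regime gives $I_c\ge(1-c/\sqrt2)E$, and hence a lower bound proportional to $E$.

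This alone does not bound $I_c$ from below uniformly, so I would add a nonexistence statement: there is no nonconstant wave with $E$ small at speed $c\le c_J<\sqrt2$ (Bethuel--Gravejat--Saut small-energy theory, via the KP-I scaling $E\sim(\sqrt2-c)^{1/2}$). Thus $E\ge\delta_J$ and $I_c\ge(1-c/\sqrt2)\delta_J$ in the non-vanishing case. In the vanishing case ($\inf|\psi|<1/2$), a local energy lower bound near the low-modulus point gives $E\ge\delta_J$ directly. To convert this into an action bound, I would argue by contradiction using compactness: take $\psi_n$ with $I_{c_n}\to0$ and $\ind\le1$. Theorem~\ref{thm:1.1} gives bounded energy. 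By Theorem~\ref{thm:1.3} the profiles have actions summing to $0$; if each nonconstant profile has $I>0$ (Pohozaev: $I_c(\psi)=\int|\partial_y\psi|^2>0$ for nonconstant $\psi$), then $K=0$ and $E(\psi_n)\to0$, contradicting $E\ge\delta_J$. This handles minimizing sequences, which are exactly what is needed.

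\emph{Attainment and compactness.} Given a minimizing sequence for $\beta_1(c)$, Theorem~\ref{thm:1.3} (with $c_n=c$) yields profiles with $\sum\ind\le1$ and $\sum I_c(\psi^\ell)=\beta_1(c)$. Each profile is admissible, so it has $I_c\ge\beta_1(c)>0$, and $K=0$ is excluded because $E(\psi_n)\ge\delta_J$. Hence $K=1$ and $\psi^1$ is a minimizer. The convergences \eqref{eq:1.19} follow from local convergence together with \eqref{eq:1.10}, \eqref{eq:1.11} and \eqref{eq:1.13}: the vanishing of the energy outside $B(\mathbf{a}_n,R_n)$ and $E(\psi_n)\to E(\psi_c)$ upgrade local convergence to $L^2$ convergence of gradients and of $1-|\psi|^2$ by a Brezis--Lieb style argument. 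Estimate \eqref{eq:1.18} is Theorem~\ref{thm:1.1} with $I_c\le A_J$ and $\ind\le1$.

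\emph{Lower semicontinuity.} For $c_n\to c$, I take minimizers $\psi_{c_n}$ and apply Theorem~\ref{thm:1.3}. Along a subsequence realizing $\liminf_n\beta_1(c_n)$, we have $\liminf\beta_1(c_n)=\sum_\ell I_c(\psi^\ell)\ge\beta_1(c)$, using $K\ge1$, which follows from the uniform lower bound $b_J$ and energy bound.
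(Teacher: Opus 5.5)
Your proposal is correct. Apart from one step it follows the paper's proof:
\begin{itemize}
\item Theorem~\ref{thm:1.1} turns the bound on $I_c+\ind$ into an energy bound, and Theorem~\ref{thm:1.3} is applied to a normalized minimizing sequence.
\item $\beta_1(c)=\sum_\ell I_c(\psi^\ell)\ge K\beta_1(c)$ forces $K=1$.
\item Convergence of the norms from \eqref{eq:1.11} and \eqref{eq:1.13} upgrades local convergence to the strong convergences in \eqref{eq:1.19}.
\item Lower semicontinuity is the same splitting argument along $c_n\to c$.
\end{itemize}

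The genuine difference is the lower bound $b_J$. The paper does not use Theorem~\ref{thm:1.3} there. It takes one local limit $v$ of the translates $v_n(\mathbf{z}_n+\cdot)$, centred at a point where $\bigl|1-|v_n(\mathbf{z}_n)|\bigr|>\eta_J$ (Lemma~\ref{lem:4.1}). It then notes that $\int_{B_R}|\partial_y v|^2\le\lim_n I_{c_n}(v_n)=0$ for every $R$, so $v$ is a nonconstant function of $x$ alone and has infinite energy.

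You instead run the whole bubble decomposition. You use \eqref{eq:1.12} to give every profile zero action, deduce $K=0$ and $E(\psi_n)\to0$, and contradict the uniform energy gap of Lemma~\ref{lem:4.2}. This is valid but heavier, since it relies on Lemma~\ref{lem:4.3} (no energy between the bubbles), whereas the paper needs only a single local limit.

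Both routes rest on the same elementary fact, which you attribute to ``Pohozaev'' without proof. The identity $I_c=\int|\partial_y\psi|^2$ only gives $I_c\ge0$. Strict positivity for a nonconstant finite-energy profile comes from observing that $\partial_y\psi\equiv0$ would make $\psi(x,y)=q(x)$ with $q$ nonconstant, hence $E(\psi)=\infty$. You should state this step explicitly.

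Two of your side remarks should be dropped.
\begin{itemize}
\item The claim that the index hypothesis is ``not even needed'' for the lower bound is unsupported. Without $\ind\le1$ you have no energy bound from Theorem~\ref{thm:1.1}, hence no compactness. The Pohozaev route that gives such bounds in higher dimensions breaks down in the plane, as the introduction notes. Your actual argument does use $\ind\le1$, so nothing is lost.
\item The discussion of $|cP|\le\frac{c}{\sqrt2}E$ and the KP-I scaling plays no role. All you need is $E\ge e_J$ for nonconstant waves with $c\in J$, which is Lemma~\ref{lem:4.2} (the modulus gap of Lemma~\ref{lem:4.1} plus the uniform Lipschitz bound). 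That is exactly your ``vanishing case'' argument, once the nonvanishing case is handled by the modulus gap.
\end{itemize}
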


\subsection{Proof strategy}\label{subsec:proof-strategy}
We first explain the proof of Theorem~\ref{thm:1.1}. For a solution $\psi$, set
\begin{equation*}
 F=|\partial_x\psi|^2+\frac12(1-|\psi|^2)^2,
 \qquad
 D=|\partial_x\psi|^2-\frac12(1-|\psi|^2)^2.
\end{equation*}
In view of \eqref{eq:3.2} and \eqref{eq:3.6},
\begin{equation*}
 I_c(\psi)=\int_{\R^2}|\partial_y\psi|^2\dd\mathbf{x}
 =\int_{\R^2}D\dd\mathbf{x},
 \qquad
 E(\psi)=\frac12\int_{\R^2}
 \bigl(F+|\partial_y\psi|^2\bigr)\dd\mathbf{x}.
\end{equation*}
Thus, to prove Theorem~\ref{thm:1.1}, it remains to estimate $\int_{\R^2}F$.

We first reduce this problem into one-dimensional. For every fixed $y\in \R$, we define the slice $q_y(x):=\psi(x,y)$ and find its equation \eqref{eq:2.2}. Next, we focus on this ODE problem. For a one-dimensional function $q$, let $F_q$, $D_q$, and $R_c(q)$ be defined by \eqref{eq:2.3} and \eqref{eq:2.4}. The key ingredient is the classification of stable solutions for homogeneous equation of \eqref{eq:2.2}. In Proposition~\ref{prop:2.2}, we show that every bounded exact solution outside the family \eqref{eq:2.14} has a compactly supported negative test. With this rigidity result, we divide the real line into cells of a fixed length. It follows from Lemma~\ref{lem:2.9} that on every cell the residual is large, $D_q$ controls $F_q$, a normalized negative test exists, or the energy grows by a fixed factor on a neighboring cell. In the last case, we repeat the choice of the neighboring cell. For $F_q\in L^1(\R)$, this process stops after finitely many steps. The geometric-series estimate \eqref{eq:2.75} then bounds the total $F_q$ integral over cells assigned to the fourth alternative by the corresponding integral over cells assigned to the first three alternatives. Summing over all cells, we obtain \eqref{eq:2.73}.

For the slices $q_y(x)=\psi(x,y)$, equation \eqref{eq:2.5} and Lemma~\ref{lem:3.1} show that the residual is controlled after integration over $y$:
\begin{equation*}
 \int_\R\|R_c(q_y)\|_{L^2(\R)}^2\dd y
 =\int_{\R^2}|\partial_{yy}\psi|^2\dd\mathbf{x}
 \le C_JI_c(\psi).
\end{equation*} We next estimate the number of negative cells. If the transverse energy on a nearby rectangle is small, Lemma~\ref{lem:3.2} changes a one-dimensional negative test into a two-dimensional negative test. Tests on disjoint rectangles are counted by $\ind(\psi)$. On the other hand, the rectangles with large transverse energy are controlled by $I_c(\psi)$. Combining these two cases, Proposition~\ref{prop:3.3} proves
\begin{equation*}
 \int_{\R}\#\{\text{negative cells on the slice }y\}\dd y
 \le C_J\bigl(I_c(\psi)+\ind(\psi)\bigr).
\end{equation*}
Now, we integrate \eqref{eq:2.73} with respect to $y$. By Lemma~\ref{lem:3.1}, Proposition~\ref{prop:3.3}, and \eqref{eq:3.2}, we derive \eqref{eq:3.30}. Combining it with \eqref{eq:3.6}, we prove Theorem~\ref{thm:1.1}.

For Theorem~\ref{thm:1.2}, let $c_n\to c$ be the speeds obtained by Bellazzini and Ruiz. In view of Theorem~\ref{thm:1.1}, the corresponding solutions have uniformly bounded energy. By \cite[Proposition~6.1]{BellazziniRuiz} and Fatou's lemma, after translations we find a finite-energy local limit $u$ with $|u(0)|\ne1$. Every constant finite-energy solution has modulus one. Thus $u$ is nonconstant. It follows from \eqref{eq:3.32} that $\ind(u)\le1$.

In view of Theorem~\ref{thm:1.1}, assumption \eqref{eq:1.7} yields a uniform energy bound. By Lemma~\ref{lem:4.2}, we can only extract finitely many nonconstant profiles. We next use Lemma~\ref{lem:4.3} to show that no energy is left outside these profiles. This proves the splitting statements. By translating the negative tests of the profiles, we derive \eqref{eq:1.15}. For Theorem~\ref{thm:1.4}, we first exclude the case that the action tends to zero. Otherwise, we obtain a nonconstant finite-energy limit independent of $y$, which is impossible. Applying Theorem~\ref{thm:1.3} to minimizing sequences, we establish attainment and precompactness. Applying the same argument when $c_n\to c$, we infer lower semicontinuity.

The rest of this paper is organized as follows. In Section~2, we prove the one-dimensional estimate. In Section~3, we use this estimate to prove Theorems~\ref{thm:1.1} and~\ref{thm:1.2}. Finally, in Section~4, we prove Theorems~\ref{thm:1.3} and~\ref{thm:1.4}.

\section{The one-dimensional equation }

Let $\psi$ solve the two-dimensional equation \eqref{eq:1.2}. For a fixed number $y\in\R$, define the function of one variable
\begin{equation}\label{eq:2.1}
 q_y(x):=\psi(x,y).
\end{equation}
By substituting \eqref{eq:2.1} into \eqref{eq:1.2}, we derive the exact equation for $q_y$,
\begin{equation}\label{eq:2.2}
 q_y''+icq_y'+(1-|q_y|^2)q_y
 =-\partial_{yy}\psi(\,\cdot\,,y).
\end{equation}
We first study \eqref{eq:2.2} when its right-hand side is zero. We next prove an estimate that includes the squared $L^2$ norm of the right-hand side.

\subsection{The exact one-dimensional equation}

For a twice differentiable function $q:\R\to\C$, denote the left-hand side of \eqref{eq:2.2} by
\begin{equation}\label{eq:2.3}
 R_c(q)=q''+icq'+(1-|q|^2)q,
\end{equation}
and define the two real-valued functions
\begin{equation}\label{eq:2.4}
 F_q=|q'|^2+\frac12(1-|q|^2)^2,
 \qquad
 D_q=|q'|^2-\frac12(1-|q|^2)^2.
\end{equation}
Here $F_q\ge0$, while $D_q$ may change sign. It follows from equations \eqref{eq:2.2} and \eqref{eq:2.3} that
\begin{equation}\label{eq:2.5}
 R_c(q_y)(x)=-\partial_{yy}\psi(x,y).
\end{equation}
On the real Hilbert space $H^1(\R,\C)$, the one-dimensional quadratic form along $q$ is
\begin{equation}\label{eq:2.6}
 Q_{q,c}(\phi)=\int_\R\left(
 |\phi'|^2-c\langle\phi,i\phi'\rangle
 -(1-|q|^2)|\phi|^2+2\langle\phi,q\rangle^2
 \right)\dd x.
\end{equation}
When $I\subset\R$ is an interval, let $H_0^1(I,\C)$ denote the Sobolev space with zero trace at the endpoints of $I$. All complex Sobolev spaces are regarded as real spaces, and $Q_{q,c}^{I}$ denotes the same integral as \eqref{eq:2.6} over $I$, with form domain $H_0^1(I,\C)$.

We use the change of variables
\begin{equation*}
 \Phi(x)=\e^{icx/2}q(x),
 \qquad a=1+\frac{c^2}{4},
\end{equation*}
which has derivatives
\begin{equation*}
 \Phi'=\e^{icx/2}\left(q'+\frac{ic}{2}q\right),\qquad
 \Phi''=\e^{icx/2}\left(q''+icq'-\frac{c^2}{4}q\right).
\end{equation*}
Consequently,
\begin{equation*}
 \Phi''+(a-|\Phi|^2)\Phi
 =\e^{icx/2}R_c(q).
\end{equation*}
Thus the exact equation $R_c(q)=0$ becomes
\begin{equation}\label{eq:2.7}
 \Phi''+(a-|\Phi|^2)\Phi=0.
\end{equation}
If $\zeta=\e^{icx/2}\phi$, we derive by direct substitution in \eqref{eq:2.6}
\begin{equation}\label{eq:2.8}
 Q_{q,c}(\e^{-icx/2}\zeta)
 =\int_\R\left(|\zeta'|^2-(a-|\Phi|^2)|\zeta|^2
 +2\langle\Phi,\zeta\rangle^2\right)\dd x.
\end{equation}

\begin{lemma}\label{lem:2.1}
Let $I_1\subset I_2$ be distinct bounded open intervals. Here $\overline I_2$ denotes the closure of $I_2$, $C(\overline I_2,\R)$ denotes the space of continuous real-valued functions on $\overline I_2$, and $H_0^1(I)$ denotes the real Sobolev space of functions with zero trace at the endpoints of $I$. For $V\in C(\overline I_2,\R)$, define
\begin{equation}\label{eq:2.9}
 \lambda_1(V,I):=
 \inf_{0\ne u\in H_0^1(I)}
 \frac{\displaystyle\int_I(|u'|^2+Vu^2)\dd x}
      {\displaystyle\int_Iu^2\dd x}.
\end{equation}
If $u_1\in H_0^1(I_1)$ is strictly positive in $I_1$ and
\begin{equation}\label{eq:2.10}
 -u_1''+Vu_1=0\quad\hbox{in }I_1,
\end{equation}
then
\begin{equation*}
 \lambda_1(V,I_1)=0,
 \qquad \lambda_1(V,I_2)<0.
\end{equation*}
\end{lemma}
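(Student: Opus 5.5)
We first show that $\lambda_1(V,I_1)=0$ and then that $\lambda_1(V,I_2)<0$. Throughout, $V$ is continuous on $\overline I_2$ and the intervals are bounded. Standard Sturm--Liouville theory then says the infimum in \eqref{eq:2.9} is attained. The minimizer can be taken nonnegative, since $|u|$ has the same quotient. It solves $-u''+Vu=\lambda_1 u$ and is strictly positive in the interior by the strong maximum principle, or by ODE uniqueness at an interior zero with $u'=0$.

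\emph{Step 1: $\lambda_1(V,I_1)\le 0$.} Test \eqref{eq:2.9} with $u_1$. Multiply \eqref{eq:2.10} by $u_1$ and integrate by parts over $I_1$. Since $u_1\in H_0^1(I_1)$ and $u_1$ is $C^2$ inside, being an ODE solution with continuous coefficient, this gives $\int_{I_1}(|u_1'|^2+Vu_1^2)=0$, so the quotient is zero.

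\emph{Step 2: $\lambda_1(V,I_1)\ge 0$.} Let $w>0$ be the first eigenfunction on $I_1$ with eigenvalue $\lambda=\lambda_1(V,I_1)$. Write $I_1=(\alpha,\beta)$. Multiply $-u_1''+Vu_1=0$ by $w$ and $-w''+Vw=\lambda w$ by $u_1$, subtract, and integrate. This gives
\[
\lambda\int_{I_1}u_1w\,\dd x=\bigl[u_1'w-u_1w'\bigr]_\alpha^\beta=0,
\]
because both functions vanish at the endpoints and are $C^1$ up to the boundary. Since $\int u_1w>0$, we get $\lambda=0$. This integration by parts is the only delicate point: we need $C^1$ regularity up to $\partial I_1$, which follows from the ODE with $V$ continuous on the closed interval. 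Alternatively, one can use Picone's identity, or argue via simplicity of the eigenvalue, which forces $u_1$ to be a multiple of $w$.

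\emph{Step 3: $\lambda_1(V,I_2)<0$.} Extend $u_1$ by zero to $\tilde u\in H_0^1(I_2)$. Its quotient is still $0$, so $\lambda_1(V,I_2)\le 0$. Suppose equality held. Then $\tilde u$ would be a minimizer, hence a weak solution of $-u''+Vu=0$ on $I_2$, and so $C^1$ on $I_2$.

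Since $I_1\ne I_2$, one endpoint of $I_1$, say $\beta$, lies in the interior of $I_2$. At that point $\tilde u(\beta)=0$, and $\tilde u'(\beta)=0$ from the zero side. ODE uniqueness then gives $\tilde u\equiv0$, a contradiction.

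To make this rigorous, I would check the Euler--Lagrange equation through the first variation of the quotient. This is routine because $\tilde u$ minimizes a smooth functional on $H_0^1(I_2)$. Equivalently, one can note that Hopf's lemma gives $u_1'(\beta)<0$, so $\tilde u'$ jumps at $\beta$, which is impossible for a weak solution. I expect this final contradiction step to be the main, though mild, obstacle.
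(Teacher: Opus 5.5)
Your proof is correct. Steps~1 and~3 follow the paper's argument. The paper also obtains $\lambda_1(V,I_1)\le0$ by testing with $u_1$. It also proves $\lambda_1(V,I_2)<0$ by extending $u_1$ by zero, reading off the Euler--Lagrange equation from the first variation, and applying Cauchy uniqueness. The only cosmetic difference there is that the paper applies uniqueness at a point of the open set $I_2\setminus\overline I_1$ rather than at the endpoint $\beta$.

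The genuine difference is in Step~2. The paper never introduces a first eigenfunction. Instead it proves the ground-state (Picone-type) identity
\[
\int_{I_1}(|v'|^2+Vv^2)\dd x=\int_{I_1}u_1^2\left|\left(\frac v{u_1}\right)'\right|^2\dd x\qquad(v\in C_c^\infty(I_1)),
\]
which uses only $u_1>0$ and the equation in the interior. It then extends nonnegativity to $H_0^1(I_1)$ by density. That route needs neither the existence of a minimizer (compact embedding, lower semicontinuity) nor positivity of the ground state, and it needs no regularity of $u_1$ or $w$ up to $\partial I_1$.

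Your route replaces the identity with standard spectral facts and a Wronskian computation. It is equally valid, and even in your version the boundary regularity can be dropped. Test the weak form of $u_1$'s equation with $w\in H_0^1(I_1)$, and test $w$'s Euler--Lagrange equation with $u_1\in H_0^1(I_1)$; both are justified by density of $C_c^\infty(I_1)$ and boundedness of $V$. Subtracting gives $\lambda\int_{I_1}u_1w\dd x=0$ directly. The same density remark justifies Step~1 without any control at the endpoints.
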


\begin{proof}
For $v\in C_c^\infty(I_1)$, equation \eqref{eq:2.10} and integration by parts yield the ground-state identity
\begin{equation}\label{eq:2.11}
 \int_{I_1}(|v'|^2+Vv^2)\dd x
 =\int_{I_1}u_1^2\left|\left(\frac v{u_1}\right)'\right|^2\dd x
 \ge0.
\end{equation}
By density, the nonnegativity $\int_{I_1}(|v'|^2+Vv^2)\dd x\ge0$ in \eqref{eq:2.11} holds for every $v\in H_0^1(I_1)$. Testing with $u_1$ in \eqref{eq:2.10}, we derive
\begin{equation}\label{eq:2.12}
 \int_{I_1}(|u_1'|^2+Vu_1^2)\dd x=0.
\end{equation}
Equations \eqref{eq:2.9}, \eqref{eq:2.11}, and \eqref{eq:2.12} imply $\lambda_1(V,I_1)=0$.

Let $\widehat u_1$ be the extension of $u_1$ by zero from $I_1$ to $I_2$. Then
\begin{equation*}
 \widehat u_1\in H_0^1(I_2),
 \qquad
 \int_{I_2}(|\widehat u_1'|^2+V\widehat u_1^2)\dd x=0.
\end{equation*}
By \eqref{eq:2.9}, $\lambda_1(V,I_2)\le0$. We aim to prove $\lambda_1(V,I_2)<0$. Suppose on the contrary that $\lambda_1(V,I_2)=0$. Then $\widehat u_1$ would attain the quotient in \eqref{eq:2.9}. The first variation of that quotient at $\widehat u_1$ would therefore lead to
\begin{equation}\label{eq:2.13}
 \int_{I_2}(\widehat u_1'v'+V\widehat u_1v)\dd x=0
 \qquad(v\in H_0^1(I_2)).
\end{equation}
Since $V$ is continuous, we infer from \eqref{eq:2.13} that $\widehat u_1\in C^2(I_2)$ and
\begin{equation*}
 -\widehat u_1''+V\widehat u_1=0\qquad\text{on }I_2.
\end{equation*}
Note that $\widehat u_1$ vanishes on the nonempty open set $I_2\setminus\overline I_1$. At any point of this open set, $\widehat u_1=\widehat u_1'=0$. Uniqueness for the scalar linear Cauchy problem gives $\widehat u_1\equiv0$, contradicting $u_1>0$. Therefore $\lambda_1(V,I_2)<0$.
\end{proof}

The next proposition proves a rigidity result for stable solutions of the exact one-dimensional equation \eqref{eq:2.7}. To be precise, a solution $q$ is \emph{Morse-stable} if $Q_{q,c}\geq 0$ in the sense
\begin{equation*}
 Q_{q,c}(\phi)\ge0
 \qquad\text{for every }\phi\in C_c^\infty(\R,\C).
\end{equation*}

Ortega \cite[Appendix, Proposition~8.1]{BellazziniRuiz} proved the instability of the constant-modulus solutions below the threshold in \eqref{eq:2.14}, and we prove the converse including the equality case and classify all bounded entire Morse-stable solutions.

\begin{proposition}
\label{prop:2.2}
Let $c\in(0,\sqrt2)$ and let $q$ be a bounded solution of $R_c(q)=0$ on $\R$. Then
\begin{equation*}
 Q_{q,c}(\phi)\ge0
 \qquad\text{for every }\phi\in C_c^\infty(\R,\C)
\end{equation*}
if and only if there are constants $\rho>0$ and $\omega,\gamma\in\R$ such that
\begin{equation}\label{eq:2.14}
 \begin{gathered}
 q(x)=\rho\e^{i(\omega x+\gamma)},\\
 \omega^2+c\omega+\rho^2=1,
 \qquad
 \rho^2\ge\frac23\left(1+\frac{c^2}{4}\right).
 \end{gathered}
\end{equation}
\end{proposition}
\begin{proof}
Let $\Phi=\e^{icx/2}q$ and $a=1+c^2/4$, as above. For every solution of \eqref{eq:2.7}, there are two conserved quantities
\begin{equation*}
 m=\Ima(\overline\Phi\Phi'),
 \qquad
 H=\frac12|\Phi'|^2+\frac a2|\Phi|^2-\frac14|\Phi|^4.
\end{equation*}
Using the equation and direct computation, we obtain
\begin{equation*}
 m'=\Ima(\overline\Phi\Phi'')=0,
 \qquad
 H'=\Rea\!\left(
 \overline{\Phi'}[\Phi''+(a-|\Phi|^2)\Phi]\right)=0.
\end{equation*}

We divide the following proof into two cases according to the value of $m$.

\noindent $Case (i)$. Suppose first that $m\ne0$. Then $\Phi$ never vanishes, so we may write
\begin{equation*}
 \Phi=r\e^{i\vartheta},
 \qquad r=|\Phi|>0,
\end{equation*}
where $\vartheta$ is a real-valued phase. The real and imaginary parts of \eqref{eq:2.7} yield
\begin{equation*}
 r^2\vartheta'=m,
 \qquad
 r''=\frac{m^2}{r^3}-ar+r^3.
\end{equation*}
It follows that
\begin{equation*}
 \frac12(r')^2+U_m(r)=H,
 \qquad
 U_m(r)=\frac{m^2}{2r^2}+\frac a2r^2-\frac14r^4.
\end{equation*}

Writing $u=r^2$, we have
\begin{equation*}
 U_m'(r)
 =-\frac{m^2}{r^3}+ar-r^3
 =\frac{r^4(a-r^2)-m^2}{r^3},
 \qquad U_m'(r)=0
 \quad\Longleftrightarrow\quad
 u^2(a-u)=m^2.
\end{equation*}
Since $ \frac{\dd}{\dd u}\bigl[u^2(a-u)\bigr]=u(2a-3u)$, $u^2(a-u)$ increases up to $u=2a/3$, decreases afterwards, and has maximum $4a^3/27$. If $m^2\ge4a^3/27$, then $U_m'\le0$. Since $r''=-U_m'(r)$, boundedness of $\Phi$ on $\R$ forces $r$ to be constant. Assume now $m^2<4a^3/27$, and denote the two critical points satisfying $U_m'=0$ by $r_-<r_+$. The preceding calculation implies
\begin{equation*}
 0<r_-^2<\frac{2a}{3}<r_+^2<a.
\end{equation*}
At either critical point,
\begin{equation*}
 U_m''(r)
 =\frac{3m^2}{r^4}+a-3r^2
 =2(2a-3r^2).
\end{equation*}
Hence $r_-$ is a strict local minimum and $r_+$ is a strict local maximum. Moreover, $U_m$ is strictly decreasing on $(0,r_-)$, strictly increasing on $(r_-,r_+)$, and strictly decreasing on $(r_+,\infty)$. Also,
\begin{equation*}
 \lim_{r\to0^+}U_m(r)=+\infty,
 \qquad
 \lim_{r\to\infty}U_m(r)=-\infty.
\end{equation*}

We now determine the possible solutions directly from
\begin{equation*}
 (r')^2=2\{H-U_m(r)\}.
\end{equation*}
A solution can take only values for which $U_m(r)\le H$, and it can change direction only at a root of $U_m(r)=H$.

Suppose first that
\begin{equation*}
 U_m(r_-)<H<U_m(r_+).
\end{equation*}
Then $U_m(r)=H$ has three simple roots. Denote the first two by $s_1$ and $s_2$. Their positions are
\begin{equation*}
 0<s_1<r_-<s_2<r_+,
\end{equation*}
while the third root lies to the right of $r_+$. The interval $[s_1,s_2]$ is therefore the only bounded connected component of $\{r>0:U_m(r)\le H\}$. Separating variables, we find
\begin{equation*}
 x-x_0
 =\pm\int_{r(x_0)}^{r(x)}
 \frac{\dd s}{\sqrt{2\{H-U_m(s)\}}}.
\end{equation*}
Since $s_1$ and $s_2$ are simple roots, there is $C>1$ such that
\begin{equation*}
 C^{-1}|s-s_j|\le H-U_m(s)\le C|s-s_j|
\end{equation*}
whenever $s\in[s_1,s_2]$ is sufficiently close to $s_j$, where $j=1,2$. Thus the integral defining $x-x_0$ is finite at both endpoints. The function $r(x)$ reaches $s_1$ and $s_2$ at finite values of $x$, and $r'$ changes sign at each endpoint. Repeating this motion between $s_1$ and $s_2$ makes $r$ periodic, with
\begin{equation*}
 s_1\le r(x)\le s_2.
\end{equation*}

Next suppose that
\begin{equation*}
 H=U_m(r_+).
\end{equation*}
Besides the double root $r_+$, the equation $U_m(r)=H$ has one simple root $s_*$ with
\begin{equation*}
 0<s_*<r_-.
\end{equation*}
By Taylor's formula at $r_+$, we obtain
\begin{equation*}
 H-U_m(r)
 =-\frac12U_m''(r_+)(r-r_+)^2
   +O(|r-r_+|^3),
 \qquad U_m''(r_+)<0.
\end{equation*}
Consequently, for some $C>1$,
\begin{equation*}
 C^{-1}(r_+-r)^2\le H-U_m(r)\le C(r_+-r)^2
\end{equation*}
for $r<r_+$ sufficiently close to $r_+$. It follows that
\begin{equation*}
 \int^{r_+}\frac{\dd s}
 {\sqrt{2\{H-U_m(s)\}}}=+\infty.
\end{equation*}
After translating $x$, the corresponding nonconstant bounded solution satisfies
\begin{equation*}
 r(0)=s_*,
 \qquad r'(0)=0,
 \qquad r'(x)<0\quad(x<0),
 \qquad r'(x)>0\quad(x>0),
\end{equation*}
and
\begin{equation*}
 \lim_{x\to-\infty}r(x)
 =\lim_{x\to+\infty}r(x)=r_+.
\end{equation*}

The two critical points also yield the constant solutions
\begin{equation*}
 r\equiv r_-,
 \qquad
 r\equiv r_+.
\end{equation*}
For every remaining value of $H$, or on the branch lying to the right of the largest root of $U_m(r)=H$, the admissible set $\{r>0:U_m(r)\le H\}$ extends to infinity and contains no further turning point. The identity for $(r')^2$ then forces $r$ to be unbounded in at least one direction. Thus every bounded positive solution $r$ is constant, is one of the periodic solutions above, or tends to $r_+$ at both ends.

We now construct a negative test when $r$ is nonconstant and either periodic or tends to $r_+$ at both ends. For real $\alpha,\beta\in C_c^\infty(\R)$, put
\begin{equation*}
 \zeta=\e^{i\vartheta}(\alpha+ir\beta).
\end{equation*}
Then
\begin{equation*}
 \zeta'=\e^{i\vartheta}
 \bigl[(\alpha'-r\vartheta'\beta)
 +i(r'\beta+r\beta'+\vartheta'\alpha)\bigr].
\end{equation*}
Substituting in \eqref{eq:2.8} and integrating by parts, we derive
\begin{align*}
 Q_{q,c}(\e^{-icx/2}\zeta)
 =\int_\R\bigl\{
 &|\alpha'|^2+r^2|\beta'|^2+4r\vartheta'\alpha\beta'
 +(3r^2-a+(\vartheta')^2)\alpha^2\\
 &+2(2r'\vartheta'+r\vartheta'')\alpha\beta
 +(r^2(\vartheta')^2-rr''-ar^2+r^4)\beta^2
 \bigr\}\dd x.
\end{align*}
The two equations for $r$ and $\vartheta$ make the coefficients of $\alpha\beta$ and $\beta^2$ equal to zero. Completing the one remaining square,
\begin{equation*}
 r^2|\beta'|^2+4r\vartheta'\alpha\beta'
 =r^2\left(\beta'+\frac{2\vartheta'}r\alpha\right)^2
 -4(\vartheta')^2\alpha^2,
\end{equation*}
therefore yields the exact formula
\begin{equation*}
 Q_{q,c}\!\left(
 \e^{-icx/2+i\vartheta}(\alpha+ir\beta)\right)
 =\int_\R\left\{
 r^2\left(\beta'+\frac{2\vartheta'}r\alpha\right)^2
 +|\alpha'|^2+\bigl(3r^2-a-3(\vartheta')^2\bigr)\alpha^2
 \right\}\dd x.
\end{equation*}

Set
\begin{equation*}
 L=-\frac{\dd^2}{\dd x^2}+3r^2-a-3(\vartheta')^2.
\end{equation*}
Differentiating the equation for $r$ and using $\vartheta'=m/r^2$, we obtain $Lr'=0$. If $r$ is periodic, choose three consecutive zeros $t_0<t_1<t_2$ of $r'$. One of $r'$ and $-r'$ is positive on $(t_0,t_1)$. Lemma~\ref{lem:2.1}, applied first on $(t_0,t_1)$ and then on $(t_0,t_2)$, implies
\begin{equation*}
 \inf_{0\ne\alpha\in H_0^1(t_0,t_2)}
 \frac{\displaystyle\int_{t_0}^{t_2}
 \bigl(|\alpha'|^2+(3r^2-a-3(\vartheta')^2)\alpha^2\bigr)\dd x}
 {\displaystyle\int_{t_0}^{t_2}\alpha^2\dd x}<0.
\end{equation*}

For the nonconstant solution tending to $r_+$ at both ends, write $r_*=r_+$. Here $r_*^2>2a/3$. Since $U_m''(r_*)<0$, Taylor's formula and the identity $\frac12(r')^2+U_m(r)=H$ imply that, for some $C,\mu>0$,
\begin{equation*}
 |r(x)-r_*|+|r'(x)|+|r''(x)|\le C\e^{-\mu|x|}
 \qquad (|x|\text{ sufficiently large}).
\end{equation*}
Hence $r'\in H^1(\R)$, $r'$ has exactly one zero, and
\begin{equation*}
 3r^2-a-3(\vartheta')^2
 \longrightarrow 2(3r_*^2-2a)>0.
\end{equation*}
We regard $L$ as the self-adjoint operator on $L^2(\R)$ with domain $H^2(\R)$. Its potential converges exponentially to $2(3r_*^2-2a)>0$. The essential-spectrum theorem for one-dimensional Schrödinger operators \cite[Sections~6.4 and~9.7]{TeschlSchrodinger} implies
\begin{equation*}
 \sigma_{\mathrm{ess}}(L)
 =[2(3r_*^2-2a),\infty).
\end{equation*}
From the equation for $r$ and the preceding exponential estimate, we also infer that $r'\in H^2(\R)$. Thus $Lr'=0$ and zero is a discrete eigenvalue below the essential spectrum. Since $r'$ has exactly one zero, the one-dimensional Sturm nodal theorem \cite[Theorem~9.40]{TeschlSchrodinger} shows that zero is the second discrete eigenvalue of $L$. Hence $L$ has an eigenvalue $\lambda_-<0$. Approximating its eigenfunction by a smooth compactly supported function yields the same conclusion as in the periodic case. Thus, in either case, there are $\alpha\in C_c^\infty(\R,\R)$ and $\nu>0$ such that
\begin{equation*}
 \int_\R\bigl\{|\alpha'|^2+
 (3r^2-a-3(\vartheta')^2)\alpha^2\bigr\}\dd x=-\nu.
\end{equation*}

It remains to choose $\beta$ so that $\int_\R r^2(\beta'+2\vartheta'\alpha/r)^2\dd x<\nu$. Choose $b$ to the right of the support of $\alpha$, let
\begin{equation*}
 A=\int_\R\frac{2\vartheta'}r\alpha\dd x,
\end{equation*}
and take $\eta\in C^\infty(\R,[0,1])$ with $\eta=1$ on $(-\infty,0]$ and $\eta=0$ on $[1,\infty)$. Define
\begin{equation*}
 \beta_R(x)=
 \begin{cases}
 -\displaystyle\int_{-\infty}^{x}
  \dfrac{2\vartheta'(s)}{r(s)}\alpha(s)\dd s,&x\le b,\\[6pt]
 -A\eta\!\left(\dfrac{x-b}{R}\right),&x>b.
 \end{cases}
\end{equation*}
Then $\beta_R\in C_c^\infty(\R)$ and
\begin{equation*}
 \int_\R r^2\left(\beta_R'
 +\frac{2\vartheta'}r\alpha\right)^2\dd x
 \le \frac{C}{R}.
\end{equation*}
For large $R$, the constructed test therefore satisfies
\begin{equation*}
 Q_{q,c}\!\left(\e^{-icx/2+i\vartheta}(\alpha+ir\beta_R)\right)
 \le-\nu+C/R<0.
\end{equation*}

\bigskip

\noindent $Case (ii)$. We now turn to $m=0$. On every interval on which $\Phi\ne0$, its phase is constant because
\begin{equation*}
 \frac{\dd}{\dd x}\arg\Phi
 =\frac{\Ima(\overline\Phi\Phi')}{|\Phi|^2}=0.
\end{equation*}
If $\Phi$ never vanishes, then $\arg\Phi$ is constant on $\R$, and a constant rotation makes $\Phi$ real on $\R$. Suppose now that $\Phi(x_0)=0$ for some point $x_0$. If $\Phi'(x_0)=0$, we conclude from uniqueness for \eqref{eq:2.7} that $\Phi\equiv0$. Otherwise, multiply $\Phi$ by a constant of modulus one so that $\Phi'(x_0)$ is real. The rotated solution and its complex conjugate then have the same Cauchy data at $x_0$. Uniqueness for \eqref{eq:2.7} shows that the rotated solution is real on $\R$. It follows that, after a constant rotation, we may write
\begin{equation*}
 \Phi=f\in\R,
 \qquad
 f''+af-f^3=0,
 \qquad
 \frac12(f')^2+\frac a2f^2-\frac14f^4=H.
\end{equation*}
The function $ s\longmapsto \frac a2s^2-\frac14s^4$ has value $0$ at $s=0$, attains its maximum $a^2/4$ at $s=\pm\sqrt a$, and tends to $-\infty$ as $|s|\to\infty$. Together with
\begin{equation*}
 (f')^2=2\left(H-\frac a2f^2+\frac14f^4\right),
\end{equation*}
this determines all bounded solutions. They are $f\equiv0$, corresponding to $H=0$, the periodic solutions corresponding to $0<H<a^2/4$, and the constant solutions $f\equiv\pm\sqrt a$ and nonconstant solutions corresponding to $H=a^2/4$. In particular, if $H=a^2/4$ and $f$ is nonconstant, then
\begin{equation*}
 (f')^2=\frac12(a-f^2)^2,
\end{equation*}
and direct integration gives
\begin{equation*}
 f(x)=\pm\sqrt a\tanh\!\left(
 \sqrt{\frac a2}(x-t)\right),
 \qquad t\in\R.
\end{equation*}
It remains to exclude $H<0$ and $H>a^2/4$. If $H<0$, the equation
\begin{equation*}
 \frac a2s^2-\frac14s^4=H
\end{equation*}
has exactly two real roots $\pm b$, where
\begin{equation*}
 b^2=a+\sqrt{a^2-4H}>2a.
\end{equation*}
The energy identity implies that $|f|\ge b$. If $f\ge b$, then
\begin{equation*}
 f''=f(f^2-a)\ge b(b^2-a)>0,
\end{equation*}
which is incompatible with boundedness on $\R$. The case $f\le-b$ is identical.

If $H>a^2/4$, then
\begin{equation*}
 |f'|^2
 =2\left(H-\frac a2f^2+\frac14f^4\right)
 \ge2\left(H-\frac{a^2}{4}\right)>0.
\end{equation*}
Thus $f'$ has constant sign and is bounded away from zero, so $f$ cannot be bounded on $\R$.

We now prove that $f\equiv0$ and every nonconstant bounded solution listed above admit a compactly supported variation on which $Q_{q,c}$ is negative. Since $f$ is real, the real and imaginary parts of a variation decouple. More precisely, for $\alpha,\beta\in C_c^\infty(\R,\R)$, we deduce from formula \eqref{eq:2.8} that
\begin{equation*}
 Q_{q,c}(\e^{-icx/2}(\alpha+i\beta))
 =\int_\R\bigl\{|\alpha'|^2+(3f^2-a)\alpha^2\bigr\}\dd x
 +\int_\R\bigl\{|\beta'|^2+(f^2-a)\beta^2\bigr\}\dd x.
\end{equation*}
For a nonconstant periodic $f$, let $t_0<t_1<t_2$ be three consecutive zeros of $f$. One of $f$ and $-f$ is positive on $(t_0,t_1)$, and
\begin{equation*}
 -f''+(f^2-a)f=0,
\end{equation*}
Lemma~\ref{lem:2.1}, with $(t_0,t_1)\subset(t_0,t_2)$, provides a compactly supported $\beta$ for which the second integral is negative. For the displayed hyperbolic-tangent solution,
\begin{equation*}
 f^2-a=-a\,\operatorname{sech}^2\!\left(
 \sqrt{\frac a2}(x-t)\right).
\end{equation*}
Taking $\beta$ equal to $1$ on $[-R,R]$ and cutting it off on $[-2R,2R]$ makes the second integral converge to
\begin{equation*}
 -a\int_\R\operatorname{sech}^2\!\left(
 \sqrt{\frac a2}(x-t)\right)\dd x<0.
\end{equation*}
The same argument by cutoff yields a negative value for $f\equiv0$. Therefore the only cases not yet ruled out are the nonzero constant-modulus solutions.

Every such solution can be written
\begin{equation*}
 q(x)=\rho\e^{i(\omega x+\gamma)},
 \qquad \rho>0,\quad \omega,\gamma\in\R.
\end{equation*}
Substituting in $R_c(q)=0$, we obtain
\begin{equation*}
 \omega^2+c\omega+\rho^2=1,
 \qquad
 \rho^2+\left(\omega+\frac c2\right)^2=a.
\end{equation*}
Substituting $r=\rho$ and $\vartheta'=\omega+c/2$ in the completed-square formula for $Q_{q,c}$, we obtain
\begin{align*}
 &Q_{q,c}\!\left(
 \e^{i(\omega x+\gamma)}(\alpha+i\rho\beta)\right)\\
 &\quad=\int_\R\left\{
 \rho^2\left(\beta'
 +\frac{2(\omega+c/2)}{\rho}\alpha\right)^2
 +|\alpha'|^2
 +2\left[\rho^2-2\left(\omega+\frac c2\right)^2\right]\alpha^2
 \right\}\dd x.
\end{align*}
If $\rho^2\ge2(\omega+c/2)^2$, every term on the right is nonnegative. If the reverse strict inequality holds, choose $\alpha_R\in C_c^\infty(\R)$ equal to $1$ on $[0,R]$, with transition regions of fixed length. Then
\begin{equation*}
 \int_\R\left\{|\alpha_R'|^2
 +2\left[\rho^2-2\left(\omega+\frac c2\right)^2\right]
 \alpha_R^2\right\}\dd x=-C_0R+O(1)
\end{equation*}
for some $C_0>0$. Apply the preceding construction of $\beta_R$, now returning it to zero over an interval of length $R^3$. Since $\int\alpha_R=O(R)$, the integral of $\rho^2(\beta_R'+2(\omega+c/2)\alpha_R/\rho)^2$ is only $O(R^{-1})$. Hence $Q_{q,c}<0$ for all sufficiently large $R$.

We have proved that the constant-modulus solution is stable exactly when
\begin{equation*}
 \rho^2\ge2\left(\omega+\frac c2\right)^2.
\end{equation*}
Together with $\rho^2+(\omega+c/2)^2=a$, this is equivalent to $\rho^2\ge2a/3$. Since $a=1+c^2/4$, these are precisely the conditions in \eqref{eq:2.14}.
\end{proof}

The explicit formula for stable solutions gives the following estimate.
\begin{lemma}\label{lem:2.3}
For every compact interval $J\subset(0,\sqrt2)$ there is $\delta_J>0$ such that, if $c\in J$ and $q$ satisfies \eqref{eq:2.14}, then
\begin{equation}\label{eq:2.15}
 D_q\ge\delta_JF_q.
\end{equation}
\end{lemma}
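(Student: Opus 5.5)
For $q=\rho\e^{i(\omega x+\gamma)}$ both $F_q$ and $D_q$ are constants, namely
\begin{equation*}
 F_q=\rho^2\omega^2+\tfrac12(1-\rho^2)^2,
 \qquad
 D_q=\rho^2\omega^2-\tfrac12(1-\rho^2)^2 .
\end{equation*}
So the claim is a pointwise inequality between two explicit functions of $(c,\omega,\rho)$ on the set described by \eqref{eq:2.14}. The plan is to eliminate $\rho$, factor $D_q$, and show that $D_q/F_q$ is bounded below by a positive constant, uniformly on the resulting set.

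\textbf{Step 1: eliminate $\rho$.} The constraint $\omega^2+c\omega+\rho^2=1$ gives $1-\rho^2=\omega(\omega+c)$. Hence
\begin{equation*}
 D_q=\omega^2\Bigl(\rho^2-\tfrac12(\omega+c)^2\Bigr),
 \qquad
 F_q=\omega^2\Bigl(\rho^2+\tfrac12(\omega+c)^2\Bigr).
\end{equation*}
If $\omega=0$, then $\rho=1$ and $F_q=D_q=0$, so \eqref{eq:2.15} holds trivially. For $\omega\ne0$ it suffices to bound
\begin{equation*}
 \frac{D_q}{F_q}=\frac{2\rho^2-(\omega+c)^2}{2\rho^2+(\omega+c)^2}
\end{equation*}
from below by some $\delta_J>0$.

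\textbf{Step 2: use the stability threshold.} As noted at the end of the proof of Proposition~\ref{prop:2.2}, $\rho^2\ge 2a/3$ is equivalent to $\rho^2\ge 2(\omega+c/2)^2$, where $a=1+c^2/4$. Set $s=\omega+c/2$, so that $\rho^2+s^2=a$ and $s^2\le a/3$. Then
\begin{equation*}
 \omega+c=s+\tfrac c2,
 \qquad
 2\rho^2-(\omega+c)^2=2a-2s^2-\bigl(s+\tfrac c2\bigr)^2 .
\end{equation*}
This numerator is a concave quadratic in $s$, so on the interval $|s|\le\sqrt{a/3}$ its minimum is attained at an endpoint. The worse endpoint is $s=\sqrt{a/3}$, where the numerator equals
\begin{equation*}
 \tfrac{4a}{3}-\Bigl(\sqrt{a/3}+\tfrac c2\Bigr)^2 .
\end{equation*}

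\textbf{Step 3: positivity and uniformity.} This step is where subsonicity enters and is the main obstacle. The endpoint value is positive if and only if $\sqrt{a/3}+c/2<2\sqrt{a/3}$, that is, $c^2/4<a/3=(1+c^2/4)/3$. This is equivalent to $c^2/2<1$, i.e.\ $c<\sqrt2$. Hence the numerator is bounded below by
\begin{equation*}
 \mu(c):=\tfrac{4a}{3}-\Bigl(\sqrt{a/3}+\tfrac c2\Bigr)^2>0 ,
\end{equation*}
which is continuous in $c$ and therefore bounded below by a positive constant on the compact interval $J$. The denominator is bounded above by $2a+(\sqrt a+\sqrt2)^2$, uniformly for $c\in J$. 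Taking $\delta_J=\min_{c\in J}\mu(c)\big/\max_{c\in J}\bigl(2a+(\sqrt a+\sqrt2)^2\bigr)$ proves \eqref{eq:2.15}.

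The only delicate point is verifying the endpoint inequality: the margin $\mu(c)$ degenerates exactly as $c\to\sqrt2$, which is why $J$ must be a compact subinterval of $(0,\sqrt2)$.
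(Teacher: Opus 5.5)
Your proof is correct and follows the paper's argument. You eliminate $\rho$ via $1-\rho^2=\omega(\omega+c)$, write the numerator as a concave quadratic in $s=\omega+c/2$ on $|s|\le\sqrt{a/3}$, and reduce to the endpoint value $\mu(c)=1-c\sqrt{a/3}>0$, which is twice the paper's $\tfrac12(1-c\sqrt{a/3})$. The only cosmetic difference is that you get $\delta_J$ explicitly as a uniform numerator lower bound divided by a uniform denominator upper bound, whereas the paper appeals to continuity of the quotient on a compact parameter set.
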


\begin{proof}
It follows from the algebraic relation in \eqref{eq:2.14} that
\begin{equation*}
 1-\rho^2=\omega(\omega+c),
 \qquad
 \rho^2=a-\left(\omega+\frac c2\right)^2.
\end{equation*}
If $\omega\ne0$, substituting in \eqref{eq:2.4}, we obtain
\begin{equation*}
 \frac{D_q}{F_q}
 =\frac{\rho^2-\frac12(\omega+c)^2}
 {\rho^2+\frac12(\omega+c)^2}
 =\frac{a-(\omega+c/2)^2-\frac12(\omega+c)^2}
 {a-(\omega+c/2)^2+\frac12(\omega+c)^2}.
\end{equation*}
The last condition in \eqref{eq:2.14} is equivalent to
\begin{equation*}
 \left|\omega+\frac c2\right|\le\sqrt{\frac a3}.
\end{equation*}
On this interval, the numerator in the last quotient is a concave quadratic function of $\omega+c/2$. Its minimum is
\begin{equation*}
 \frac12\left(1-c\sqrt{\frac a3}\right)>0
 \qquad(c<\sqrt2).
\end{equation*}
The rational expression on the right extends continuously to $\omega=0$, where $\rho=1$ and its value is $(2-c^2)/(2+c^2)>0$. The displayed bound on $|\omega+c/2|$ makes the admissible pairs $(c,\omega)$ with $c\in J$ a compact set. The quotient therefore has a positive lower bound $\delta_J$ there. This proves \eqref{eq:2.15}. When $(\rho,\omega)=(1,0)$, both sides of \eqref{eq:2.15} are zero.
\end{proof}

Proposition~\ref{prop:2.2} and Lemma~\ref{lem:2.3} concern exact Morse-stable solutions on the whole line. We next prove an estimate near a constant of modulus one. A constant phase rotation reduces that constant to $1$. Write a perturbation as $q=1+A+iB$, where $A,B:\R\to\R$. The terms that are linear in $(A,B)$ in $R_c(q)=0$ yield
\begin{equation}\label{eq:2.16}
 A''-cB'-2A=0,
 \qquad B''+cA'=0.
\end{equation}
Define the positive subsonic exponent
\begin{equation*}
 \lambda_c:=\sqrt{2-c^2}>0.
\end{equation*}
For an exponential mode $(A,B)=(\xi,\eta)\e^{\lambda x}$, system \eqref{eq:2.16} becomes the following equation for the column vector with entries $\xi,\eta$:
\begin{equation}\label{eq:2.17}
 \begin{pmatrix}
  \lambda^2-2&-c\lambda\\
  c\lambda&\lambda^2
 \end{pmatrix}
 \binom{\xi}{\eta}=0.
\end{equation}
Writing $\det$ for the determinant, the characteristic polynomial is
\begin{equation*}
 \det\begin{pmatrix}
  \lambda^2-2&-c\lambda\\
  c\lambda&\lambda^2
 \end{pmatrix}
 =\lambda^2(\lambda^2-\lambda_c^2).
\end{equation*}
The double root $\lambda=0$ corresponds to the constant phase mode $(A,B)=(0,\gamma)$ and the affine mode $(A,B)=(-c\beta/2,\beta x)$. At $\lambda=\lambda_c$ and $\lambda=-\lambda_c$, \eqref{eq:2.17} implies $\eta=-c\xi/\lambda_c$ and $\eta=c\xi/\lambda_c$, respectively. Thus every real solution of \eqref{eq:2.16} has the form
\begin{align}
 A(x)&=-\frac c2\beta+a_+\e^{\lambda_cx}
 +a_-\e^{-\lambda_cx},\label{eq:2.18}\\
 B(x)&=\gamma+\beta x
 -\frac c{\lambda_c}a_+\e^{\lambda_cx}
 +\frac c{\lambda_c}a_-\e^{-\lambda_cx}.
 \label{eq:2.19}
\end{align}
Here $\beta,\gamma,a_+,a_-\in\R$. Define
\begin{equation}\label{eq:2.20}
 F_0=|A'|^2+|B'|^2+2A^2,
 \qquad D_0=|A'|^2+|B'|^2-2A^2.
\end{equation}
For the constant phase mode $(A,B)=(0,\gamma)$, both densities in \eqref{eq:2.20} vanish. For the affine mode $(A,B)=(-c\beta/2,\beta x)$,
\begin{equation*}
 D_0=\left(1-\frac{c^2}{2}\right)\beta^2,
 \qquad
 F_0=\left(1+\frac{c^2}{2}\right)\beta^2.
\end{equation*}
For the mode proportional to $\e^{\lambda_cx}$ we have $A'=\lambda_cA$ and $B'=-cA$. For the mode proportional to $\e^{-\lambda_cx}$ we have $A'=-\lambda_cA$ and $B'=-cA$. Thus we conclude from \eqref{eq:2.20} that
\begin{equation*}
 D_0=(\lambda_c^2+c^2-2)A^2=0.
\end{equation*}
The affine mode of \eqref{eq:2.16} satisfies a uniform positive estimate relating $D_0$ to $F_0$. Each exponential mode forces the $F_0$-energy to grow toward one of the adjacent intervals. The following lemma establishes a uniform alternative.

\begin{lemma}\label{lem:2.4}
Let $J\subset(0,\sqrt2)$ be a compact interval and $K>1$. There are $L=L(J,K)$ and $\delta_0=\delta_0(J)>0$ with the following property. Let $\mathbb Z$ denote the set of integers. For $j\in\mathbb Z$, put $I_j=[jL,(j+1)L]$. Every solution of \eqref{eq:2.16} with $c\in J$ satisfies, on each $I_j$,
\begin{equation}\label{eq:2.21}
 \int_{I_j}D_0\ge2\delta_0
 \int_{I_j}F_0
\end{equation}
or
\begin{equation}\label{eq:2.22}
 \max\left\{
 \int_{I_{j-1}}F_0,
 \int_{I_{j+1}}F_0\right\}
 \ge2K\int_{I_j}F_0.
\end{equation}
\end{lemma}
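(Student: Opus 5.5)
The plan is to use the explicit four-parameter description \eqref{eq:2.18}--\eqref{eq:2.19} of all solutions of \eqref{eq:2.16} and compute both densities exactly in terms of the parameters. Write $u(x)=a_+\e^{\lambda_cx}$ and $v(x)=a_-\e^{-\lambda_cx}$. Then
\begin{equation*}
 A=-\tfrac c2\beta+u+v,\qquad
 A'=\lambda_c(u-v),\qquad
 B'=\beta-c(u+v).
\end{equation*}
The phase constant $\gamma$ drops out of both $F_0$ and $D_0$, so these densities are quadratic forms in $(\beta,u,v)$ at each point.

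\textbf{Step 1: exact formula for $D_0$.} Expanding $D_0=\lambda_c^2(u-v)^2+(\beta-c(u+v))^2-2A^2$ and using $\lambda_c^2=2-c^2$, the following happens:
\begin{itemize}
\item the $\beta(u+v)$ cross terms cancel;
\item the coefficients of $u^2$ and $v^2$ equal $\lambda_c^2+c^2-2=0$;
\item the coefficient of $uv$ is $-4\lambda_c^2$.
\end{itemize}
Hence
\begin{equation*}
 D_0=\Bigl(1-\frac{c^2}{2}\Bigr)\beta^2-4\lambda_c^2uv,
 \qquad uv\equiv a_+a_-\ \text{(constant)}.
\end{equation*}
This identity is consistent with the computations for the individual modes preceding the lemma.

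\textbf{Step 2: uniform pointwise equivalence.} I will show there is $C_J\ge1$ with
\begin{equation*}
 C_J^{-1}(\beta^2+u^2+v^2)\le F_0\le C_J(\beta^2+u^2+v^2)
 \qquad (c\in J).
\end{equation*}
The form $F_0=|A'|^2+|B'|^2+2A^2$ is positive semidefinite in $(\beta,u,v)$. It is definite for the following reason. If $A'=A=B'=0$, then $u=v$ and $\beta=2cu$, so $A=(2-c^2)u=0$, giving $u=v=\beta=0$. The coefficients depend continuously on $c$, and $J$ is compact, so the constant can be taken uniform in $c\in J$.

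\textbf{Step 3: integrate over the cells.} Set
\begin{equation*}
 P_j=\int_{I_j}u^2,\qquad M_j=\int_{I_j}v^2 .
\end{equation*}
Step 2 gives
\begin{equation*}
 \int_{I_j}F_0\asymp L\beta^2+P_j+M_j .
\end{equation*}
The growth of the exponentials gives exactly
\begin{equation*}
 P_{j+1}=\e^{2\lambda_cL}P_j,\qquad M_{j-1}=\e^{2\lambda_cL}M_j .
\end{equation*}
Since $|uv|\le\frac12(u^2+v^2)$ pointwise and $uv$ is constant, Step 1 yields
\begin{equation*}
 \int_{I_j}D_0\ge\Bigl(1-\frac{c^2}{2}\Bigr)L\beta^2-2\lambda_c^2(P_j+M_j).
\end{equation*}

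\textbf{Step 4: the alternative.} Fix $\varepsilon=\varepsilon(J)>0$ small.
\begin{itemize}
\item If $P_j+M_j\le\varepsilon L\beta^2$, then $\int_{I_j}D_0\ge\frac12(1-c^2/2)L\beta^2$. Also $\int_{I_j}F_0\le C_J(1+\varepsilon)L\beta^2$. This gives \eqref{eq:2.21} with some $\delta_0=\delta_0(J)$. Neither $\varepsilon$ nor $\delta_0$ depends on $L$ or $K$.
\item Otherwise $P_j+M_j>\varepsilon L\beta^2$. Say $P_j\ge M_j$; the other case is symmetric, using $I_{j-1}$. Then
\begin{equation*}
 \int_{I_j}F_0\le C_J(\varepsilon^{-1}+2)P_j,
 \qquad
 \int_{I_{j+1}}F_0\ge C_J^{-1}\e^{2\lambda_cL}P_j .
\end{equation*}
Choosing $L$ with $\e^{2\lambda_JL}\ge2KC_J^2(\varepsilon^{-1}+2)$, where $\lambda_J=\min_J\lambda_c>0$, gives \eqref{eq:2.22}.
\end{itemize}
The degenerate case where all parameters vanish on $I_j$ is trivial, since then both sides are zero.

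\textbf{Main obstacle.} The delicate point is the sign-indefinite term $-4\lambda_c^2a_+a_-$ in $D_0$. Steps 2 and 3 control it by showing it is dominated by the exponential energies $P_j+M_j$. The same quantities then drive the growth alternative, so the split by $\varepsilon$ separates the two cases uniformly in $c\in J$.
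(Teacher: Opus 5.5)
Your proposal is correct and follows essentially the same route as the paper: the explicit four-parameter mode decomposition, the observation that $D_0=(1-\tfrac{c^2}{2})\beta^2-4\lambda_c^2a_+a_-$ is constant in $x$, the uniform pointwise equivalence $F_0\asymp\beta^2+u^2+v^2$ on $J$, and exponential growth of the dominant mode into a neighbouring cell. The differences are organisational. The paper argues by contradiction using the sum of both neighbouring integrals, while you split directly on the size of $P_j+M_j$ relative to $L\beta^2$. There is also a harmless slip: your second case actually gives $\int_{I_j}F_0\le C_J(2\varepsilon^{-1}+2)P_j$ rather than $C_J(\varepsilon^{-1}+2)P_j$, which only changes the choice of $L$.
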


\begin{proof}
Translation reduces the proof to the central cell $I_0=[0,L]$. Put $t=x-L/2$. Equations \eqref{eq:2.18}--\eqref{eq:2.19} can then be written as
\begin{align*}
 A&=-\frac c2\beta+p_+\e^{\lambda_ct}
 +p_-\e^{-\lambda_ct},\\
 B&=\gamma_0+\beta t-\frac c{\lambda_c}p_+\e^{\lambda_ct}
 +\frac c{\lambda_c}p_-\e^{-\lambda_ct},
\end{align*}
where $\beta,\gamma_0,p_+,p_-\in\R$. Substituting in \eqref{eq:2.20} and using $\lambda_c^2=2-c^2$, we derive
\begin{align}
 F_0
 &=\lambda_c^2(p_+\e^{\lambda_ct}-p_-\e^{-\lambda_ct})^2
 +\left(1+\frac{c^2}{2}\right)\beta^2\notag\\
 &\quad-4c\beta(p_+\e^{\lambda_ct}+p_-\e^{-\lambda_ct})
 +(2+c^2)(p_+\e^{\lambda_ct}+p_-\e^{-\lambda_ct})^2,
 \notag\\
 D_0
 &=\lambda_c^2\left(\frac{\beta^2}{2}-4p_+p_-\right).
 \label{eq:2.23}
\end{align}
Thus $D_0$ is constant in $x$. The determinant of the quadratic part in $\beta$ and $p_+\e^{\lambda_ct}+p_-\e^{-\lambda_ct}$ is
\begin{equation*}
 \left(1+\frac{c^2}{2}\right)(2+c^2)-4c^2
 =\frac12(2-c^2)^2.
\end{equation*}
Since $J$ is compact in $(0,\sqrt2)$, there is $C_J\ge1$ such that, pointwise,
\begin{equation}\label{eq:2.24}
 C_J^{-1}\{\beta^2+p_+^2\e^{2\lambda_ct}
 +p_-^2\e^{-2\lambda_ct}\}
 \le F_0\le C_J\{\beta^2+p_+^2\e^{2\lambda_ct}
 +p_-^2\e^{-2\lambda_ct}\}.
\end{equation}

Since $\int_{-L/2}^{L/2}\e^{2\lambda_ct}\dd t =\sinh(\lambda_cL)/\lambda_c$, we conclude from integration of \eqref{eq:2.24} on the central cell that
\begin{equation}\label{eq:2.25}
 C_J^{-1}\left\{L\beta^2+
 \frac{\sinh(\lambda_cL)}{\lambda_c}(p_+^2+p_-^2)\right\}
 \le \int_{I_0}F_0
 \le C_J\left\{L\beta^2+
 \frac{\sinh(\lambda_cL)}{\lambda_c}(p_+^2+p_-^2)\right\}.
\end{equation}
Applying \eqref{eq:2.24} on $I_{-1}$ and $I_1$ and integrating the two exponential squares, we infer
\begin{align}
 &\int_{I_{-1}}F_0+\int_{I_1}F_0\ge C_J^{-1}\left\{2L\beta^2
 +2\cosh(2\lambda_cL)\frac{\sinh(\lambda_cL)}{\lambda_c}
 (p_+^2+p_-^2)\right\}.
 \label{eq:2.26}
\end{align}

Assume that \eqref{eq:2.22} fails, and put $\lambda_*=\min_{c\in J}\lambda_c>0$. The failure of \eqref{eq:2.22} implies
\begin{equation}\label{eq:2.27}
 \int_{I_{-1}}F_0+\int_{I_1}F_0
 <4K\int_{I_0}F_0.
\end{equation}
Substitution of \eqref{eq:2.25} and \eqref{eq:2.26} in \eqref{eq:2.27} yields
\begin{equation}\label{eq:2.28}
 \begin{aligned}
 &C_J^{-1}\left\{2L\beta^2+
 2\cosh(2\lambda_cL)\frac{\sinh(\lambda_cL)}{\lambda_c}
 (p_+^2+p_-^2)\right\}\\
 &\qquad<4KC_J\left\{L\beta^2+
 \frac{\sinh(\lambda_cL)}{\lambda_c}(p_+^2+p_-^2)\right\}.
 \end{aligned}
\end{equation}
Choose $L$ so large that
\begin{equation*}
 C_J^{-1}\cosh(2\lambda_*L)>4KC_J.
\end{equation*}
Moving the exponential term in \eqref{eq:2.28} to the left and using $\lambda_c\ge\lambda_*$, we obtain
\begin{equation}\label{eq:2.29}
 \frac{\sinh(\lambda_cL)}{\lambda_c}(p_+^2+p_-^2)
 \le C_{J,K}\e^{-2\lambda_*L}L\beta^2.
\end{equation}
Increase $L$, if necessary, so that the coefficient of $L\beta^2$ on the right of \eqref{eq:2.29} is at most $1/8$. Since $\sinh(\lambda_cL)/\lambda_c\ge L$ and $4|p_+p_-|\le2(p_+^2+p_-^2)$, it follows from equations \eqref{eq:2.29} and \eqref{eq:2.23} that
\begin{align}
 \int_{I_0}D_0
 &=L\lambda_c^2\left(\frac{\beta^2}{2}-4p_+p_-\right)
 \ge\frac{\lambda_*^2}{4}L\beta^2,
 \label{eq:2.30}\\
 \int_{I_0}F_0
 &\le C_J\left\{L\beta^2+
 \frac{\sinh(\lambda_cL)}{\lambda_c}(p_+^2+p_-^2)\right\}
 \le2C_JL\beta^2.
 \label{eq:2.31}
\end{align}
Equations \eqref{eq:2.30} and \eqref{eq:2.31} prove \eqref{eq:2.21} with
\begin{equation*}
 \delta_0=\frac{\lambda_*^2}{16C_J},
\end{equation*}
which is independent of $K$. If $\beta=0$, \eqref{eq:2.29} forces $p_+=p_-=0$, and both sides of \eqref{eq:2.21} vanish. This completes the proof.
\end{proof}

\subsection{The inhomogeneous one-dimensional equation}

Fix a compact interval $J=[c_-,c_+]\subset(0,\sqrt2)$. Fix also a number $M_J$, depending only on $J$, and consider functions $q$ satisfying
\begin{equation}\label{eq:2.32}
 \sum_{k=0}^3\|q^{(k)}\|_{L^\infty(\R)}\le M_J.
\end{equation}
Here $q^{(k)}$ denotes the $k$-th derivative in $x$. Section~3 verifies \eqref{eq:2.32} for $q=q_y$ using the universal elliptic estimates for \eqref{eq:1.2}. Every constant in this subsection is uniform for $c\in J$. For a cell length $L>1$ define
\begin{equation*}
 I_j=[jL,(j+1)L].
\end{equation*}
We use the three-cell window $[(j-1)L,(j+2)L]$ and, for an integer $M\ge2$, the enlarged interval
\begin{equation*}
 J_j=((j-M)L,(j+M+1)L).
\end{equation*}
Once $L$ and $M$ have been fixed, set
\begin{equation}\label{eq:2.33}
 f_j=\int_{I_j}F_q,
 \qquad d_j=\int_{I_j}D_q,
 \qquad \mathcal R_j=\int_{J_j}|R_c(q)|^2.
\end{equation}
Thus $f_j\ge0$, $d_j\in\R$, and $\mathcal R_j\ge0$.

Two local estimates are needed before the cell argument.

\begin{lemma}
\label{lem:2.5}
Let $I$ be a fixed bounded interval and $B>0$. Uniformly among the functions $q\in C^2(I,\C)$ with $\|q\|_{C^2(I)}\le B$,
\begin{equation}\label{eq:2.34}
 \inf_{\gamma\in\R}\|q-\e^{i\gamma}\|_{C^1(I)}\longrightarrow0
 \quad\hbox{as}\quad
 \int_I\left(|q'|^2+(1-|q|^2)^2\right)\longrightarrow0.
\end{equation}
\end{lemma}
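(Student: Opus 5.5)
I will argue by contradiction and compactness, using the uniform $C^2$ bound to extract a $C^1$-convergent subsequence via Arzel\`a--Ascoli. Suppose \eqref{eq:2.34} fails. Then there are $\varepsilon>0$ and functions $q_n\in C^2(I,\C)$ with $\|q_n\|_{C^2(I)}\le B$ such that
\begin{equation*}
 \int_I\bigl(|q_n'|^2+(1-|q_n|^2)^2\bigr)\longrightarrow0,
 \qquad
 \inf_{\gamma\in\R}\|q_n-\e^{i\gamma}\|_{C^1(I)}\ge\varepsilon
 \quad\text{for all }n.
\end{equation*}
Since $q_n$ and $q_n'$ are uniformly bounded and uniformly Lipschitz on $\overline I$, Arzel\`a--Ascoli gives a subsequence with $q_n\to q$ in $C^1(\overline I)$. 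Here $q\in C^{1,1}$, and $\|q\|_{C^1}\le B$.

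Next I pass to the limit in the integral. Uniform convergence gives
\begin{equation*}
 \int_I\bigl(|q'|^2+(1-|q|^2)^2\bigr)=0.
\end{equation*}
By continuity, $q'\equiv0$ and $|q|\equiv1$ on $I$. Since $I$ is an interval, $q$ is a constant $\e^{i\gamma_0}$. Then $\|q_n-\e^{i\gamma_0}\|_{C^1(I)}\to0$, which contradicts the lower bound $\varepsilon$.

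The claimed uniformity is exactly this contradiction statement. For every $\varepsilon>0$ there is $\eta>0$ such that, whenever $\|q\|_{C^2}\le B$ and the integral is below $\eta$, some $\gamma$ gives $\|q-\e^{i\gamma}\|_{C^1}<\varepsilon$. No step is a real obstacle. The only point needing care is that the compactness requires the $C^2$ bound: it makes $q_n'$ equicontinuous, which upgrades $L^2$-smallness of $q_n'$ to uniform smallness. Without it, $\|q_n'\|_{L^\infty}$ would not be controlled by the integral.
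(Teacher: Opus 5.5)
Your proof is correct, but it takes a different route from the paper's. You argue softly. You negate the uniform statement, then use the bound $\|q_n''\|_\infty\le B$ to make $q_n'$ equicontinuous. Arzel\`a--Ascoli then gives a $C^1(\overline I)$-convergent subsequence, and the vanishing integral identifies its limit as a unimodular constant.

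The paper proves a quantitative version instead. It applies the elementary interpolation inequality $\|g\|_{L^\infty(I)}\le C_I\bigl(\|g\|_{L^2(I)}+\|g\|_{L^2(I)}^{2/3}\|g'\|_{L^\infty(I)}^{1/3}\bigr)$ to $g=q'$ and to $g=1-|q|^2$. The derivative of the latter, $2\Rea(q'\overline q)$, is bounded by $2B^2$. This yields $\|q'\|_\infty+\|1-|q|^2\|_\infty\le C_{I,B}(s^{1/2}+s^{1/3})$ whenever the integral is at most $s$. The paper then chooses the phase explicitly, $\e^{i\gamma}=q(x_0)/|q(x_0)|$, and bounds $|q-\e^{i\gamma}|\le|I|\,\|q'\|_\infty+\bigl|1-|q(x_0)|^2\bigr|$.

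What each approach buys: the paper's gives an explicit rate of order $s^{1/3}$ and an explicit phase. Yours is shorter and needs no auxiliary inequality, but its threshold $\eta(\varepsilon,I,B)$ is non-constructive. The lemma is only used qualitatively, to fix $\nu$ in \eqref{eq:2.68} within the proof of Lemma~\ref{lem:2.9}, so your version suffices for everything downstream.
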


\begin{proof}
All norms in this proof are taken over $I$. We use the elementary one-dimensional interpolation estimate
\begin{equation}\label{eq:2.35}
 \|g\|_\infty
 \le C_{I}\left(\|g\|_2+
 \|g\|_2^{2/3}\|g'\|_\infty^{1/3}\right)
 \qquad(g,g'\in L^\infty(I)).
\end{equation}
To prove \eqref{eq:2.35}, take a point of $\overline I$ at which the continuous representative of $|g|$ attains its maximum. If $g$ is not constant, then on a one-sided interval adjacent to that point, of length
\begin{equation*}
 \min\left\{\frac{|I|}{2},
 \frac{\|g\|_\infty}{2\|g'\|_\infty}\right\},
\end{equation*}
we have $|g|\ge\|g\|_\infty/2$. Integration yields \eqref{eq:2.35}. If $g$ is constant, its first term already proves the estimate.

Suppose that $\int_I(|q'|^2+(1-|q|^2)^2)\dd x\le s$. Then
\begin{equation*}
 \|q'\|_2,\ \|1-|q|^2\|_2\le s^{1/2},
 \qquad \|q''\|_\infty\le B,
 \qquad \|(1-|q|^2)'\|_\infty
 =\|2\Rea(q'\overline q)\|_\infty\le2B^2.
\end{equation*}
Applying \eqref{eq:2.35} first to $g=q'$ and then to $g=1-|q|^2$, we obtain
\begin{equation*}
 \|q'\|_\infty+\|1-|q|^2\|_\infty
 \le C_{I,B}(s^{1/2}+s^{1/3}).
\end{equation*}
The constant $C_{I,B}$ depends only on $I$ and $B$, not on $q$ or $s$. Thus, for the entire class $\|q\|_{C^2(I)}\le B$, the same estimate proves
\begin{equation}\label{eq:2.36}
 \|q'\|_\infty+\|1-|q|^2\|_\infty\longrightarrow0
 \qquad(s\downarrow0).
\end{equation}
Choose $x_0\in I$. For $s$ sufficiently small, \eqref{eq:2.36} ensures $|q|^2\ge1/2$ on $I$, so $q(x_0)\ne0$. Set $\e^{i\gamma}=q(x_0)/|q(x_0)|$. The triangle inequality and $|q(x)-q(x_0)|\le |I|\,\|q'\|_\infty$ yield
\begin{equation}\label{eq:2.37}
 |q(x)-\e^{i\gamma}|
 \le |I|\,\|q'\|_\infty
 +\bigl||q(x_0)|-1\bigr|.
\end{equation}
Since $\bigl||q(x_0)|-1\bigr|\le|1-|q(x_0)|^2|$, equations \eqref{eq:2.36} and \eqref{eq:2.37} imply $\|q-\e^{i\gamma}\|_\infty+\|q'\|_\infty\to0$, which is \eqref{eq:2.34}.
\end{proof}

\begin{lemma}
\label{lem:2.6}
Fix $K>1$ and apply Lemma~\ref{lem:2.4} with the growth factor $4K$, and denote its cell length by $L$ and the constant in \eqref{eq:2.21} by $\delta_0$. There are $\sigma>0$ and $\varepsilon>0$, depending only on $J$ and $K$, such that the following holds. Let $c\in J$ and $j\in\mathbb Z$, and let $q$ satisfy \eqref{eq:2.32}. Suppose that, after multiplication by a constant phase,
\begin{equation}\label{eq:2.38}
 \|q-1\|_{L^\infty([(j-1)L,(j+2)L])}\le\sigma,
 \qquad
 \int_{(j-1)L}^{(j+2)L}|R_c(q)|^2\le\varepsilon f_j.
\end{equation}
Then either
\begin{equation}\label{eq:2.39}
 d_j\ge\frac{\delta_0}{2}f_j
\end{equation}
or
\begin{equation}\label{eq:2.40}
 \max\{f_{j-1},f_{j+1}\}\ge Kf_j.
\end{equation}
\end{lemma}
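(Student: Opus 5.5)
The plan is to argue by contradiction and compactness, reducing to the linear alternative of Lemma~\ref{lem:2.4} applied with growth factor $4K$. Suppose the statement fails. Then there are sequences $\sigma_n,\varepsilon_n\to0$, speeds $c_n\in J$, and functions $q_n$ satisfying \eqref{eq:2.32} and \eqref{eq:2.38} with $\sigma_n,\varepsilon_n$, for which both \eqref{eq:2.39} and \eqref{eq:2.40} fail. By translation invariance of all hypotheses we may take $j=0$. Pass to a subsequence with $c_n\to c\in J$.

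\textbf{Normalization.} First note that $f_0^{(n)}>0$. Indeed, if $f_0^{(n)}=0$, then $q_n$ is a unimodular constant on $I_0$, so $d_0^{(n)}=0$ and \eqref{eq:2.39} holds trivially. Write $q_n=1+\epsilon_n(A_n+iB_n)$ on the window $W=[-L,2L]$, where $\epsilon_n:=(f_0^{(n)})^{1/2}$. Then $\epsilon_n\le C\sigma_n\to0$, because $F_{q_n}=O(\sigma_n^2)$ pointwise: the smallness of $q_n'$ on $W$ follows from $\|q_n-1\|_\infty\le\sigma_n$ and the $C^3$ bound, by interpolation. Expanding,
\begin{equation*}
F_{q_n}=\epsilon_n^2\bigl(|A_n'|^2+|B_n'|^2+2A_n^2\bigr)+O\bigl(\epsilon_n^3|A_n|^3+\epsilon_n^4A_n^4+\epsilon_n^2\cdot\epsilon_n|A_n|\,(\dots)\bigr),
\end{equation*}
and $D_{q_n}$ has the analogous expansion with $D_0$ in place of $F_0$. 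More simply, $1-|q|^2=-2\epsilon A-\epsilon^2(A^2+B^2)$.

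Here lies the main obstacle. One needs uniform bounds for $(A_n,B_n)$ on $W$ in a norm strong enough to pass to the limit, but only $f_0$ is used for normalization; the neighbouring cells are controlled only by the failure of \eqref{eq:2.40}, which gives $f_{\pm1}^{(n)}<Kf_0^{(n)}$. Hence $\int_W F_{q_n}\le(2K+1)\epsilon_n^2$. Since $|1-|q_n|^2|\ge(2-O(\sigma_n))\,|\epsilon_nA_n|-O(\epsilon_n^2B_n^2)$ and $|q_n-1|\le\sigma_n$, this yields $\|A_n'\|_{L^2(W)}+\|B_n'\|_{L^2(W)}+\|A_n\|_{L^2(W)}\le C_K$. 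The term $\epsilon_nB_n^2$ is absorbed because $\epsilon_n|B_n|\le\sigma_n$.

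The phase $B_n$ is determined only up to a constant, so subtract its mean over $W$ (adjusting the constant phase, which leaves $F$, $D$, $R_c$ invariant). Poincar\'e then gives an $H^1(W)$ bound. The residual condition gives $\|R_{c_n}(q_n)\|_{L^2(W)}\le\varepsilon_n^{1/2}\epsilon_n$. Dividing the equation by $\epsilon_n$ shows that $(A_n,B_n)$ solve \eqref{eq:2.16} at speed $c_n$ up to an $L^2$ error $o(1)+O(\sigma_n)(|A_n|+|B_n|+\dots)$, the nonlinear terms being $O(\epsilon_n)$ times bounded quantities. Elliptic regularity for this ODE system then bounds $(A_n,B_n)$ in $H^2(W')$ on a slightly smaller window, or on $W$ itself with local estimates, and we extract a limit $(A_n,B_n)\to(A,B)$ strongly in $H^1$ that solves \eqref{eq:2.16} at speed $c$. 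To avoid boundary issues one may use the interior cells directly: the linearized ODE is of second order, so the $H^1(W)$ bound together with the equation controls $H^2(W)$ outright.

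\textbf{Conclusion.} Strong $H^1$ convergence gives
\begin{equation*}
\epsilon_n^{-2}f_k^{(n)}\to\int_{I_k}F_0,\qquad \epsilon_n^{-2}d_0^{(n)}\to\int_{I_0}D_0\qquad (k=-1,0,1),
\end{equation*}
with $\int_{I_0}F_0=1$. The failures of \eqref{eq:2.39} and \eqref{eq:2.40} pass to the limit as $\int_{I_0}D_0\le\frac{\delta_0}{2}$ and $\max\{\int_{I_{\pm1}}F_0\}\le K<2\cdot4K$. This contradicts Lemma~\ref{lem:2.4} applied with growth factor $4K$, since that lemma forces $\int_{I_0}D_0\ge2\delta_0$ in this case. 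I expect the delicate point to be precisely the uniform $H^1$ control of $A_n$ and $B_n$ near the window edges, together with the justification that the cubic and quartic remainders are $o(\epsilon_n^2)$ in $L^1(W)$. Both are handled by $\epsilon_n|A_n|,\epsilon_n|B_n|\le\sigma_n\to0$.
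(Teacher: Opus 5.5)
Your proposal is correct and follows the paper's proof essentially step by step. The paper's steps are: contradiction with $\sigma_n,\varepsilon_n\to0$, normalization by $f_0^{1/2}$, an $H^1$ bound from the failure of \eqref{eq:2.40} plus Poincar\'e after fixing the phase (you impose zero mean of the imaginary part where the paper pins $\Ima z_n(x_0)=0$), an $H^2$ bound read off from the equation, a strong $H^1$ limit solving \eqref{eq:2.16}, and a contradiction with Lemma~\ref{lem:2.4} at growth factor $4K$. Your only slip is the aside $\epsilon_n\le C\sigma_n$, which interpolation from the $C^3$ bound alone does not give (it gives roughly $\epsilon_n\lesssim\sigma_n^{2/3}$); nothing depends on it, since only $\|q_n-1\|_\infty\to0$ enters the absorption and remainder estimates.
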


\begin{proof}
By translation, it suffices to consider $j=0$. If $f_0=0$, then $q$ is constant with modulus one on $I_0$, and \eqref{eq:2.39} holds.

Suppose that the conclusion is false. Then there are $c_n\in J$ and functions $q_n$ such that, after a constant phase rotation,
\begin{equation*}
 \|q_n-1\|_{L^\infty([-L,2L])}\le\frac1n,
 \qquad
 \int_{-L}^{2L}|R_{c_n}(q_n)|^2\dd x
 \le\frac1n f_{0,n},
\end{equation*}
while
\begin{equation*}
 d_{0,n}<\frac{\delta_0}{2}f_{0,n},
 \qquad
 \max\{f_{-1,n},f_{1,n}\}<Kf_{0,n}.
\end{equation*}
Here $f_{k,n}$ and $d_{k,n}$ are defined by \eqref{eq:2.33} with $q=q_n$. Necessarily $f_{0,n}>0$.

Fix $x_0\in I_0$. A further phase rotation, which preserves $f_{k,n}$, $d_{k,n}$, and $|R_{c_n}(q_n)|$, allows us to assume
\begin{equation*}
 q_n(x_0)>0,
 \qquad
 \|q_n-1\|_{L^\infty([-L,2L])}\longrightarrow0.
\end{equation*}
Write
\begin{equation*}
 q_n=1+z_n,
 \qquad
 \Im z_n(x_0)=0,
 \qquad
 s_n=f_{0,n}^{1/2},
 \qquad
 u_n=\frac{z_n}{s_n}.
\end{equation*}
The identity
\begin{equation*}
 \frac12(1-|1+z|^2)^2
 =2(\Re z)^2+2(\Re z)|z|^2+\frac12|z|^4
\end{equation*}
gives
\begin{equation*}
 \begin{aligned}
 \bigl|F_{q_n}-|z_n'|^2-2(\Re z_n)^2\bigr|
 &\le C\|z_n\|_\infty|z_n|^2,\\
 \bigl|D_{q_n}-|z_n'|^2+2(\Re z_n)^2\bigr|
 &\le C\|z_n\|_\infty|z_n|^2.
 \end{aligned}
\end{equation*}
All norms in the next estimates are taken over $[-L,2L]$. Since $\Im z_n(x_0)=0$, Poincar\'e's inequality gives $\|\Im z_n\|_2\le C_{J,K}\|z_n'\|_2$. The expansion of $F_{q_n}$ then implies
\begin{equation*}
 \|z_n\|_{H^1}^2
 \le C_{J,K}\int_{-L}^{2L}F_{q_n}\dd x
 +C_{J,K}\|z_n\|_\infty\|z_n\|_2^2.
\end{equation*}
Absorb the last term using $\|z_n\|_\infty\to0$. The failure of \eqref{eq:2.40} means that $\int_{-L}^{2L}F_{q_n}\dd x=f_{-1,n}+f_{0,n}+f_{1,n}<(1+2K)s_n^2$. Dividing by $s_n^2$ therefore yields
\begin{equation*}
 \|u_n\|_{H^1([-L,2L])}\le C_{J,K}.
\end{equation*}

The equation for $u_n$ is
\begin{equation*}
 u_n''+ic_nu_n'-2\Re u_n
 =\frac{R_{c_n}(q_n)}{s_n}
  +2(\Re z_n)u_n+|z_n||u_n|(1+z_n).
\end{equation*}
Moreover,
\begin{equation*}
 \left\|2(\Re z_n)u_n+|z_n||u_n|(1+z_n)\right\|_2
 \le(3+\|z_n\|_\infty)\|z_n\|_\infty\|u_n\|_2\longrightarrow0.
\end{equation*}
Since $\|R_{c_n}(q_n)/s_n\|_2\le n^{-1/2}$, the right-hand side tends to zero in $L^2([-L,2L])$. Hence $(u_n)$ is bounded in $H^2([-L,2L])$. After passing to a subsequence,
\begin{equation*}
 c_n\longrightarrow c\in J,
 \qquad
 u_n\longrightarrow u
 \quad\text{strongly in }H^1([-L,2L]),
\end{equation*}
and
\begin{equation*}
 u''+icu'-2\Re u=0.
\end{equation*}

Dividing the preceding energy expansions by $s_n^2=f_{0,n}$ and passing to the limit, we conclude that
\begin{equation*}
 \int_{I_0}\bigl(|u'|^2+2(\Re u)^2\bigr)\dd x=1,
\end{equation*}
together with
\begin{equation*}
 \int_{I_0}\bigl(|u'|^2-2(\Re u)^2\bigr)\dd x
 \le\frac{\delta_0}{2}
\end{equation*}
and
\begin{equation*}
 \max_{k=-1,1}
 \int_{I_k}\bigl(|u'|^2+2(\Re u)^2\bigr)\dd x
 \le K.
\end{equation*}
Apply Lemma~\ref{lem:2.4} to $(A,B)=(\Re u,\Im u)$ with growth factor $4K$. Since $\int_{I_0}F_0=1$, it requires either
\begin{equation*}
 \int_{I_0}\bigl(|u'|^2-2(\Re u)^2\bigr)\dd x
 \ge2\delta_0,
 \qquad\text{or}\qquad
 \max_{k=-1,1}\int_{I_k}\bigl(|u'|^2+2(\Re u)^2\bigr)\dd x
 \ge8K.
\end{equation*}
The first contradicts the bound $\delta_0/2$ for the central $D_0$ integral, and the second contradicts the bound $K$ for the neighboring $F_0$ integrals.
\end{proof}

\begin{lemma}
\label{lem:2.7}
Fix the length $L$ selected in Lemma~\ref{lem:2.6} and a number $\sigma>0$. Let $0<\delta<\delta_J/16$, where $\delta_J$ is the constant in Lemma~\ref{lem:2.3}. There exist an integer $M\ge2$ and constants $\kappa>0$, $C_0<\infty$ with the following property. Let $c\in J$ and $j\in\mathbb Z$, and let $q_0$ be an exact solution of $R_c(q_0)=0$ whose state $(q_0,q_0')$ is bounded by $2M_J$ on $J_j$ and by $M_J$ at the center $(j+\tfrac12)L$, where the state norm is the Euclidean norm on $\C^2$. Then at least one of the following holds:
\begin{enumerate}
\item after a constant rotation,
\begin{equation}\label{eq:2.53}
 \|q_0-1\|_{C^1([(j-1)L,(j+2)L])}<\frac\sigma2.
\end{equation}
\item
\begin{equation*}
 \int_{I_j}D_{q_0}\ge4\delta\int_{I_j}F_{q_0}.
\end{equation*}
\item there is $\phi\in C_c^\infty(J_j,\C)$ such that
\begin{equation*}
 \|\phi\|_2=1,\qquad \|\phi'\|_2\le C_0,\qquad
 Q_{q_0,c}(\phi)\le-4\kappa.
\end{equation*}
\end{enumerate}
\end{lemma}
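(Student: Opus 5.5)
Translation reduces everything to $j=0$. We argue by contradiction and compactness, with $M$ the only free parameter. Suppose that for some $M\ge2$ no constants $\kappa,C_0$ work. We also let $M$ grow, and extract sequences $M_n\to\infty$, $c_n\in J$, and exact solutions $q_n$ with the stated state bounds on $J_0^{(n)}=(-M_nL,(M_n+1)L)$. We want these to fail (1) and (2), and to satisfy $Q_{q_n,c_n}(\phi)>-4/n$ for every $\phi\in C_c^\infty(J_0^{(n)})$ with $\|\phi\|_2=1$ and $\|\phi'\|_2\le n$. Here $\kappa=1/n$ and $C_0=n$ for the $n$-th attempted choice; once a contradiction is reached, some finite $M$, $\kappa$, $C_0$ must work.

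Because $R_{c}(q_n)=0$ is a second-order ODE with smooth nonlinearity, the state bound $2M_J$ on $J_0^{(n)}$ gives uniform $C^k$ bounds on compact subsets. After a subsequence, $c_n\to c\in J$ and $q_n\to q$ in $C^2_{\rm loc}(\R)$. The limit $q$ is a bounded entire solution of $R_c(q)=0$ with $|(q,q')|\le2M_J$.

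Next we show that $q$ is Morse-stable. Fix $\phi\in C_c^\infty(\R,\C)$ normalized with $\|\phi\|_2=1$. For large $n$ its support lies in $J_0^{(n)}$ and $\|\phi'\|_2\le n$, so $Q_{q_n,c_n}(\phi)>-4/n$. The $C^0_{\rm loc}$ convergence of $q_n$ then gives $Q_{q,c}(\phi)\ge0$. Proposition~\ref{prop:2.2} now forces $q=\rho\e^{i(\omega x+\gamma)}$ with \eqref{eq:2.14}.

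We split according to $(\rho,\omega)$.

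\emph{Case $(\rho,\omega)=(1,0)$.} Then $q$ is a unimodular constant, and $C^1$ convergence on $[-L,2L]$ gives \eqref{eq:2.53} after rotation for large $n$. This contradicts the failure of (1).

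\emph{Case $(\rho,\omega)\neq(1,0)$.} Lemma~\ref{lem:2.3} gives $D_q\ge\delta_JF_q$ pointwise, with $F_q$ a positive constant. Since $F_{q_n}\to F_q$ and $D_{q_n}\to D_q$ uniformly on $I_0$, for large $n$ we get
\[\int_{I_0}D_{q_n}\ge\tfrac{\delta_J}{2}\int_{I_0}F_{q_n}\ge4\delta\int_{I_0}F_{q_n},\]
using $\delta<\delta_J/16$. This contradicts the failure of (2).

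The contradiction argument shows that if no admissible $\kappa$ and $C_0$ exist for arbitrarily large $M$, we reach an impossibility. Hence a single $M$ works together with some $\kappa,C_0$. The condition $\|\phi'\|_2\le C_0$ is harmless, because the tests arising from the contradiction hypothesis are already restricted to $\|\phi'\|_2\le n$.

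The main subtlety is the uniformity of this extraction. The contradiction must quantify over $M$, $\kappa$ and $C_0$ simultaneously, which is why we take a diagonal sequence with $M_n\to\infty$. We also need $Q_{q_n,c_n}(\phi)$ to converge for fixed $\phi$, and this uses only local uniform convergence. The center bound $M_J$ seems unnecessary for this argument beyond ensuring a limit exists.
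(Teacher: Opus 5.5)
Your proposal is correct and follows the paper's proof essentially step for step. The paper also argues by contradiction with $M=n$, $\kappa=1/n$, $C_0=n$, and extracts a locally smooth limit $q$ with $R_c(q)=0$ on $\R$. It then uses Proposition~\ref{prop:2.2} to force \eqref{eq:2.14}. Finally it splits into the case $F_q\equiv0$ (your $(\rho,\omega)=(1,0)$), which contradicts the failure of alternative~1, and the case $F_q>0$, where Lemma~\ref{lem:2.3} contradicts the failure of alternative~2.

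The only blemish is your opening line. The negation of the lemma is that \emph{every} $M$ fails, and that is what you actually use when you take $M_n\to\infty$. You are also right that the center bound $M_J$ plays no role here; it is needed only later, in Lemma~\ref{lem:2.8}.
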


\begin{proof}
Translate the center of the cell to the origin. Suppose the conclusion were false. For each integer $n\ge2$, use $M=n$, $\kappa=1/n$, and $C_0=n$ to choose $c_n\in J$ and an exact solution $q_n$ on
\begin{equation*}
 (-(n+\tfrac12)L,(n+\tfrac12)L)
\end{equation*}
whose state is bounded by $2M_J$ there and by $M_J$ at the origin, but for which all three alternatives fail. Explicitly,
\begin{align}
 \inf_{\gamma\in\R}
 \|\e^{-i\gamma}q_n-1\|_{C^1([-3L/2,3L/2])}
 &\ge\frac{\sigma}{2},\label{eq:2.54}\\
 \int_{-L/2}^{L/2}D_{q_n}
 &<4\delta\int_{-L/2}^{L/2}F_{q_n},\label{eq:2.55}
\end{align}
and no $\phi\in C_c^\infty((-(n+\tfrac12)L, (n+\tfrac12)L),\C)$ satisfies
\begin{equation}\label{eq:2.56}
 \|\phi\|_2=1,\qquad \|\phi'\|_2\le n,\qquad
 Q_{q_n,c_n}(\phi)\le-\frac4n.
\end{equation}

After taking a subsequence, $c_n\to c\in J$ and the bounded initial data $(q_n(0),q_n'(0))$ converge. The equation
\begin{equation*}
 q_n''+ic_nq_n'+(1-|q_n|^2)q_n=0
\end{equation*}
and the state bound yield, by Arzelà--Ascoli on every fixed interval,
\begin{equation}\label{eq:2.57}
 q_n\longrightarrow q
 \quad\text{locally together with all derivatives},
\end{equation}
where $R_c(q)=0$ on $\R$ and $\sup_\R|(q,q')|\le2M_J$.

If $q$ does not satisfy \eqref{eq:2.14}, by Proposition~\ref{prop:2.2} there is a compactly supported $\phi\ne0$ with $Q_{q,c}(\phi)<0$. After dividing by $\|\phi\|_2$, we may assume $\|\phi\|_2=1$. For all large $n$, its support lies in the interval in \eqref{eq:2.56}, $\|\phi'\|_2\le n$, and we deduce from \eqref{eq:2.57} that
\begin{equation*}
 Q_{q_n,c_n}(\phi)\le\frac12Q_{q,c}(\phi)\le-\frac4n,
\end{equation*}
contradicting \eqref{eq:2.56}. Hence $q$ satisfies \eqref{eq:2.14}.

If $F_q\equiv0$, \eqref{eq:2.4} implies $q'=0$ and $|q|=1$. A constant rotation then changes $q$ to $1$, and \eqref{eq:2.57} contradicts \eqref{eq:2.54}. If $F_q$ is not identically zero, its integral over $[-L/2,L/2]$ is positive because a function satisfying \eqref{eq:2.14} has constant $F_q$. Lemma~\ref{lem:2.3} gives
\begin{equation*}
 \int_{-L/2}^{L/2}D_q-4\delta\int_{-L/2}^{L/2}F_q
 \ge(\delta_J-4\delta)\int_{-L/2}^{L/2}F_q>0.
\end{equation*}
By local convergence, $\int_{-L/2}^{L/2}D_{q_n}>4\delta\int_{-L/2}^{L/2}F_{q_n}$ for all large $n$, contradicting \eqref{eq:2.55}.
\end{proof}

\begin{lemma}
\label{lem:2.8}
Fix $K>1$, $\sigma>0$, $0<\delta<\delta_J/16$, and $\nu>0$, and let $M$, $\kappa$, and $C_0$ be supplied by Lemma~\ref{lem:2.7} for these parameters. There is $\varepsilon>0$, depending on the displayed parameters and on $J,K$, with the following property. Let $c\in J$ and $j\in\mathbb Z$, and let $q$ satisfy \eqref{eq:2.32}. If
\begin{equation*}
 f_j\ge\nu,\qquad \mathcal R_j\le\varepsilon f_j,\qquad
 \max(f_{j-1},f_{j+1})<Kf_j,
\end{equation*}
then either
\begin{equation}\label{eq:2.58}
 d_j\ge\delta f_j,
\end{equation}
or there is a test $\phi\in C_c^\infty(J_j,\C)$ satisfying
\begin{equation}\label{eq:2.59}
 \|\phi\|_2=1,\qquad \|\phi'\|_2\le C_0,\qquad
 Q_{q,c}(\phi)\le-2\kappa,
\end{equation}
or, after a constant rotation,
\begin{equation}\label{eq:2.60}
 \|q-1\|_{C^1([(j-1)L,(j+2)L])}<\sigma.
\end{equation}
\end{lemma}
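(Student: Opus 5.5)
We argue by perturbation: compare $q$ on the window $J_j$ with the exact solution $q_0$ having the same Cauchy data at the cell center $x_c=(j+\tfrac12)L$, and transfer the alternatives of Lemma~\ref{lem:2.7} from $q_0$ back to $q$.

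Let $q_0$ solve $R_c(q_0)=0$ with $(q_0,q_0')(x_c)=(q,q')(x_c)$. By \eqref{eq:2.32}, this state has Euclidean norm at most $M_J$ at the center.

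\emph{Gronwall comparison.} Set $w=q-q_0$. Then
\begin{equation*}
 w''+icw'=R_c(q)-\bigl[(1-|q|^2)q-(1-|q_0|^2)q_0\bigr],\qquad w(x_c)=w'(x_c)=0.
\end{equation*}
As long as the state of $q_0$ stays bounded by $2M_J$, the nonlinearity is Lipschitz with a constant depending only on $M_J$. Gronwall's inequality on $J_j$, an interval of length $(2M+1)L$ fixed by Lemma~\ref{lem:2.7}, then gives
\begin{equation*}
 \sup_{J_j}\bigl(|w|+|w'|\bigr)\le C_1\,\mathcal R_j^{1/2}\le C_1(\varepsilon f_j)^{1/2}.
\end{equation*}
Here $C_1$ depends on $J$, $L$, $M$ and $M_J$, and we used Cauchy--Schwarz on $\int|R_c(q)|$. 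Since $f_j\le M_J^2L+M_J^4L$ is bounded, choosing $\varepsilon$ small makes this deviation below $M_J$. A continuity (bootstrap) argument then shows that the state of $q_0$ is bounded by $2M_J$ on all of $J_j$, so Lemma~\ref{lem:2.7} applies to $q_0$.

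\emph{Transfer of each alternative.} Write $\eta=C_1(\varepsilon f_j)^{1/2}$.

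In alternative (1), $\|q-1\|_{C^1}<\sigma/2+\eta<\sigma$ once $\varepsilon$ is small, which gives \eqref{eq:2.60}.

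In alternative (3), expanding \eqref{eq:2.6} shows that $|Q_{q,c}(\phi)-Q_{q_0,c}(\phi)|\le C\|q-q_0\|_\infty\|\phi\|_2^2\le C\eta$, with $C$ depending on $M_J$. Hence $Q_{q,c}(\phi)\le-4\kappa+C\eta\le-2\kappa$, which gives \eqref{eq:2.59} with the same $\phi$.

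In alternative (2), the integrands $F$ and $D$ are locally Lipschitz in $(q,q')$. So
\begin{equation*}
 \Bigl|\int_{I_j}F_q-\int_{I_j}F_{q_0}\Bigr|+\Bigl|\int_{I_j}D_q-\int_{I_j}D_{q_0}\Bigr|\le C_2L\eta .
\end{equation*}
This gives
\begin{equation*}
 d_j\ge4\delta f_j-C_2L\eta(1+4\delta).
\end{equation*}

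\emph{Main obstacle.} The additive error $C_2L\eta$ must be absorbed into the multiplicative gain. This is exactly where the hypothesis $f_j\ge\nu$ enters. Since $\eta\le C_1\varepsilon^{1/2}f_j^{1/2}\le C_1\varepsilon^{1/2}\nu^{-1/2}f_j$, it suffices to take $\varepsilon$ so small that
\begin{equation*}
 C_2C_1L(1+4\delta)\,\varepsilon^{1/2}\nu^{-1/2}\le3\delta .
\end{equation*}
Then $d_j\ge\delta f_j$, which is \eqref{eq:2.58}. The same lower bound on $f_j$ keeps the errors in alternatives (1) and (3) uniformly small, since all three use the same $\eta$. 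The hypothesis $\max(f_{j-1},f_{j+1})<Kf_j$ is not needed in this argument; it is retained only for the later cell decomposition. All choices of $\varepsilon$ depend only on $J$, $K$, $\sigma$, $\delta$, $\nu$ and the quantities $L$, $M$, $\kappa$, $C_0$ supplied by Lemma~\ref{lem:2.7}.
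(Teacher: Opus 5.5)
Your proposal is correct and follows essentially the same route as the paper. You compare $q$ with the exact solution $q_0$ sharing its Cauchy data at the cell center, use Gronwall plus a first-exit argument to keep the state of $q_0$ in the $2M_J$ ball so that Lemma~\ref{lem:2.7} applies, and transfer each alternative via $\|q-q_0\|_{C^1(J_j)}\le C\mathcal R_j^{1/2}$, with $f_j\ge\nu$ turning the additive error into a multiplicative one in the $D$-alternative.

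As in the paper, the hypothesis $\max(f_{j-1},f_{j+1})<Kf_j$ is never used. One small slip: in alternatives (1) and (3), what makes $\eta$ uniformly small is the upper bound $f_j\le C_{J,L}$, not the lower bound $f_j\ge\nu$.
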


\begin{proof}
Translate the center to zero, and let $q_0$ solve the exact equation with the same Cauchy data:
\begin{equation*}
 q_0(0)=q(0),\qquad q_0'(0)=q'(0),\qquad R_c(q_0)=0.
\end{equation*}
On every part of $J_j\cap[0,\infty)$ on which $|(q_0,q_0')|\le2M_J$, subtract the first-order systems for $(q,q')$ and $(q_0,q_0')$. The uniform bound on these pairs gives
\begin{align}
 &|(q-q_0,q'-q_0')(t)|\notag\\
 &\qquad\le C_J\int_0^t|(q-q_0,q'-q_0')(s)|\dd s
 +\int_0^t|R_c(q)(s)|\dd s.
 \label{eq:2.61}
\end{align}
The same inequality holds to the left of zero after reversing the integration limits. It follows from Gronwall's inequality in \eqref{eq:2.61} that, as long as $|(q_0,q_0')|\le2M_J$,
\begin{equation}\label{eq:2.62}
 \|q-q_0\|_{C^1(J_j)}
 \le C_{J,L,M}\|R_c(q)\|_{L^1(J_j)}
 \le C_{J,L,M}\mathcal R_j^{1/2}.
\end{equation}
By \eqref{eq:2.32}, $|(q,q')|\le M_J$. If $t_*$ were the first point, in either direction from zero, at which $|(q_0(t_*),q_0'(t_*))|=2M_J$, then
\begin{align}
 M_J
 &\le |(q_0(t_*),q_0'(t_*))-(q(t_*),q'(t_*))|\notag\\
 &\le C_{J,L,M}\mathcal R_j^{1/2}
 \le C_{J,L,M}\varepsilon^{1/2}f_j^{1/2}
 \le C_{J,L,M}\varepsilon^{1/2}.
 \label{eq:2.63}
\end{align}
The final step in \eqref{eq:2.63} uses the uniform bound $f_j\le C_{J,L}$. Choose $\varepsilon$ so that the final term in \eqref{eq:2.63} is smaller than $M_J/2$. No such $t_*$ exists, and Lemma~\ref{lem:2.7} applies to $q_0$.

If alternative~1 of Lemma~\ref{lem:2.7} holds for $q_0$, decrease $\varepsilon$ so that $\|q-q_0\|_{C^1(J_j)}<\sigma/2$ in \eqref{eq:2.62}. Together with \eqref{eq:2.53}, this yields \eqref{eq:2.60}. If alternative~3 of Lemma~\ref{lem:2.7} holds for $q_0$, use its test $\phi$ with $\|\phi\|_2=1$ and $Q_{q_0,c}(\phi)\le-4\kappa$. Then
\begin{equation}\label{eq:2.64}
 |Q_{q,c}(\phi)-Q_{q_0,c}(\phi)|
 \le C_J\|q-q_0\|_{L^\infty(J_j)}\|\phi\|_2^2
 \le C_{J,L,M}\mathcal R_j^{1/2}.
\end{equation}
We infer from the universal $C^1$ bound that $f_j\le C_{J,L}$, and $\mathcal R_j^{1/2}\le C_{J,L}\varepsilon^{1/2}$. Choose $\varepsilon$ so that $C_{J,L,M}\mathcal R_j^{1/2}\le2\kappa$ in \eqref{eq:2.64}, which proves \eqref{eq:2.59}.

Suppose alternative~2 of Lemma~\ref{lem:2.7} holds for $q_0$. The maps $(q,q')\mapsto F_q$ and $(q,q')\mapsto D_q$ are Lipschitz on the ball $|(q,q')|\le2M_J$, which contains both $(q,q')$ and $(q_0,q_0')$ on $I_j$. We have
\begin{equation}\label{eq:2.65}
 \left|f_j-\int_{I_j}F_{q_0}\right|
 +\left|d_j-\int_{I_j}D_{q_0}\right|
 \le C_{J,L}\mathcal R_j^{1/2}.
\end{equation}
Because $f_j\ge\nu$ and $\mathcal R_j\le\varepsilon f_j$,
\begin{equation*}
 \mathcal R_j^{1/2}\le(\varepsilon/\nu)^{1/2}f_j.
\end{equation*}
Combining \eqref{eq:2.65} with $\int_{I_j}D_{q_0}\ge4\delta\int_{I_j}F_{q_0}$, we derive
\begin{equation}\label{eq:2.66}
 d_j\ge\left\{4\delta-(4\delta+1)C_{J,L}
 (\varepsilon/\nu)^{1/2}\right\}f_j.
\end{equation}
A final decrease of $\varepsilon$, so that the coefficient in braces is at least $\delta$, converts \eqref{eq:2.66} into \eqref{eq:2.58}.
\end{proof}

\begin{lemma}\label{lem:2.9}
There are constants
\begin{equation*}
 L>1,\quad K>4,\quad M\ge2,\quad \varepsilon>0,\quad
 \delta>0,\quad\kappa>0,\quad C_0<\infty,
\end{equation*}
depending only on $J$, with
\begin{equation*}
 \frac{2}{K-2}<\frac\delta4,
\end{equation*}
such that every $q$ satisfying \eqref{eq:2.32}, every $c\in J$, and every $j\in\mathbb Z$ has at least one of the following properties:
\begin{enumerate}
\item $\mathcal R_j>\varepsilon f_j$.
\item $d_j\ge\delta f_j$.
\item there is $\phi\in H_0^1(J_j,\C)$ such that
\begin{equation}\label{eq:2.67}
 \|\phi\|_{L^2}=1,
 \qquad \|\phi'\|_{L^2}\le C_0,
 \qquad Q_{q,c}(\phi)\le-2\kappa.
\end{equation}
\item
\begin{equation*}
 \max\{f_{j-1},f_{j+1}\}\ge Kf_j.
\end{equation*}
\end{enumerate}
\end{lemma}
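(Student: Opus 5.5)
Lemma~\ref{lem:2.9} follows by fixing the parameters in the right order and then combining Lemmas~\ref{lem:2.6}, \ref{lem:2.7} and~\ref{lem:2.8}. The only case none of them covers is a cell with tiny energy, and that one needs a short separate argument.

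\emph{Fixing the constants.} Let $\delta_0=\delta_0(J)$ be the constant of Lemma~\ref{lem:2.4}; it does not depend on $K$. Set
\begin{equation*}
 \delta=\min\{\delta_0/2,\ \delta_J/32\}.
\end{equation*}
Choose $K>4$ so large that $2/(K-2)<\delta/4$. Lemma~\ref{lem:2.4} with growth factor $4K$ then gives $L$, and Lemma~\ref{lem:2.6} gives $\sigma$ and a constant $\varepsilon_1$. Feed $L,\sigma,\delta$ into Lemma~\ref{lem:2.7} to get $M,\kappa,C_0$. The remaining choices are a threshold $\nu>0$, fixed below, and $\varepsilon_2$ from Lemma~\ref{lem:2.8}. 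Finally let $\varepsilon\le\min\{\varepsilon_1,\varepsilon_2\}$, shrunk further as needed.

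\emph{Main case.} Suppose alternatives~1 and~4 both fail, so $\mathcal R_j\le\varepsilon f_j$ and $\max\{f_{j-1},f_{j+1}\}<Kf_j$.

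\textbf{(a)} If $f_j\ge\nu$, Lemma~\ref{lem:2.8} gives one of three outcomes.
\begin{itemize}
 \item $d_j\ge\delta f_j$, which is alternative~2.
 \item A negative test $\phi$, which is alternative~3, since $C_c^\infty\subset H_0^1$.
 \item $\|q-1\|_{C^1}<\sigma$ on the three-cell window after a rotation. Because $[(j-1)L,(j+2)L]\subset J_j$, we have $\int_{(j-1)L}^{(j+2)L}|R_c(q)|^2\le\mathcal R_j\le\varepsilon f_j$, so hypothesis \eqref{eq:2.38} holds. Lemma~\ref{lem:2.6} then gives $d_j\ge(\delta_0/2)f_j\ge\delta f_j$, or the growth \eqref{eq:2.40}. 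The growth is excluded by assumption.
\end{itemize}

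\textbf{(b)} If $f_j<\nu$, I expect this to be the main obstacle. The plan is to choose $\nu$ so small that $f_j<\nu$ together with \eqref{eq:2.32} forces $q$ to be $\sigma$-close to a unimodular constant on the three-cell window. Then Lemma~\ref{lem:2.6} applies directly, as in the last bullet of (a).
\begin{itemize}
 \item The failure of alternative~4 gives $f_{j-1}+f_j+f_{j+1}<(1+2K)\nu$.
 \item Since $\int(|q'|^2+(1-|q|^2)^2)\le 2\int F_q$, this integral over the window is small.
 \item Lemma~\ref{lem:2.5}, applied on the fixed interval $[(j-1)L,(j+2)L]$ with $C^2$ bound $M_J$, gives $\inf_\gamma\|q-\e^{i\gamma}\|_{C^1}\to0$ uniformly in $q$.
\end{itemize}
So for $\nu$ small, depending only on $J,K,L,\sigma$, we get $\|\e^{-i\gamma}q-1\|_\infty\le\sigma$. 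Lemma~\ref{lem:2.6} then yields alternative~2 again, possibly with $f_j=0$, where it holds trivially.

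\emph{Order of choices.} Circularity must be avoided: $\nu$ depends only on $(J,K,L,\sigma)$, and only afterwards is Lemma~\ref{lem:2.8} invoked with this $\nu$ to produce $\varepsilon_2$. All constants therefore depend only on $J$. The rest is bookkeeping, in particular checking that $J_j$ contains the window used by Lemma~\ref{lem:2.6}, which holds since $M\ge2$.
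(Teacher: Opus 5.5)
Your proposal is correct and follows essentially the same route as the paper. It uses the same order of constants: $\delta$ is fixed first, which is legitimate because $\delta_0$ is independent of $K$; then come $K$, $L$ and $\sigma$, then $M,\kappa,C_0$, then $\nu$ via Lemma~\ref{lem:2.5}, and finally $\varepsilon$ via Lemmas~\ref{lem:2.6} and~\ref{lem:2.8}. It also makes the same case split, $f_j<\nu$ handled by Lemma~\ref{lem:2.5} plus Lemma~\ref{lem:2.6}, and $f_j\ge\nu$ handled by Lemma~\ref{lem:2.8} with its closeness alternative fed back into Lemma~\ref{lem:2.6}. Your choice $\delta=\min\{\delta_0/2,\delta_J/32\}$ is larger than the paper's, but it satisfies everything this lemma actually uses, namely $\delta<\delta_J/16$ and $\delta\le\delta_0/2$.
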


\begin{proof}
Choose the constants in the following order. The proof of Lemma~\ref{lem:2.4} provides a positive lower bound for the constant $\delta_0$ in \eqref{eq:2.21}, depending only on $J$ and independently of its growth factor. Fix once and for all
\begin{equation*}
 0<\delta<\min\{\delta_J/128,\delta_0/16\}.
\end{equation*}
Choose $K>4$ so large that $2/(K-2)<\delta/4$, and apply Lemma~\ref{lem:2.4} with growth factor $4K$. This fixes $L$. Apply Lemma~\ref{lem:2.6}. It fixes $\sigma$ and a positive forcing threshold. Notice that its coefficient $\delta_0/2$ in \eqref{eq:2.39} is larger than $\delta$. Apply Lemma~\ref{lem:2.7} with the value of $\delta$ just chosen. This fixes $M,\kappa,C_0$. Choose $\nu>0$ so small that Lemma~\ref{lem:2.5}, on an interval of length $3L$ and with the bound $M_J$, ensures
\begin{equation}\label{eq:2.68}
 \int_{(j-1)L}^{(j+2)L}\bigl(|q'|^2+(1-|q|^2)^2\bigr)
 \le2(1+2K)\nu
 \quad\Longrightarrow\quad
 \inf_{\gamma\in\R}\|q-\e^{i\gamma}\|_{C^1([(j-1)L,(j+2)L])}<\sigma.
\end{equation}
Apply Lemma~\ref{lem:2.8} for this $\nu$. Take $\varepsilon>0$ smaller than the forcing thresholds in that lemma and in Lemma~\ref{lem:2.6}.

Fix a cell and assume that properties 1 and 4 fail. If $f_j<\nu$, then
\begin{equation*}
 \int_{(j-1)L}^{(j+2)L}\bigl(|q'|^2+(1-|q|^2)^2\bigr)
 \le2(f_{j-1}+f_j+f_{j+1})
 <2(1+2K)\nu.
\end{equation*}
By \eqref{eq:2.68}, a constant rotation makes $\|q-1\|_{C^1([(j-1)L,(j+2)L])}<\sigma$. We also have
\begin{equation*}
 \int_{(j-1)L}^{(j+2)L}|R_c(q)|^2\le \mathcal R_j
 \le\varepsilon f_j.
\end{equation*}
Lemma~\ref{lem:2.6} and the assumed failure of property 4 imply $d_j\ge(\delta_0/2)f_j\ge\delta f_j$, which is property 2.

If $f_j\ge\nu$, Lemma~\ref{lem:2.8} gives property 2, property 3, or \eqref{eq:2.60}. In the last case, the same application of Lemma~\ref{lem:2.6} yields property 2. Failure of properties 1 and 4 therefore implies property 2 or property 3. This proves the lemma. Every constant depends only on $J$.
\end{proof}

For later measurability, define the bottom Dirichlet eigenvalue by
\begin{equation}\label{eq:2.69}
 \lambda_1(q,c,J_j):=
 \inf_{\substack{\phi\in H_0^1(J_j,\C)\\\|\phi\|_2=1}}
 Q_{q,c}^{J_j}(\phi).
\end{equation}
The Hilbert space in \eqref{eq:2.69} is real. The form is closed and bounded below, and the embedding $H_0^1(J_j,\C)\hookrightarrow L^2(J_j,\C)$ is compact. Hence the infimum is attained. For every $\phi\in H_0^1(J_j,\C)$ with $\|\phi\|_2=1$,
\begin{equation*}
 Q_{q,c}(\phi)
 \ge\frac12\|\phi'\|_2^2-C_J,
\end{equation*}
by Young's inequality and the universal bound on $q$. If $\lambda_1(q,c,J_j)\le-2\kappa$, the normalized minimizer $\phi$ satisfies
\begin{equation}\label{eq:2.70}
 \frac12\|\phi'\|_2^2-C_J\le\lambda_1\le-2\kappa,
 \qquad
 \|\phi'\|_2\le(2C_J)^{1/2}.
\end{equation}
After increasing $C_0$ to at least $(2C_J)^{1/2}$, \eqref{eq:2.70} shows that property 3 in Lemma~\ref{lem:2.9} holds if and only if
\begin{equation}\label{eq:2.71}
 \lambda_1(q,c,J_j)\le-2\kappa.
\end{equation}
If \eqref{eq:2.71} holds, the normalized minimizing eigenfunction satisfies \eqref{eq:2.67}. Conversely, a test satisfying \eqref{eq:2.67} bounds the infimum in \eqref{eq:2.69} by $-2\kappa$. We use this minimizing eigenfunction when comparing nearby slices in Lemma~\ref{lem:3.2}.

The four properties in Lemma~\ref{lem:2.9} can overlap. We use their displayed order and define only one assignment symbol:
\begin{equation}\label{eq:2.72}
 a_j(q):=\min\{k\in\{1,2,3,4\}:\text{property $k$ holds for cell $j$}\}.
\end{equation}

Lemma~\ref{lem:2.9} yields a local alternative on every cell. We now sum these alternatives over all cells. The fourth alternative is reduced to the first three by a geometric-series estimate.

\begin{proposition}
\label{prop:2.10}
Let $c\in J$, and let $q$ satisfy \eqref{eq:2.32}. Assume that $F_q\in L^1(\R)$ and $R_c(q)\in L^2(\R)$. Then there is a constant $C_J>0$ such that
\begin{equation}\label{eq:2.73}
 \begin{aligned}
 \int_\R F_q\dd x
 &\le
 \frac{1+2/(K-2)}{\delta-2/(K-2)}
 \int_\R D_q\dd x\\
&\qquad +C_J\left\{
 \int_\R|R_c(q)|^2\dd x 
+\#\{j\in\mathbb Z:a_j(q)=3\}\right\}.
 \end{aligned}
\end{equation}
Here $\#$ denotes cardinality and is allowed to be $+\infty$.
\end{proposition}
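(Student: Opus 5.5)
The plan is to split $\mathbb Z$ according to the assignment \eqref{eq:2.72}, writing $S_k=\{j:a_j(q)=k\}$ for $k=1,2,3,4$, and $G=S_1\cup S_2\cup S_3$. All sums below converge absolutely because $F_q\in L^1(\R)$ and $|D_q|\le F_q$, so $|d_j|\le f_j$. If $\#S_3=\infty$ there is nothing to prove, so we assume it is finite.

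\emph{Cells in $G$.} On $S_1$ we have $f_j<\varepsilon^{-1}\mathcal R_j$. Every point of $\R$ lies in at most $2M+1$ of the intervals $J_j$, so
\begin{equation*}
 \sum_{S_1}f_j\le\varepsilon^{-1}(2M+1)\int_\R|R_c(q)|^2\dd x .
\end{equation*}
On $S_3$, the bound \eqref{eq:2.32} gives $F_q\le C_J$ pointwise, hence $f_j\le C_JL$ and $\sum_{S_3}f_j\le C_J\#S_3$. On $S_2$ we have $d_j\ge\delta f_j$.

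\emph{Cells in $S_4$.} For $j\in S_4$ we may take $f_j>0$. Indeed, if $f_j=0$ then $F_q=0$ on $I_j$, so $d_j=0$ and property~2 holds, which would give $a_j\le2$. From $j$, step to a neighbour $j\pm1$ with $f_{j\pm1}\ge Kf_j$. If that neighbour is again in $S_4$, repeat. Along such a chain $f$ increases by the factor $K$ at each step, while $f\le C_JL$ uniformly. Hence the chain stops after finitely many steps at some $g(j)\in G$. Since $f$ strictly increases, the chain never revisits a cell. A non-revisiting walk on $\mathbb Z$ with steps $\pm1$ is monotone, so a chain of length $m$ from $j$ ends at $g(j)=j\pm m$. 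It follows that each $g\in G$ is the endpoint of at most two chains of each length $m\ge1$, and $f_j\le K^{-m}f_{g(j)}$. This is the geometric-series step:
\begin{equation*}
 \sum_{S_4}f_j\le 2\sum_{m\ge1}K^{-m}\sum_{G}f_g\le\frac{2}{K-2}\sum_G f_g .
\end{equation*}

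\emph{Combining.} Put $X=\sum_{S_2}f_j$ and $Y=\sum_{S_1\cup S_3}f_j\le C_J\{\int|R_c(q)|^2+\#S_3\}$. Since $|d_j|\le f_j$ off $S_2$,
\begin{equation*}
 \int_\R D_q=\sum_j d_j\ge\delta X-Y-\frac{2}{K-2}(X+Y).
\end{equation*}
Because $2/(K-2)<\delta/4$, this gives
\begin{equation*}
 \Bigl(\delta-\frac2{K-2}\Bigr)X\le\int_\R D_q+C Y .
\end{equation*}
Finally,
\begin{equation*}
 \int_\R F_q=\sum_G f+\sum_{S_4}f\le\Bigl(1+\frac2{K-2}\Bigr)(X+Y),
\end{equation*}
and inserting the bound for $X$ yields \eqref{eq:2.73}.

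I expect the main obstacle to be the bookkeeping in the $S_4$ step. One has to check that chains terminate, which uses the uniform bound on $f_j$ together with excluding $f_j=0$ from $S_4$. One also has to check that they are monotone, so that each endpoint absorbs at most two cells per chain length and the geometric factor is independent of how the cells are arranged.
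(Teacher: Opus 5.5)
Your proposal is correct and follows the paper's proof essentially step for step. It uses the same partition by $a_j(q)$, the same chain-following argument with a geometric series to absorb the type-4 cells into $\sum_{a_j\in\{1,2,3\}}f_j$, and the same absorption of $\sum_{a_j=2}f_j$ using $d_j\ge\delta f_j$ and $|d_j|\le f_j$. There are two cosmetic differences. You stop the chains using the uniform bound $f_j\le C_J$, where the paper uses summability of the $f_j$. You also observe that a non-revisiting $\pm1$ walk is monotone, so each endpoint is reached from at most two starting cells per chain length, giving the factor $2/(K-1)\le 2/(K-2)$; the paper uses the cruder count of $2^n$ starting cells, which gives exactly $2/(K-2)$.
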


\begin{proof}
Unless otherwise specified, sums over $j$ run over $\mathbb Z$. Since $|D_q|\le F_q$ and $F_q\in L^1(\R)$,
\begin{equation*}
 \sum_j f_j=\int_\R F_q\dd x,
 \qquad
 \sum_j d_j=\int_\R D_q\dd x.
\end{equation*}

We first estimate the contribution from the indices with $a_j(q)=4$. For such an index, $f_j>0$. If $f_j=0$, then $F_q=0$ on $I_j$ and $d_j=0$, so property $2$ holds and $a_j(q)\le2$.

For each $j$ with $a_j(q)=4$, fix a choice $T(j)\in\{j-1,j+1\}$ such that
\begin{equation*}
 f_{T(j)}\ge Kf_j.
\end{equation*}
Starting from $j$, repeat this choice only while the current index has type $4$. Here $T^0(j)=j$ and $T^{r+1}(j)=T(T^r(j))$ whenever $a_{T^r(j)}(q)=4$. If
\begin{equation*}
 a_{T^r(j)}(q)=4
 \qquad(0\le r<m),
\end{equation*}
then iteration gives
\begin{equation}\label{eq:2.74}
 f_{T^m(j)}\ge K^mf_j.
\end{equation}
The sequence $j,T(j),T^2(j),\ldots$ cannot repeat an index. Indeed, if $T^r(j)=T^s(j)$ for some $0\le r<s$, then \eqref{eq:2.74}, starting at $T^r(j)$, implies
\begin{equation*}
 f_{T^r(j)}=f_{T^s(j)}
 \ge K^{s-r}f_{T^r(j)}>f_{T^r(j)},
\end{equation*}
because $f_{T^r(j)}>0$ and $K>1$, a contradiction. Nor can all the indices have type $4$: since they are distinct, for every $m$ we would have
\begin{equation*}
 \sum_{k\in\mathbb Z}f_k
 \ge\sum_{r=0}^m f_{T^r(j)}
 \ge f_j\sum_{r=0}^m K^r\longrightarrow\infty
 \qquad(m\to\infty),
\end{equation*}
contrary to $\sum_k f_k<\infty$. Thus the rule stops after finitely many steps at an index $\ell$ with $a_\ell(q)\in\{1,2,3\}$.

Fix an index $\ell$ with $a_\ell(q)\in\{1,2,3\}$. Since $T(j)\in\{j-1,j+1\}$, the equality $T(j)=\ell$ requires $j=\ell-1$ or $j=\ell+1$. At every preceding step there are again at most two choices. Hence at most $2^n$ initial indices reach $\ell$ after exactly $n$ steps. For each of them, we infer from \eqref{eq:2.74} that
\begin{equation*}
 f_j\le K^{-n}f_\ell.
\end{equation*}
Consequently,
\begin{equation*}
 \sum_{\substack{a_j(q)=4\\
                  j\text{ reaches }\ell}}f_j
 \le\sum_{n=1}^\infty\left(\frac2K\right)^nf_\ell
 =\frac2{K-2}f_\ell.
\end{equation*}
Because the choices $T(j)$ are fixed, each $j$ of type $4$ contributes to the sum for exactly one $\ell$. Summing over all $\ell$ with $a_\ell(q)\in\{1,2,3\}$, we obtain
\begin{equation}\label{eq:2.75}
 \sum_{a_j(q)=4}f_j
 \le\frac2{K-2}
 \sum_{a_j(q)\in\{1,2,3\}}f_j.
\end{equation}

We next estimate the contributions from types $1$ and $3$. The uniform bound \eqref{eq:2.32} and the fixed length $L$ give $f_j\le C_J$. Therefore
\begin{equation*}
 \sum_{a_j(q)=3}f_j
 \le C_J\#\{j\in\mathbb Z:a_j(q)=3\}.
\end{equation*}
If $a_j(q)=1$, then property $1$ implies $f_j<\varepsilon^{-1}\mathcal R_j$. Since the intervals $J_j$ have overlap multiplicity $2M+1$,
\begin{equation*}
 \sum_{a_j(q)=1}f_j
 \le\varepsilon^{-1}\sum_{a_j(q)=1}\mathcal R_j
 \le\frac{2M+1}{\varepsilon}
 \int_\R|R_c(q)|^2\dd x.
\end{equation*}
It follows that
\begin{equation*}
 \sum_{a_j(q)\in\{1,3\}}f_j
 \le C_J\left\{
 \int_\R|R_c(q)|^2\dd x
 +\#\{j\in\mathbb Z:a_j(q)=3\}\right\}.
\end{equation*}

For the indices of type $2$, we deduce from property $2$ and $|D_q|\le F_q$ that
\begin{equation*}
 \begin{aligned}
 \delta\sum_{a_j(q)=2}f_j
 \le\sum_{a_j(q)=2}d_j =\int_\R D_q\dd x-\sum_{a_j(q)\ne2}d_j \le\int_\R D_q\dd x
 +\sum_{a_j(q)\in\{1,3,4\}}f_j.
 \end{aligned}
\end{equation*}
From \eqref{eq:2.75}, we conclude that
\begin{equation*}
 \begin{aligned}
 \left(\delta-\frac2{K-2}\right)
 \sum_{a_j(q)=2}f_j
 &\le\int_\R D_q\dd x +\left(1+\frac2{K-2}\right)
 \sum_{a_j(q)\in\{1,3\}}f_j.
 \end{aligned}
\end{equation*}
The coefficient on the left is positive because $2/(K-2)<\delta/4$.

A second application of \eqref{eq:2.75} yields
\begin{equation*}
 \begin{aligned}
 \sum_jf_j
 &\le\left(1+\frac2{K-2}\right)
 \left(
 \sum_{a_j(q)\in\{1,3\}}f_j
 +\sum_{a_j(q)=2}f_j
 \right)\\
 &\le
 \frac{1+2/(K-2)}{\delta-2/(K-2)}
 \int_\R D_q\dd x\\
 &\quad+
 \left(1+\frac2{K-2}\right)
 \left(
 1+\frac{1+2/(K-2)}{\delta-2/(K-2)}
 \right)
 \sum_{a_j(q)\in\{1,3\}}f_j.
 \end{aligned}
\end{equation*}
We retain the coefficient of $\int_\R D_q$, since this integral is not assumed to be nonnegative. The coefficient of the last sum depends only on $J$. Substituting the preceding estimate for the type $1$ and type $3$ indices proves \eqref{eq:2.73}.
\end{proof}

\section{The two-dimensional energy estimate and existence theory}

In this section $\psi$ is a finite-energy solution of \eqref{eq:1.2}, $c$ belongs to a fixed compact interval $J\subset(0,\sqrt2)$, and $q_y$ is the function defined by \eqref{eq:2.1}. We first estimate the residual contribution $\int_\R|R_c(q_y)|^2\dd x$ and the count $\#\{j:a_j(q_y)=3\}$ after integration over $y$. We then integrate that estimate with respect to $y$ and prove Theorems~\ref{thm:1.1} and~\ref{thm:1.2}.

\subsection{Planar identities and the transverse derivative}

Define two functions on $\R^2$ by applying \eqref{eq:2.4} on each horizontal line:
\begin{equation}\label{eq:3.1}
 \begin{aligned}
 F(\mathbf{x})&:=F_{q_y}(x)
 =|\partial_x\psi|^2+\frac12(1-|\psi|^2)^2,\\
 D(\mathbf{x})&:=D_{q_y}(x)
 =|\partial_x\psi|^2-\frac12(1-|\psi|^2)^2.
 \end{aligned}
\end{equation}
In particular, $F\ge0$ and $|D|\le F$ pointwise. For $d=2$, \cite[Equation~(2.1)]{BellazziniRuiz} and \cite[Lemma~2.2]{BellazziniRuiz} give the first two identities in \eqref{eq:3.2}. Combining them, we obtain the third identity.
\begin{equation}\label{eq:3.2}
 I_c(\psi)=\int_{\R^2}|\partial_y\psi|^2\dd\mathbf{x},
 \qquad
 cP(\psi)=\frac12\int_{\R^2}(1-|\psi|^2)^2\dd\mathbf{x},
 \qquad
 \int_{\R^2}D\dd\mathbf{x}
 =\int_{\R^2}|\partial_y\psi|^2\dd\mathbf{x}.
\end{equation}
We verify the last identity, which was not explicitly given in \cite{BellazziniRuiz}. From the definition of $I_c$, we have
\begin{equation}\label{eq:3.3}
 I_c(\psi)
 =\frac12\int_{\R^2}
 \bigl(|\partial_x\psi|^2+|\partial_y\psi|^2\bigr)\dd\mathbf{x}
 +\frac14\int_{\R^2}(1-|\psi|^2)^2\dd\mathbf{x}
 -cP(\psi).
\end{equation}
Substituting the first two identities in \eqref{eq:3.2} into \eqref{eq:3.3} yields
\begin{equation}\label{eq:3.4}
 \int_{\R^2}|\partial_y\psi|^2\dd\mathbf{x}
 =\frac12\int_{\R^2}
 \bigl(|\partial_x\psi|^2+|\partial_y\psi|^2\bigr)\dd\mathbf{x}
 -\frac14\int_{\R^2}(1-|\psi|^2)^2\dd\mathbf{x}.
\end{equation}
Rearranging \eqref{eq:3.4}, we derive
\begin{equation}\label{eq:3.5}
 \int_{\R^2}
 \left\{|\partial_x\psi|^2
 -\frac12(1-|\psi|^2)^2\right\}\dd\mathbf{x}
 =\int_{\R^2}|\partial_y\psi|^2\dd\mathbf{x},
\end{equation}
which is the third identity in \eqref{eq:3.2}.

It follows from the first equality in \eqref{eq:3.2} and the definition of $F$ that
\begin{equation}\label{eq:3.6}
 E(\psi)=\frac12\int_{\R^2}
 \bigl(F+|\partial_y\psi|^2\bigr)\dd\mathbf{x}.
\end{equation}

We next control the right-hand side of \eqref{eq:2.2} by the same integral $I_c(\psi)$.
\begin{lemma}\label{lem:3.1}
There is $C_J$ such that
\begin{equation}\label{eq:3.7}
 \int_{\R^2}|\nabla\partial_y\psi|^2\dd\mathbf{x}
 \le C_J\int_{\R^2}|\partial_y\psi|^2\dd\mathbf{x}.
\end{equation}
Consequently,
\begin{equation}\label{eq:3.8}
 \int_{\R^2}|\partial_{yy}\psi|^2\dd\mathbf{x}
 \le C_J I_c(\psi).
\end{equation}
\end{lemma}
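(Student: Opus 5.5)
Since $\partial_y$ commutes with the equation's coefficients, the plan is to differentiate \eqref{eq:1.2} in $y$ and test the resulting linearized equation against $w:=\partial_y\psi$. Set
\begin{equation*}
 \Delta w+ic\partial_xw+(1-|\psi|^2)w-2\langle\psi,w\rangle\psi=0 .
\end{equation*}
Multiplying by $\overline w$, taking real parts and integrating over $\R^2$, I expect the energy identity $Q_{\psi,c}(w)=0$, that is,
\begin{equation*}
 \int_{\R^2}|\nabla w|^2\dd\mathbf{x}
 =\int_{\R^2}\Bigl(c\langle w,i\partial_xw\rangle+(1-|\psi|^2)|w|^2-2\langle w,\psi\rangle^2\Bigr)\dd\mathbf{x}.
\end{equation*}
The integrations by parts will be justified by a cutoff $\chi(\cdot/R)$. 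I will use finite energy, the decay estimates of \cite{GravejatDecay} (algebraic decay of $\nabla\psi$ and $\nabla^2\psi$), and the universal $C^k$ bounds for finite-energy solutions $\|\psi\|_{C^k}\le C_J$; these bounds are the same ones used to verify \eqref{eq:2.32}. With these, $w\in H^1$ and the boundary terms vanish as $R\to\infty$.

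Next I estimate the right-hand side. The potential term satisfies $(1-|\psi|^2)|w|^2\le C_J|w|^2$, because $\|\psi\|_\infty$ is bounded uniformly for $c\in J$. The term $-2\langle w,\psi\rangle^2$ is nonpositive and can be dropped. The only problematic term is the first-order one, which I control by Young's inequality:
\begin{equation*}
 \Bigl|c\int\langle w,i\partial_xw\rangle\Bigr|\le\frac12\int|\partial_xw|^2+\frac{c^2}{2}\int|w|^2 .
\end{equation*}
Absorbing $\frac12\int|\nabla w|^2$ into the left-hand side gives
\begin{equation*}
 \int|\nabla\partial_y\psi|^2\le(c^2+2C_J)\int|\partial_y\psi|^2 ,
\end{equation*}
which is \eqref{eq:3.7}. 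Then \eqref{eq:3.8} follows immediately, because $|\partial_{yy}\psi|\le|\nabla\partial_y\psi|$ and $\int|\partial_y\psi|^2=I_c(\psi)$ by \eqref{eq:3.2}.

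The main obstacle is the justification step, namely showing $w\in H^1(\R^2)$ with the cutoff errors tending to zero. The cutoff identity produces error terms of the form $R^{-1}\int_{B_{2R}\setminus B_R}|w||\nabla w|$ and $R^{-2}\int|w|^2$. Since $w\in L^2$ by finite energy and $\nabla w$ is bounded, these errors are $o(1)$.

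To avoid assuming $\nabla w\in L^2$ a priori, I will apply the absorption argument to the localized identity. This yields
\begin{equation*}
 \int\chi_R^2|\nabla w|^2\le C\int|w|^2+o(1),
\end{equation*}
where the cutoff gradient terms are again absorbed by Young's inequality. Letting $R\to\infty$ by monotone convergence then gives \eqref{eq:3.7}, without ever needing the decay rates of \cite{GravejatDecay}.
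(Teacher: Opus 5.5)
Your proposal is correct and follows essentially the same route as the paper: differentiate in $y$, test the linearized equation against $\chi_R^2\partial_y\psi$, drop the nonpositive term $-2\langle w,\psi\rangle^2$, absorb the first-order and cutoff terms by Young's inequality, and let $R\to\infty$. Your final localized absorption step is exactly how the paper avoids assuming $\nabla\partial_y\psi\in L^2$ a priori, so the decay estimates from \cite{GravejatDecay} that you first invoke are not needed.
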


\begin{proof}
Put $w=\partial_y\psi$. Differentiating \eqref{eq:1.2} with respect to $y$, we obtain
\begin{equation}\label{eq:3.9}
 ic\partial_xw+\Delta w+(1-|\psi|^2)w
 -2\Rea(\overline\psi w)\psi=0.
\end{equation}
Choose $\chi\in C_c^\infty(\R^2)$ with $0\le\chi\le1$, $\chi=1$ on $B_1$, and $\chi=0$ outside $B_2$, and put $\chi_R(\mathbf{x})=\chi(\mathbf{x}/R)$. Taking the real scalar product of \eqref{eq:3.9} with $\chi_R^2w$ and integrating by parts gives
\begin{align}
 \int\chi_R^2|\nabla w|^2
 &=-2\int\chi_R\langle\nabla w,w\rangle\cdot\nabla\chi_R
   +c\int\chi_R^2\langle i\partial_xw,w\rangle\notag\\
 &\quad+\int\chi_R^2(1-|\psi|^2)|w|^2
   -2\int\chi_R^2\langle w,\psi\rangle^2.
 \label{eq:3.10}
\end{align}
All solutions under consideration satisfy a uniform $L^\infty$ bound \cite[Lemma~2.1]{BellazziniRuiz}. By discarding the last nonpositive term in \eqref{eq:3.10} and using Young's inequality, we infer
\begin{align*}
 \int\chi_R^2|\nabla w|^2
 &\le \frac12\int\chi_R^2|\nabla w|^2
 +C_J\int_{B_{2R}}|w|^2
 +\frac{C}{R^2}\int_{B_{2R}\setminus B_R}|w|^2.
\end{align*}
After absorbing the first term on the right, let $R\to\infty$ and use Fatou's lemma. The result is
\begin{equation*}
 \|\nabla w\|_2^2\le C_J\|w\|_2^2,
\end{equation*}
which is \eqref{eq:3.7}. Finally, \eqref{eq:3.2} implies $\|w\|_2^2=I_c(\psi)$. Combining $|\partial_{yy}\psi|\le|\nabla w|$ with \eqref{eq:3.7} proves \eqref{eq:3.8}.
\end{proof}

\subsection{Control of the one-dimensional negative directions}

All entire solutions with $c\in J$ have uniform $C^k$ bounds by the same universal $L^\infty$ estimate and elliptic regularity \cite[Lemma~2.1]{BellazziniRuiz}. We now compare the one-dimensional tests for the functions $q_y$ as $y$ varies. For fixed $j$, let $f_j(y)$, $d_j(y)$, and $\mathcal R_j(y)$ denote the quantities in \eqref{eq:2.33} evaluated at $q=q_y$. These functions are continuous, because their integrands depend continuously on $(x,y)$ and the integration intervals are bounded. The same holds for $f_{j-1}(y)$ and $f_{j+1}(y)$. The map $y\mapsto q_y$ is also continuous from $\R$ to $C^0(J_j)$. For $y,z\in\R$ and $\phi\in H_0^1(J_j,\C)$ with $\|\phi\|_{L^2(J_j)}=1$,
\begin{equation}\label{eq:3.11}
 |Q_{q_y,c}(\phi)-Q_{q_z,c}(\phi)|
 \le C_J\|q_y-q_z\|_{L^\infty(J_j)}.
\end{equation}
Taking the infimum in \eqref{eq:2.69} in both directions proves that $y\mapsto\lambda_1(q_y,c,J_j)$ is continuous.

Define the indicator
\begin{equation}\label{eq:3.12}
 b_j(y):=
 \begin{cases}
 1,&\text{if $a_j(q_y)=3$},\\
 0,&\text{otherwise}.
 \end{cases}
\end{equation}
From equations \eqref{eq:2.71} and \eqref{eq:2.72}, we obtain the explicit identity
\begin{equation}\label{eq:3.13}
 \{y:b_j(y)=1\}
 =\{y:\mathcal R_j(y)\le\varepsilon f_j(y)\}
 \cap\{y:d_j(y)<\delta f_j(y)\}
 \cap\{y:\lambda_1(q_y,c,J_j)\le-2\kappa\}.
\end{equation}
The continuity of $f_j(y)$, $d_j(y)$, $\mathcal R_j(y)$, and $\lambda_1(q_y,c,J_j)$ makes each set on the right of \eqref{eq:3.13} Borel. Hence $b_j$ is measurable.

\begin{lemma}\label{lem:3.2}
There are $H>1$ and $\tau>0$, depending only on $J$, with the following property. Let $j\in\mathbb Z$ and $y_0\in\R$. Suppose $b_j(y_0)=1$ and
\begin{equation}\label{eq:3.14}
 \int_{y_0-H}^{y_0+H}\int_{J_j}
 |\partial_y\psi(x,y)|^2\dd x\dd y\le\tau.
\end{equation}
Then there is $\Phi\in C_c^\infty(J_j\times(y_0-H,y_0+H),\C)$ such that
\begin{equation}\label{eq:3.15}
 Q_{\psi,c}(\Phi)<0.
\end{equation}
\end{lemma}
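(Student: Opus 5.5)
Take the one-dimensional negative direction at $y_0$ and spread it in $y$ with a cutoff. Since $b_j(y_0)=1$, \eqref{eq:2.71} holds, so there is $\phi\in H_0^1(J_j,\C)$ with $\|\phi\|_2=1$, $\|\phi'\|_2\le C_0$ and $Q^{J_j}_{q_{y_0},c}(\phi)\le-2\kappa$. Mollify and cut off slightly to get $\phi\in C_c^\infty(J_j,\C)$ with $Q_{q_{y_0},c}(\phi)\le-\kappa$. The required closeness is uniform, because $\|\phi'\|_2\le C_0$ and the coefficients of $Q$ are uniformly bounded by \cite[Lemma~2.1]{BellazziniRuiz}. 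Fix $\eta\in C_c^\infty((-1,1),[0,1])$ with $\int\eta^2=1$, and put
\begin{equation*}
 \Phi(x,y)=\eta\Bigl(\frac{y-y_0}{H}\Bigr)\phi(x).
\end{equation*}
Then $\Phi\in C_c^\infty(J_j\times(y_0-H,y_0+H),\C)$.

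Next I compute $Q_{\psi,c}(\Phi)$ directly from \eqref{eq:1.4}. Every term except $|\partial_y\Phi|^2$ factors slice by slice, so
\begin{equation*}
 Q_{\psi,c}(\Phi)=\int_\R\eta_H(y)^2\,Q_{q_y,c}(\phi)\dd y+\frac1{H}\int_\R|\eta'|^2\dd s,
\end{equation*}
where $\eta_H(y)=\eta((y-y_0)/H)$. The last term is $C/H$, and I fix $H$ so that $C/H\le\kappa/4$. By \eqref{eq:3.11} and $\|\phi\|_2=1$,
\begin{equation*}
 Q_{q_y,c}(\phi)\le-\kappa+C_J\|q_y-q_{y_0}\|_{L^\infty(J_j)}.
\end{equation*}
So the task reduces to making $\|q_y-q_{y_0}\|_{L^\infty(J_j)}$ small for $|y-y_0|<H$, with $H$ already fixed.

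This uniform closeness in $y$ is the main step, and the only place \eqref{eq:3.14} is used. The plan is to use the uniform $C^1$ bound on $\psi$ together with the smallness of $\int\!\!\int|\partial_y\psi|^2$ on the rectangle $R=J_j\times(y_0-H,y_0+H)$. The rectangle has fixed size $(2M+1)L\times2H$, depending only on $J$. Applying the interpolation estimate \eqref{eq:2.35} in two dimensions (same proof: on a ball of radius comparable to $\|g\|_\infty/\|\nabla g\|_\infty$ one has $|g|\ge\|g\|_\infty/2$) to $g=\partial_y\psi$ gives
\begin{equation*}
 \|\partial_y\psi\|_{L^\infty(R)}\le C_J\bigl(\tau^{1/2}+\tau^{1/4}\bigr).
\end{equation*}
Here $\|\nabla\partial_y\psi\|_\infty\le C_J$ by the universal $C^2$ bounds. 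Integrating in $y$ then yields
\begin{equation*}
 \|q_y-q_{y_0}\|_{L^\infty(J_j)}\le H\,C_J\bigl(\tau^{1/2}+\tau^{1/4}\bigr).
\end{equation*}
Choosing $\tau$ small, depending on $H$, $\kappa$ and $C_J$, makes this at most $\kappa/(4C_J)$. Hence $Q_{q_y,c}(\phi)\le-3\kappa/4$ on the support of $\eta_H$, and
\begin{equation*}
 Q_{\psi,c}(\Phi)\le-\tfrac34\kappa H+\tfrac14\kappa<0,
\end{equation*}
since $H>1$. If the interpolation step turns out delicate near the boundary of the rectangle, I would instead use the Cauchy--Schwarz bound
\begin{equation*}
 |q_y(x)-q_{y_0}(x)|^2\le H\int|\partial_y\psi(x,s)|^2\dd s.
\end{equation*}
This gives $L^2(J_j)$ smallness of $q_y-q_{y_0}$. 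Because the potential terms in $Q$ are quadratic, I would then interpolate $L^2$ against the uniform Lipschitz bound in $x$ to recover the $L^\infty$ control needed in \eqref{eq:3.11}.
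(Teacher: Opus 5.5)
Your proof is correct and has the same architecture as the paper's. Both use the tensor product of the slice eigenfunction with a wide $y$-cutoff, and the Fubini identity $Q_{\psi,c}(\Phi)=\int\eta_H^2\,Q_{q_y,c}(\phi)\dd y+H^{-1}\|\eta'\|_2^2\|\phi\|_2^2$. In both, $H$ is fixed first to make the transverse-gradient cost small relative to $\kappa$, and $\tau$ is then chosen depending on $H$.

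The real difference is in how $Q_{q_y,c}(\phi)-Q_{q_{y_0},c}(\phi)$ is controlled.

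\begin{itemize}
\item The paper stays in $L^2$. Cauchy--Schwarz in $y$ gives $\|q_y-q_{y_0}\|_{L^2(J_j)}\le(H\tau)^{1/2}$. The zeroth-order difference is then paired against $\|\phi\|_{L^4}^2\le C\|\phi\|_2(\|\phi'\|_2+\|\phi\|_2)$. This is exactly where the gradient bound $\|\phi'\|_2\le C_0$ from \eqref{eq:2.70} is needed.
\item You upgrade to pointwise smallness, $\|\partial_y\psi\|_{L^\infty(R)}\lesssim\tau^{1/2}+\tau^{1/4}$, by two-dimensional interpolation against the universal $C^2$ bound on $\psi$. After that, \eqref{eq:3.11} needs only $\|\phi\|_2=1$.
\end{itemize}

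What your route buys is that the $H^1$ bound on the eigenfunction becomes irrelevant, and so does your remark about uniformity of the smoothing. The price is relying on higher regularity of $\psi$ and getting a worse power of $\tau$; neither matters here.

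Your concern about the boundary of the rectangle is unfounded. Both sides of $R$ exceed $2$: the $x$-side has length $(2M+1)L>5$ and the $y$-side is $2H>2$. So for $r\le1$, every ball $B(p,r)$ with $p\in\overline R$ contains a quarter-disk lying in $R$, and the interpolation constant depends only on $J$.

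Your fallback is also valid: Cauchy--Schwarz in $y$, then \eqref{eq:2.35} in $x$ using the uniform Lipschitz bound on $q_y-q_{y_0}$. It is the variant closest to the paper. It differs only in putting the extra integrability on $q_y-q_{y_0}$ rather than on $\phi$.

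Smoothing $\phi$ at the outset, instead of approximating $\Phi$ in $H^1$ at the end as the paper does, is immaterial.
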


\begin{proof}
Assume \eqref{eq:3.14}, and choose the normalized minimizing Dirichlet test $\phi$ for $Q_{q_{y_0},c}$, which satisfies \eqref{eq:2.67} with $q=q_{y_0}$. For $|y-y_0|\le H$, only the zeroth-order coefficients involving $\psi$ change with $y$, and their difference is bounded by $C_J|q_y-q_{y_0}|$. We have
\begin{align}
 &|Q_{q_y,c}(\phi)-Q_{q_{y_0},c}(\phi)| \le
 C_J\|q_y-q_{y_0}\|_{L^2(J_j)}\|\phi\|_{L^4(J_j)}^2
 \le C_J\|q_y-q_{y_0}\|_{L^2(J_j)}.
 \label{eq:3.16}
\end{align}
The second inequality in \eqref{eq:3.16} uses
\begin{equation*}
 \|\phi\|_4^2\le
 C\|\phi\|_2(\|\phi'\|_2+\|\phi\|_2)\le C_J.
\end{equation*}
Also,
\begin{equation*}
 q_y(x)-q_{y_0}(x)
 =\int_{y_0}^y\partial_y\psi(x,s)\dd s,
\end{equation*}
so Cauchy--Schwarz yields
\begin{equation}\label{eq:3.17}
 \|q_y-q_{y_0}\|_{L^2(J_j)}
 \le\left(
 |y-y_0|\int_{y_0-H}^{y_0+H}\int_{J_j}
 |\partial_y\psi|^2
 \right)^{1/2}.
\end{equation}

Choose a fixed nonzero $\chi_0\in C_c^\infty(-1,1)$ and take $H$ so large that
\begin{equation*}
 \int\left|\frac1H\chi_0'\!\left(\frac{y-y_0}{H}\right)\right|^2\dd y
 \le\frac\kappa4\int\left|\chi_0\!\left(
 \frac{y-y_0}{H}\right)\right|^2\dd y.
\end{equation*}
The ratio of the derivative integral to the undifferentiated integral is $H^{-2}\|\chi_0'\|_2^2/\|\chi_0\|_2^2$, so this choice of $H$ is possible. Next choose $\tau$ so small that $C_J(H\tau)^{1/2}\le\kappa/2$. Equations \eqref{eq:3.16}--\eqref{eq:3.17} and \eqref{eq:2.67} then imply $Q_{q_y,c}(\phi)\le-3\kappa/2$ whenever $\chi_0((y-y_0)/H)\ne0$. For $\Phi(x,y)=\phi(x)\chi_0((y-y_0)/H)$, it follows from Fubini's theorem that
\begin{align*}
 Q_{\psi,c}(\Phi)
 &=\int\chi_0\!\left(\frac{y-y_0}{H}\right)^2
 Q_{q_y,c}(\phi)\dd y\\
 &\quad+\int\left|\frac1H\chi_0'\!\left(
 \frac{y-y_0}{H}\right)\right|^2\dd y
 \int|\phi|^2\dd x\\
 &\le-\frac{5\kappa}{4}\int\chi_0\!\left(
 \frac{y-y_0}{H}\right)^2\dd y<0.
\end{align*}
If the minimizing Dirichlet test is initially only in $H_0^1(J_j,\C)$, choose $\phi_n\in C_c^\infty(J_j,\C)$ with $\phi_n\to\phi$ in $H^1$. Then
\begin{equation*}
 Q_{\psi,c}\!\left(\phi_n(x)\chi_0\!\left(
 \frac{y-y_0}{H}\right)\right)\longrightarrow
 Q_{\psi,c}(\Phi)<0,
\end{equation*}
so the displayed product with $\phi_n$ is the smooth test required in \eqref{eq:3.15} for all sufficiently large $n$.
\end{proof}

\begin{proposition}\label{prop:3.3}
If $\ind(\psi)<\infty$, then
\begin{equation}\label{eq:3.18}
 \int_\R\sum_{j\in\mathbb Z}b_j(y)\dd y
 \le C_J\left(\ind(\psi)+
 \int_{\R^2}|\partial_y\psi|^2\dd\mathbf{x}\right).
\end{equation}
\end{proposition}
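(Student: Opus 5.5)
We split the set $\{(j,y): b_j(y)=1\}$ according to whether the transverse energy near $(j,y)$ is small or large, in the sense of Lemma~\ref{lem:3.2}. Fix $H,\tau$ from Lemma~\ref{lem:3.2}. For $j\in\mathbb Z$ and $y\in\R$ put
\begin{equation*}
 e_j(y):=\int_{y-H}^{y+H}\int_{J_j}|\partial_y\psi|^2\dd x\dd s .
\end{equation*}
This function is continuous in $y$. Call $(j,y)$ \emph{heavy} if $e_j(y)>\tau$ and \emph{light} otherwise. For heavy pairs we use Chebyshev and Fubini:
\begin{equation*}
 \int_\R\sum_j \mathbf 1_{\{e_j>\tau\}}\dd y\le\frac1\tau\sum_j\int_\R e_j(y)\dd y
 =\frac{2H}{\tau}\sum_j\int_{\R}\int_{J_j}|\partial_y\psi|^2
 \le\frac{2H(2M+1)}{\tau}\int_{\R^2}|\partial_y\psi|^2 .
\end{equation*}
The last step uses that the intervals $J_j$ have overlap multiplicity $2M+1$. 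This gives the second term of \eqref{eq:3.18}.

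For light pairs, each $(j,y)$ with $b_j(y)=1$ and $e_j(y)\le\tau$ yields, by Lemma~\ref{lem:3.2}, a test $\Phi_{j,y}$ supported in the rectangle $J_j\times(y-H,y+H)$ with $Q_{\psi,c}(\Phi_{j,y})<0$. Tests with pairwise disjoint supports span a space on which $Q_{\psi,c}$ is negative definite, because the form is local and $Q_{\psi,c}(\sum t_k\Phi_k)=\sum t_k^2Q_{\psi,c}(\Phi_k)$. Hence any family of light pairs whose rectangles are pairwise disjoint has cardinality at most $\ind(\psi)$.

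It remains to convert the measure of the light set $S=\{(j,y):b_j(y)=1,\ e_j(y)\le\tau\}$, namely $\int_\R\sum_j\mathbf 1_S(j,y)\dd y$, into a count of disjoint rectangles. Two rectangles $J_j\times(y-H,y+H)$ and $J_{j'}\times(y'-H,y'+H)$ are disjoint whenever $|j-j'|\ge 2M+1$ or $|y-y'|\ge2H$. We therefore partition the index pairs into $(2M+1)\cdot 2$ residue classes: $j\equiv r \pmod{2M+1}$ and $\lfloor y/(2H)\rfloor\equiv s\pmod 2$. Within one class, we choose at most one point $y$ in each strip $[2Hk,2H(k+1))$ with $k\equiv s$ for each admissible $j$. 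The resulting rectangles are pairwise disjoint: different $j$ in the same class are separated by at least $2M+1$ cells, and the same $j$ with different chosen strips are separated by at least $2H$ in $y$. Consequently, the number of pairs $(j,k)$ for which the strip $[2Hk,2H(k+1))$ meets $\{y:(j,y)\in S\}$ is at most $2(2M+1)\ind(\psi)$. Each such pair contributes at most $2H$ to the integral, so
\begin{equation*}
 \int_\R\sum_j\mathbf 1_S(j,y)\dd y\le 4H(2M+1)\ind(\psi).
\end{equation*}
Adding the heavy and light contributions proves \eqref{eq:3.18}. Measurability of both sets follows from \eqref{eq:3.13} and the continuity of $e_j$.

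The main point needing care is the disjointness and packing step. The form must split additively over disjointly supported tests, which holds since $Q_{\psi,c}$ is an integral of local densities. The choice of representatives must also be made so that every strip meeting the light set is counted, and this is achieved by the residue-class partition above. The rest is routine Fubini and Chebyshev.
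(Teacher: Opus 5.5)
Your proof is correct and follows the paper's structure: the heavy pairs are handled by the same Chebyshev--Fubini estimate using the overlap multiplicity $2M+1$ of the intervals $J_j$, and the light pairs are controlled by counting pairwise disjoint rectangles, each of which carries a negative test from Lemma~\ref{lem:3.2}, so their number is at most $\ind(\psi)$. The only difference is the packing step. The paper takes a greedy maximal disjoint family and covers the light set by the enlarged rectangles $\{|j-j_\ell|\le 2M\}\times(y_\ell-2H,y_\ell+2H)$. You instead color the pairs into $2(2M+1)$ residue classes, within each of which the chosen rectangles are automatically pairwise disjoint; this is equally valid and gives the slightly better constant $4H(2M+1)$ in place of $4H(4M+1)$.
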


\begin{proof}
First consider the pairs $(j,y)$ for which $b_j(y)=1$ and
\begin{equation}\label{eq:3.19}
 \int_{y-H}^{y+H}\int_{J_j}|\partial_y\psi(x,s)|^2
 \dd x\dd s>\tau.
\end{equation}
By Chebyshev's inequality and Fubini's theorem, we derive
\begin{align}
 &\sum_j\left|\left\{y:b_j(y)=1,\
 \eqref{eq:3.19}\text{ holds}\right\}\right|\notag\\
 &\quad\le\frac1\tau\sum_j\int_\R
 \left\{\int_{y-H}^{y+H}\int_{J_j}|\partial_y\psi(x,s)|^2
 \dd x\dd s\right\}\dd y\notag\\
 &=\frac{2H}{\tau}\sum_j\int_{J_j\times\R}
 |\partial_y\psi|^2\dd\mathbf{x}\notag\\
 &\le\frac{2H(2M+1)}{\tau}\int_{\R^2}
 |\partial_y\psi|^2\dd\mathbf{x}.
 \label{eq:3.20}
\end{align}

Consider the pairs $(j,y)$ satisfying
\begin{equation*}
 b_j(y)=1,
 \qquad
 \int_{y-H}^{y+H}\int_{J_j}|\partial_y\psi(x,s)|^2
 \dd x\dd s\le\tau.
\end{equation*}
From these pairs $(j,y)$, choose rectangles $J_j\times(y-H,y+H)$ successively, requiring each new rectangle to be disjoint from all those already chosen. Stop when no further disjoint rectangle can be added. This process stops after at most $\ind(\psi)$ choices. Otherwise Lemma~\ref{lem:3.2} would provide $\ind(\psi)+1$ tests with pairwise disjoint supports, and their span would be a negative subspace. We therefore obtain a finite maximal pairwise disjoint family
\begin{equation*}
 J_{j_\ell}\times(y_\ell-H,y_\ell+H),
 \qquad 1\le\ell\le N,
\end{equation*}
of the corresponding rectangles. By Lemma~\ref{lem:3.2}, for each $\ell$ there exists a nonzero test function $\Phi_\ell$, supported in this rectangle, such that $Q_{\psi,c}(\Phi_\ell)<0$. Since these supports are pairwise disjoint,
\begin{equation*}
 Q_{\psi,c}\left(\sum_{\ell=1}^N a_\ell\Phi_\ell\right)
 =\sum_{\ell=1}^N a_\ell^2Q_{\psi,c}(\Phi_\ell)<0
 \qquad
 \text{for every }(a_1,\ldots,a_N)\in\R^N\setminus\{0\}.
\end{equation*}
Thus
\begin{equation*}
 N\le\ind(\psi).
\end{equation*}
By maximality, every rectangle associated with a pair under consideration intersects one of the $N$ selected rectangles. Such an intersection implies
\begin{equation*}
 |j-j_\ell|\le2M,
 \qquad
 |y-y_\ell|<2H
\end{equation*}
for some $\ell$. For each selected rectangle, there are at most $4M+1$ possible integers $j$, and the possible values of $y$ lie in an interval of length $4H$. Since $N\le\ind(\psi)$, we conclude that
\begin{equation}\label{eq:3.21}
 \sum_j\left|\left\{y:b_j(y)=1,\
 \int_{y-H}^{y+H}\int_{J_j}|\partial_y\psi(x,s)|^2
 \dd x\dd s\le\tau\right\}\right|
 \le4H(4M+1)\ind(\psi).
\end{equation}
Combining \eqref{eq:3.20} and \eqref{eq:3.21} proves \eqref{eq:3.18}.
\end{proof}

\subsection{Integration over horizontal lines and proof of
Theorem~\ref{thm:1.1}}

\begin{proposition}
\label{prop:3.4}
For every compact interval $J\subset(0,\sqrt2)$ there is $C_J$ such that every finite-energy solution $\psi$ of \eqref{eq:1.2} with $c\in J$ satisfies
\begin{equation}\label{eq:3.22}
 \int_{\R^2}F\dd\mathbf{x}
 \le C_J\left\{
 \int_{\R^2}D\dd\mathbf{x}
 +\int_{\R^2}|\partial_{yy}\psi|^2\dd\mathbf{x}
 +\ind(\psi)\right\}.
\end{equation}
\end{proposition}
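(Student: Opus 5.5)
We apply Proposition~\ref{prop:2.10} on each horizontal line and then integrate in $y$. First we handle the trivial case: if $\ind(\psi)=\infty$ there is nothing to prove, so assume $\ind(\psi)<\infty$.

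\emph{Step 1: the slices satisfy the hypotheses of Proposition~\ref{prop:2.10}.} For every $y$, the slice $q_y$ satisfies \eqref{eq:2.32}, with $M_J$ fixed once and for all from the universal $C^k$ bounds of \cite[Lemma~2.1]{BellazziniRuiz}. The integrability conditions hold only for almost every $y$. Since $\psi$ has finite energy, $\int_\R F_{q_y}\dd x<\infty$ for a.e.\ $y$, by Fubini applied to $\int_{\R^2}F\le 2E(\psi)$. By \eqref{eq:2.5}, $\|R_c(q_y)\|_{L^2(\R)}^2=\int_\R|\partial_{yy}\psi(x,y)|^2\dd x$. This is finite for a.e.\ $y$ by Lemma~\ref{lem:3.1}, since $I_c(\psi)<\infty$. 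So Proposition~\ref{prop:2.10} applies on a full-measure set of slices, giving
\begin{equation*}
 \int_\R F(x,y)\dd x
 \le C_*\int_\R D(x,y)\dd x
 +C_J\Bigl\{\int_\R|\partial_{yy}\psi(x,y)|^2\dd x+\sum_{j}b_j(y)\Bigr\},
\end{equation*}
where $C_*=\frac{1+2/(K-2)}{\delta-2/(K-2)}$ depends only on $J$, and we have used \eqref{eq:3.12} to write $\#\{j:a_j(q_y)=3\}=\sum_j b_j(y)$.

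\emph{Step 2: integrate in $y$.} Each term is measurable in $y$: $b_j$ is measurable by \eqref{eq:3.13}, and the others are Fubini integrals. Integrating over $y\in\R$ is legitimate because $|D|\le F\in L^1(\R^2)$, so $y\mapsto\int D(x,y)\dd x$ is integrable and there is no sign issue. We obtain
\begin{equation*}
 \int_{\R^2}F
 \le C_*\int_{\R^2}D+C_J\int_{\R^2}|\partial_{yy}\psi|^2+C_J\int_\R\sum_j b_j(y)\dd y.
\end{equation*}

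\emph{Step 3: bound the count of negative cells.} Proposition~\ref{prop:3.3} gives $\int_\R\sum_jb_j\le C_J(\ind(\psi)+\int|\partial_y\psi|^2)$. By \eqref{eq:3.2}, $\int|\partial_y\psi|^2=\int_{\R^2}D$, so this extra term is absorbed into the $D$ term. Note that $\int D=I_c\ge0$. Collecting constants yields \eqref{eq:3.22}.

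The only delicate point is Step 1: the a.e.\ validity of the hypotheses, and making sure that the constants in Proposition~\ref{prop:2.10} and Lemma~\ref{lem:2.9} depend only on $J$ once $M_J$ is fixed. With that settled, the argument is a direct assembly of the earlier results.
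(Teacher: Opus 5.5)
Your proposal is correct and follows the paper's proof step for step. You apply Proposition~\ref{prop:2.10} on the full-measure set of slices where $F_{q_y}\in L^1(\R)$ and $\partial_{yy}\psi(\cdot,y)\in L^2(\R)$, integrate in $y$ using $|D|\le F$ and Tonelli, and then absorb $\int_\R\sum_j b_j(y)\dd y$ via Proposition~\ref{prop:3.3} and $\int_{\R^2}|\partial_y\psi|^2=\int_{\R^2}D\ge0$. The paper also records, via Gravejat's decay estimates, that $q_y\to1$ as $x\to\pm\infty$; this is not a hypothesis of Proposition~\ref{prop:2.10}, so omitting it is harmless.
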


\begin{proof}
If $\ind(\psi)=\infty$, the right-hand side of \eqref{eq:3.22} is infinite. Hence assume $\ind(\psi)<\infty$. By finite energy, Lemma~\ref{lem:3.1}, and Fubini's theorem, for almost every $y\in\R$,
\begin{equation}\label{eq:3.23}
 \int_\R F_{q_y}\dd x<\infty,
 \qquad
 \int_\R|\partial_{yy}\psi(x,y)|^2\dd x<\infty.
\end{equation}
Here the first assertion follows directly from
\begin{equation}\label{eq:3.24}
 \int_{\R^2}F\dd\mathbf{x}\le2E(\psi)<\infty,
\end{equation}
and the second is a consequence of \eqref{eq:3.7}.

Multiplication by a constant of modulus one does not change any quantity in this proof, so normalize the limit of $\psi$ at infinity to be $1$. The precise estimates from \cite[Theorem~11 and Proposition~33]{GravejatDecay}, specialized to two dimensions, state that there is a constant $C_\psi>0$ such that
\begin{equation}\label{eq:3.25}
 |\psi(\mathbf{x})-1|\le\frac{C_\psi}{|\mathbf{x}|},
 \qquad
 |\nabla\psi(\mathbf{x})|\le\frac{C_\psi}{|\mathbf{x}|^2}
 \qquad(|\mathbf{x}|\ge1).
\end{equation}
For a fixed $y$, $|\mathbf{x}|=(x^2+y^2)^{1/2}\to\infty$ as $x\to\pm\infty$. Hence, we conclude from \eqref{eq:2.1} and \eqref{eq:3.25} that
\begin{equation*}
 \lim_{x\to\pm\infty}q_y(x)=1,
 \qquad
 \lim_{x\to\pm\infty}q_y'(x)=0.
\end{equation*}
The uniform regularity estimates imply \eqref{eq:2.32}. Therefore Proposition~\ref{prop:2.10} applies for every $y$ satisfying \eqref{eq:3.23}.

We integrate \eqref{eq:2.73} with respect to $y$. First, \eqref{eq:3.1} gives $|D|\le F$. It follows from \eqref{eq:3.24} that
\begin{equation}\label{eq:3.26}
 \int_{\R^2}|D|\dd\mathbf{x}<\infty,
 \qquad
 \int_\R\left\{\int_\R D_{q_y}(x)\dd x\right\}\dd y
 =\int_{\R^2}D(\mathbf{x})\dd\mathbf{x}.
\end{equation}
Second, we infer from \eqref{eq:2.5} and Tonelli's theorem that
\begin{equation*}
 \int_\R\int_\R|R_c(q_y)(x)|^2\dd x\dd y
 =\int_{\R^2}|\partial_{yy}\psi(\mathbf{x})|^2\dd\mathbf{x}.
\end{equation*}
Third, since $b_j(y)$ is the indicator defined in \eqref{eq:3.12},
\begin{equation}\label{eq:3.27}
 \int_\R\#\{j:b_j(y)=1\}\dd y
 =\int_\R\sum_{j\in\mathbb Z}b_j(y)\dd y.
\end{equation}
Integrating \eqref{eq:2.73} and using \eqref{eq:3.26}--\eqref{eq:3.27}, we obtain
\begin{align}
 \int_{\R^2}F\dd\mathbf{x}
 &\le \frac{1+2/(K-2)}{\delta-2/(K-2)}
 \int_{\R^2}D\dd\mathbf{x}
 {}+C_J\left\{
 \int_{\R^2}|\partial_{yy}\psi|^2\dd\mathbf{x}+
 \int_\R\sum_jb_j(y)\dd y\right\}.
 \label{eq:3.28}
\end{align}
Proposition~\ref{prop:3.3}, together with the first equality in \eqref{eq:3.2}, yields
\begin{equation}\label{eq:3.29}
 \int_\R\sum_jb_j(y)\dd y
 \le C_J\left(\ind(\psi)+
 \int_{\R^2}|\partial_y\psi|^2\dd\mathbf{x}\right).
\end{equation}
Substitute \eqref{eq:3.29} into \eqref{eq:3.28}. Then use \eqref{eq:3.2}, namely $\int_{\R^2}D\dd\mathbf{x} =\int_{\R^2}|\partial_y\psi|^2\dd\mathbf{x}\ge0$. The resulting inequality is exactly \eqref{eq:3.22}.
\end{proof}

\begin{proof}[Proof of Theorem~\ref{thm:1.1}]
By \eqref{eq:3.2} and Lemma~\ref{lem:3.1},
\begin{equation*}
 \int_{\R^2}D\dd\mathbf{x}
 =\int_{\R^2}|\partial_y\psi|^2\dd\mathbf{x}=I_c(\psi),
 \qquad
 \int_{\R^2}|\partial_{yy}\psi|^2\dd\mathbf{x}\le C_JI_c(\psi).
\end{equation*}
By Proposition~\ref{prop:3.4},
\begin{equation}\label{eq:3.30}
 \int_{\R^2}F\dd\mathbf{x}
 \le C_J\bigl(I_c(\psi)+\ind(\psi)\bigr).
\end{equation}
We conclude from equations \eqref{eq:3.30} and \eqref{eq:3.6} that \eqref{eq:1.6} holds.
\end{proof}

\subsection{Proof of Theorem~\ref{thm:1.2}}

We also prove here the upper bound in \eqref{eq:1.17}. Fix a compact interval $J\subset(0,\sqrt2)$ and choose a compact interval $\widehat J\subset(0,\sqrt2)$ whose interior contains $J$. Given $c\in J$, choose $c_n\to c$ from the full-measure set of speeds in \cite[Theorem~1.1]{BellazziniRuiz}, with every $c_n\in\widehat J$, and let $u_n$ be the corresponding waves. The uniform estimate in that theorem, applied on the fixed interval $\widehat J$, provides a constant $A_J$ such that
\begin{equation}\label{eq:3.31}
 0<I_{c_n}(u_n)\le A_J,
 \qquad \ind(u_n)\le1.
\end{equation}
Theorem~\ref{thm:1.1}, again on $\widehat J$, implies that $E(u_n)$ is bounded. By \cite[Proposition~6.1]{BellazziniRuiz}, after translations and passage to a subsequence, we relabel the transformed sequence as $u_n$ and obtain
\begin{equation*}
 u_n\longrightarrow u
 \qquad\hbox{locally together with all derivatives},
\end{equation*}
where $u$ is a solution at speed $c$. Fatou's lemma implies $E(u)<\infty$. The argument in \cite[Proposition~6.1]{BellazziniRuiz} also shows that $|u(0)|\ne1$. A constant solution is either zero or has modulus one, and the constant zero has infinite energy. Hence $u$ is nonconstant.

To verify the index bound, suppose instead that $\ind(u)\ge2$. There is a two-dimensional real subspace $Y\subset C_c^\infty(\R^2,\C)$ on which $Q_{u,c}$ is negative definite. Compactness of the $H^1$-unit sphere of $Y$ yields $\eta>0$ such that
\begin{equation}\label{eq:3.32}
 Q_{u,c}(\phi)\le-\eta
 \qquad(\phi\in Y,\ \|\phi\|_{H^1}=1).
\end{equation}
All elements of $Y$ have support in one compact set, so local smooth convergence and $c_n\to c$ imply uniform convergence of the quadratic-form matrices on $Y$. It follows from \eqref{eq:3.32} that, for large $n$, the form $Q_{u_n,c_n}$ is negative on $Y\setminus\{0\}$, contradicting \eqref{eq:3.31}. Hence $\ind(u)\le1$. By \cite[Theorem~9]{BethuelGravejatSautSurvey}, we may multiply $u$ by a constant of modulus one so that it satisfies \eqref{eq: normalize}. We continue to denote the resulting solution by $u$. This operation leaves the index and $\int_{\R^2}|\partial_yu|^2\dd\mathbf{x}$ unchanged. Finally, we infer from the first identity in \eqref{eq:3.2} and Fatou's lemma that
\begin{equation}\label{eq:3.33}
 I_c(u)=\int_{\R^2}|\partial_yu|^2\dd\mathbf{x}
 \le\liminf_{n\to\infty}\int_{\R^2}|\partial_yu_n|^2\dd\mathbf{x}
 =\liminf_{n\to\infty}I_{c_n}(u_n)\le A_J.
\end{equation}
Thus $u$ is a nonconstant finite-energy solution at the prescribed speed $c$, with $\ind(u)\le1$. It is also admissible in \eqref{eq:1.16}, and $\beta_1(c)\le A_J$. Since $c\in J$ was arbitrary, this proves Theorem~\ref{thm:1.2} and the upper bound in \eqref{eq:1.17}.

\section{Compactness and minimization}

\subsection{Finite-bubble compactness}

We prove Theorem~\ref{thm:1.3} by selecting finitely many profiles as in the concentration--compactness principle \cite{LionsConcentration}. The estimate which excludes energy between the selected profiles is proved in Lemma~\ref{lem:4.3}. We use throughout the fact that finite-energy solutions with speed in a fixed compact subset of $(0,\sqrt2)$ have uniform bounds for every derivative. The constants depend only on the derivative order and the compact speed interval, and are independent of the solution and the spatial point. This follows from the universal bound in \cite[Lemma~2.1]{BellazziniRuiz} and local elliptic estimates.

We begin with the small-amplitude estimate used both in the profile selection and in the proof that the minimizing action is bounded away from zero.

B\'ethuel, Gravejat, and Saut \cite[Proposition~2.4]{BethuelGravejatSaut} proved a quantitative modulus gap, and Lemma~\ref{lem:4.1} establishes the uniform modulus gap used in this paper.

\begin{lemma}\label{lem:4.1}
For every compact interval $J\subset(0,\sqrt2)$ there is $\eta_J>0$ such that every finite-energy solution $\psi$ of \eqref{eq:1.2} with $c\in J$ satisfying
\begin{equation}\label{eq:4.1}
 \|1-|\psi|\|_{L^\infty(\R^2)}\le\eta_J,
\end{equation}
is constant of modulus one.
\end{lemma}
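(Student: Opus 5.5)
Under the assumption \eqref{eq:4.1} with $\eta_J<1$, the solution $\psi$ never vanishes. We therefore lift it, writing $\psi=\rho\e^{i\theta}$ with $\rho=|\psi|$ and a smooth real phase $\theta$ on $\R^2$. This is possible because $\R^2$ is simply connected. By the uniform derivative bounds, $\nabla\theta$ and $\nabla\rho$ are bounded. Set $\eta=1-\rho^2$, so that $\|\eta\|_\infty\le 3\eta_J$.

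The equation splits into a real and an imaginary part:
\begin{equation*}
 \operatorname{div}(\rho^2\nabla\theta)=-\tfrac c2\partial_x\eta,
 \qquad
 \Delta\rho-\rho|\nabla\theta|^2-c\rho\partial_x\theta+\rho\eta=0 .
\end{equation*}
The argument is the standard small-amplitude coercivity estimate, as in \cite[Proposition~2.4]{BethuelGravejatSaut}. The plan has three steps.

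First, we establish the integral identities. Multiply the real equation by $\eta/\rho$, or equivalently combine it with the Pohozaev identities \eqref{eq:3.2}. Multiply the phase equation by $\theta$, using suitable cutoffs and the decay \eqref{eq:3.25}, which makes $\nabla\theta\in L^2$ and $\eta\in L^2$. This yields
\begin{equation*}
 \int\rho^2|\nabla\theta|^2=\tfrac c2\int\eta\,\partial_x\theta ,
\end{equation*}
together with an identity of the form
\begin{equation*}
 \int\Bigl(\rho^2\eta+|\nabla\rho|^2\cdots\Bigr)
 =\int\bigl(\rho^2|\nabla\theta|^2+c\rho^2\partial_x\theta\bigr)+\text{cubic terms}.
\end{equation*}

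Second, we combine these to get a coercive quadratic inequality. The expected outcome is
\begin{equation*}
 \int\Bigl(|\nabla\rho|^2+\tfrac12\eta^2+|\nabla\theta|^2\Bigr)
 -c\int\eta\,\partial_x\theta
 \le C\eta_J\int\bigl(|\nabla\rho|^2+\eta^2+|\nabla\theta|^2\bigr).
\end{equation*}
The quadratic form $\tfrac12\eta^2+|\nabla\theta|^2-c\,\eta\,\partial_x\theta$ is positive definite precisely when $c^2<2$. Its discriminant is proportional to $2-c^2$, uniformly for $c\in J$, which is the same discriminant $\tfrac12(2-c^2)^2$ that appeared in the proof of Lemma~\ref{lem:2.4}. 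Hence the left side is at least $\mu_J\int\bigl(|\nabla\rho|^2+\eta^2+|\nabla\theta|^2\bigr)$ for some $\mu_J>0$. Choosing $\eta_J$ so that $C\eta_J<\mu_J$, we conclude that $\nabla\rho=\nabla\theta=0$ and $\eta=0$. Thus $\psi$ is constant of modulus one.

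Concretely, I would derive the inequality in this step from the energy-momentum relation. For $\rho$ close to $1$, the momentum can be written as
\begin{equation*}
 P(\psi)=\tfrac12\int\eta\,\partial_x\theta+\text{boundary-free corrections}.
\end{equation*}
Using \eqref{eq:3.2}, we have $cP=\tfrac12\int\eta^2$ and $I_c=\int|\partial_y\psi|^2$. Expressing $E$ in the variables $\rho,\theta$ then gives
\begin{equation*}
 E-cP\ \ge\ \mu_J E-C\eta_J E.
\end{equation*}
We compare this with $E=\tfrac12\int F+\tfrac12 I_c$ and $\int D=I_c$. In particular, $\int D=\int|\partial_y\psi|^2$ forces $\int|\partial_x\psi|^2\ge\tfrac12\int\eta^2$, and inserting this bound closes the estimate.

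The main obstacle is the justification of the integrations by parts on all of $\R^2$ in the first step. The phase $\theta$ need not be in $L^2$, so the test functions must be chosen so that only $\nabla\theta$ and $\eta$ appear. The decay $|\nabla\psi|\le C_\psi|\mathbf x|^{-2}$ from \eqref{eq:3.25} handles the boundary terms on large circles, since they are $O(R^{-1})$. The constants, however, must not depend on $C_\psi$, only on $J$. I would handle this by first proving the identities with cutoffs $\chi_R$, letting $R\to\infty$ for each fixed $\psi$, and only then applying the uniform coercivity.
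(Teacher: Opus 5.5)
Your architecture is the paper's. You lift $\psi=\rho\e^{i\theta}$, test the modulus equation and the divergence-form phase equation, add, use the positivity of $\tfrac12\eta^2+|\partial_x\theta|^2-c\,\eta\,\partial_x\theta$ for $c<\sqrt2$, and absorb the cubic terms. This form is the paper's $2s^2+|\partial_x\theta|^2+2cs\partial_x\theta$ with $s=\rho-1$ and $\eta\approx-2s$. Your phase identity $\int\rho^2|\nabla\theta|^2=\frac c2\int\eta\,\partial_x\theta$ is exactly \eqref{eq:4.10}. Justifying it with Gravejat's decay is legitimate: normalize $\theta\to0$ at infinity, so that $|\theta|\le C_\psi/|\mathbf x|$. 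The paper instead tests with $\zeta_S^2(\theta-\theta_S)$ and uses the annular Poincar\'e inequality, so it needs only finite energy. You are right that the identities are exact, so $C_\psi$ never enters the constants.

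The gap is the second identity, on which everything rests. Step~1 leaves it as ``an identity of the form $\cdots$'', and the derivation you call concrete does not close. By \eqref{eq:3.2}, $E-cP=I_c$ and $E=I_c+\frac12\int\eta^2$. So your coercive bound only says $I_c\ge(\mu_J-C\eta_J)\bigl(I_c+\tfrac12\int\eta^2\bigr)$, which is no contradiction. The bound $\int|\partial_x\psi|^2\ge\frac12\int\eta^2$ is just $I_c\ge0$. What is needed is an \emph{upper} bound on the coercive form. The multiplier you mention supplies it, and it is not equivalent to using \eqref{eq:3.2}. Testing the real equation with $\eta/\rho=\rho^{-1}-\rho$ gives exactly
\begin{equation*}
 \int\bigl\{(1+\rho^{-2})|\nabla\rho|^2+\eta^2-c\,\eta\,\partial_x\theta\bigr\}
 =\int\eta\,|\nabla\theta|^2 .
\end{equation*}
Half of this plus the phase identity yields
\begin{equation*}
 \int\Bigl\{\tfrac{1+\rho^{-2}}{2}|\nabla\rho|^2+|\nabla\theta|^2
 +\tfrac12\eta^2-c\,\eta\,\partial_x\theta\Bigr\}
 =\tfrac32\int\eta\,|\nabla\theta|^2
 \le\tfrac92\eta_J\int|\nabla\theta|^2 .
\end{equation*}
Since $\tfrac12\eta^2+|\partial_x\theta|^2-c\,\eta\,\partial_x\theta\ge(1-\frac c{\sqrt2})(\tfrac12\eta^2+|\partial_x\theta|^2)$, any $\eta_J<\frac29(1-\max J/\sqrt2)$ forces $\nabla\theta=0$, $\eta=0$, and $\nabla\rho=0$. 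This identity has a single cubic term, so it is slightly cleaner than the paper's test with $s$. Its cutoff error is $\frac CR\|\eta\|_{L^2(B_{2R}\setminus B_R)}\|\nabla\rho\|_{L^2(B_{2R}\setminus B_R)}$, which vanishes by finite energy alone.

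Your energy--momentum idea can also be repaired by adding the phase identity. From $\frac12\|\eta\|_2^2=cP=\frac c2\int\eta\,\partial_x\theta$ you get $\|\eta\|_2\le c\|\partial_x\theta\|_2$, hence $(1-3\eta_J)\|\nabla\theta\|_2^2\le\frac12\|\eta\|_2^2\le\frac{c^2}2\|\nabla\theta\|_2^2$. This route, however, requires proving $P=\frac12\int\eta\,\partial_x\theta$ from the definition of $P$.

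Finally, your phase equation has a sign error: it should read $\operatorname{div}(\rho^2\nabla\theta)=+\frac c2\partial_x\eta$. The identity you state afterwards is the one that follows from the correct sign. Also, the determinant of your quadratic form is $\frac{2-c^2}{4}$, not the one from Lemma~\ref{lem:2.4}.
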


\begin{proof}
Choose initially $0<\eta_J<1/2$. Then \eqref{eq:4.1} implies $|\psi|\ge1/2$. Since $\R^2$ is simply connected, there are real-valued functions $s$ and $\theta$ such that
\begin{equation}\label{eq:4.2}
 \psi=(1+s)\e^{i\theta},\qquad
 \|s\|_\infty\le\eta_J,\qquad 1+s\ge\frac12.
\end{equation}
Here $1+s=|\psi|$, while $\theta$ is a real phase. By directly differentiating \eqref{eq:4.2}, we derive
\begin{align*}
 \nabla\psi&=\e^{i\theta}
 \{\nabla s+i(1+s)\nabla\theta\},\\
 \Delta\psi&=\e^{i\theta}\{\Delta s-(1+s)|\nabla\theta|^2
 +i[2\nabla s\cdot\nabla\theta+(1+s)\Delta\theta]\},\\
 ic\partial_x\psi&=\e^{i\theta}
 \{ic\partial_xs-c(1+s)\partial_x\theta\},\\
 (1-|\psi|^2)\psi&=\e^{i\theta}(-2s-3s^2-s^3).
\end{align*}
After dividing \eqref{eq:1.2} by $\e^{i\theta}$, the resulting modulus and phase equations are
\begin{align}
 \Delta s-c(1+s)\partial_x\theta
 -(1+s)|\nabla\theta|^2-2s-3s^2-s^3&=0,
 \label{eq:4.3}\\
 \operatorname{div}\bigl((1+s)^2\nabla\theta\bigr)
 +\frac c2\partial_x(1+s)^2&=0.
 \label{eq:4.4}
\end{align}
The imaginary part is $c\partial_xs+2\nabla s\cdot\nabla\theta+(1+s)\Delta\theta=0$. Multiplying it by $1+s$ gives \eqref{eq:4.4}. The same differentiation also yields the exact energy formula
\begin{equation*}
 e(\psi)=\frac12\{|\nabla s|^2+(1+s)^2|\nabla\theta|^2\}
 +s^2\left(1+\frac{s}{2}\right)^2.
\end{equation*}
Let $\zeta_S\in C_c^\infty(\R^2)$ satisfy
\begin{equation}\label{eq:4.5}
 0\le\zeta_S\le1,\quad
 \zeta_S=1\text{ on }B_S,\quad
 \zeta_S=0\text{ on }\R^2\setminus B_{2S},\quad
 |\nabla\zeta_S|\le C/S.
\end{equation}
Let
\begin{equation*}
 \theta_S=\frac{1}{|B_{2S}\setminus B_S|}
 \int_{B_{2S}\setminus B_S}\theta\dd\mathbf{x}.
\end{equation*}
It follows from the scale-invariant Poincar\'e inequality that
\begin{equation}\label{eq:4.6}
 \int_{B_{2S}\setminus B_S}|\nabla\zeta_S|^2
 |\theta-\theta_S|^2\dd\mathbf{x}
 \le C\int_{B_{2S}\setminus B_S}|\nabla\theta|^2\dd\mathbf{x}.
\end{equation}
Because $|s|\le\eta_J<1/2$, the density $e(\psi)$ is bounded above and below by fixed positive multiples of
\begin{equation}\label{eq:4.7}
 |\nabla s|^2+|\nabla\theta|^2+s^2.
\end{equation}
Finite energy therefore implies
\begin{equation}\label{eq:4.8}
 \int_{B_{2S}\setminus B_S}
 (|\nabla s|^2+|\nabla\theta|^2+s^2)\dd\mathbf{x}\longrightarrow0
 \qquad(S\to\infty).
\end{equation}

We first test \eqref{eq:4.3}. Multiplying \eqref{eq:4.3} by $\zeta_S^2s$ and integrating by parts, we obtain the finite-$S$ identity
\begin{align*}
 &\int\zeta_S^2\bigl(|\nabla s|^2+2s^2
       +cs\partial_x\theta\bigr)\\
 &\quad=-2\int\zeta_Ss\nabla\zeta_S\cdot\nabla s
 -\int\zeta_S^2\bigl\{cs^2\partial_x\theta
 +(1+s)s|\nabla\theta|^2+3s^3+s^4\bigr\}.
\end{align*}
The first term on the right is supported in $B_{2S}\setminus B_S$ and tends to zero by Cauchy--Schwarz and \eqref{eq:4.8}. The remaining integrands are absolutely integrable because $s$ is bounded and \eqref{eq:4.7} is integrable. Thus, letting $S\to\infty$, we conclude that
\begin{align}
 &\int_{\R^2}\bigl(
 |\nabla s|^2+2s^2+cs\partial_x\theta\bigr)\dd\mathbf{x}\notag\\
 &\qquad
 =-\int_{\R^2}\bigl(
 cs^2\partial_x\theta+(1+s)s|\nabla\theta|^2
 +3s^3+s^4\bigr)\dd\mathbf{x}.
 \label{eq:4.9}
\end{align}

For the second equation, note that $\partial_x(1+s)^2=\partial_x\{(1+s)^2-1\}$. Testing \eqref{eq:4.4} with $\zeta_S^2(\theta-\theta_S)$, we therefore derive
\begin{align*}
 &\int\zeta_S^2\left\{(1+s)^2|\nabla\theta|^2
 +\frac c2\bigl((1+s)^2-1\bigr)\partial_x\theta\right\}\\
 &\quad=-2\int\zeta_S(\theta-\theta_S)
 \left\{(1+s)^2\nabla\theta
 +\frac c2\bigl((1+s)^2-1\bigr)(1,0)\right\}
 \cdot\nabla\zeta_S.
\end{align*}
By Cauchy--Schwarz, \eqref{eq:4.6}, and \eqref{eq:4.8}, the right-hand side tends to zero. Since $\frac12\{(1+s)^2-1\}=s+\frac12s^2$, the limit is
\begin{equation}\label{eq:4.10}
 \int_{\R^2}\left(
 (1+s)^2|\nabla\theta|^2
 +cs\partial_x\theta+\frac c2s^2\partial_x\theta
 \right)\dd\mathbf{x}=0.
\end{equation}

Adding \eqref{eq:4.9} and \eqref{eq:4.10}, we infer
\begin{align*}
 &\int_{\R^2}\left\{|\nabla s|^2+(1+s)^2|\nabla\theta|^2
 +2s^2+2cs\partial_x\theta\right\}\dd\mathbf{x}\\
 &\quad=-\int_{\R^2}\left\{
 \frac{3c}{2}s^2\partial_x\theta+(1+s)s|\nabla\theta|^2
 +3s^3+s^4\right\}\dd\mathbf{x}.
\end{align*}
For $|s|\le1/2$ the four terms on the last line satisfy, pointwise,
\begin{align*}
 |s^2\partial_x\theta|&\le
 \frac{|s|}{2}\{s^2+|\partial_x\theta|^2\},\\
 |(1+s)s|\nabla\theta|^2|&\le\frac32|s||\nabla\theta|^2,
 \qquad |3s^3+s^4|\le\frac72|s|s^2.
\end{align*}
Using these inequalities in the last equality, we obtain
\begin{align}
 &\int_{\R^2}\left(
 |\nabla s|^2+(1+s)^2|\nabla\theta|^2
 +2s^2+2cs\partial_x\theta\right)\dd\mathbf{x}\notag\\
 &\qquad\le
 C_J\|s\|_\infty
 \int_{\R^2}(|\nabla s|^2+|\nabla\theta|^2+s^2)\dd\mathbf{x}.
 \label{eq:4.11}
\end{align}

For $c\in J$,
\begin{align}
 2s^2+|\partial_x\theta|^2+2cs\partial_x\theta
 &=\left(\sqrt2s+\frac c{\sqrt2}\partial_x\theta\right)^2
 +\left(1-\frac{c^2}{2}\right)|\partial_x\theta|^2\notag\\
 &\ge\left(1-\frac{\max J}{\sqrt2}\right)
 \left(2s^2+|\partial_x\theta|^2\right).
 \label{eq:4.12}
\end{align}
We also have $|(1+s)^2-1|\le(2+\eta_J)\eta_J$. Hence the left-hand side of \eqref{eq:4.11}, using \eqref{eq:4.12}, is bounded below by
\begin{align*}
 &\min\left\{1,1-\frac{\max J}{\sqrt2}\right\}
 \int_{\R^2}(|\nabla s|^2+|\nabla\theta|^2+s^2)\dd\mathbf{x}\\
 &\qquad-C\eta_J
 \int_{\R^2}|\nabla\theta|^2\dd\mathbf{x}.
\end{align*}
Choose $\eta_J>0$ so small that $(C+C_J)\eta_J\le\frac12\min\{1,1-\max J/\sqrt2\}$, where $C$ and $C_J$ are the constants in the preceding lower bound and in \eqref{eq:4.11}. The two error terms are then absorbed into the positive integral, forcing
\begin{equation*}
 \int_{\R^2}(|\nabla s|^2+|\nabla\theta|^2+s^2)\dd\mathbf{x}=0.
\end{equation*}
Thus $s=0$ and $\nabla\theta=0$, which proves the lemma.
\end{proof}

Tarquini \cite{Tarquini} proved a positive energy lower bound for each fixed speed, and Lemma~\ref{lem:4.2} shows that this lower bound is uniform for $c\in J$.

\begin{lemma}
\label{lem:4.2}
For every compact interval $J\subset(0,\sqrt2)$ there is $e_J>0$ such that every nonconstant finite-energy solution $\psi$ with speed in $J$ satisfies
\begin{equation}\label{eq:4.13}
 E(\psi)\ge e_J.
\end{equation}
\end{lemma}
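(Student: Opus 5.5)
The plan is to reduce the statement to Lemma~\ref{lem:4.1}: a small energy should force $\|1-|\psi|\|_{L^\infty}\le\eta_J$, after which the solution must be constant. The reduction uses the uniform derivative bounds for finite-energy solutions with speed in $J$, which come from \cite[Lemma~2.1]{BellazziniRuiz} together with local elliptic estimates. In particular there is $M=M_J$ with $\|\nabla\psi\|_{L^\infty}\le M$ and $\|\psi\|_{L^\infty}\le M$, uniformly over all such solutions.

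Argue pointwise. Suppose at some point $\mathbf{x}_0$ we have $|1-|\psi(\mathbf{x}_0)||>\eta_J$. Then $|1-|\psi(\mathbf{x}_0)|^2|\ge|1-|\psi(\mathbf{x}_0)||>\eta_J$, since $1+|\psi|\ge1$. The function $g=1-|\psi|^2$ is Lipschitz with constant $2M^2$, because $|\nabla|\psi|^2|\le2|\psi||\nabla\psi|$. Set $r=\min\{1,\eta_J/(4M^2)\}$. On the ball $B(\mathbf{x}_0,r)$ this gives $|g|\ge\eta_J/2$. Consequently
\begin{equation*}
 E(\psi)\ge\frac14\int_{B(\mathbf{x}_0,r)}(1-|\psi|^2)^2\dd\mathbf{x}\ge\frac{\pi r^2\eta_J^2}{16}=:e_J.
\end{equation*}
This constant depends only on $J$. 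Hence if $E(\psi)<e_J$, no such point exists, so \eqref{eq:4.1} holds. Lemma~\ref{lem:4.1} then makes $\psi$ constant of modulus one, contradicting nonconstancy. This proves \eqref{eq:4.13}.

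The only point requiring care is that the gradient and $L^\infty$ bounds are genuinely uniform over all finite-energy solutions with $c\in J$, independent of the solution. The paper already records this at the start of Section~4, so I expect no serious obstacle. If uniformity of the gradient bound near the endpoints of $J$ were an issue, I would enlarge $J$ slightly to a compact interval inside $(0,\sqrt2)$ and apply the bounds there.
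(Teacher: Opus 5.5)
Your proposal is correct and follows essentially the paper's argument: the modulus gap from Lemma~\ref{lem:4.1} and the uniform gradient bound give a ball, of radius depending only on $J$, on which $(1-|\psi|^2)^2$ is bounded below, and this yields a uniform lower bound on the energy. The only cosmetic difference is that you use the Lipschitz bound for $|\psi|^2$, which also needs $\|\psi\|_\infty\le M_J$, whereas the paper uses $|\nabla|\psi||\le|\nabla\psi|$ for $|\psi|$; this changes only the value of $e_J$.
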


\begin{proof}
By Lemma~\ref{lem:4.1}, we have the uniform modulus gap
\begin{equation}\label{eq:4.14}
 \|1-|\psi|\|_{L^\infty(\R^2)}>\eta_J
\end{equation}
for every nonconstant wave with $c\in J$. The universal bound in \cite[Lemma~2.1]{BellazziniRuiz} and local elliptic estimates imply, after increasing the constant $M_J$ already used in \eqref{eq:2.32},
\begin{equation*}
 \|\nabla\psi\|_{L^\infty(\R^2)}\le M_J.
\end{equation*}
Choose $\mathbf{x}_0\in\R^2$ with $|1-|\psi(\mathbf{x}_0)||\ge\eta_J/2$. Since $|\nabla|\psi||\le|\nabla\psi|$, for every $\mathbf{x}$ we have
\begin{equation*}
 \bigl||\psi(\mathbf{x})|-|\psi(\mathbf{x}_0)|\bigr|
 \le M_J|\mathbf{x}-\mathbf{x}_0|.
\end{equation*}
Consequently, on $B(\mathbf{x}_0,\eta_J/(4M_J))$ we have
\begin{equation*}
 |1-|\psi(\mathbf{x})||\ge\frac{\eta_J}{4},
 \qquad
 |1-|\psi(\mathbf{x})|^2|\ge\frac{\eta_J}{4}.
\end{equation*}
Therefore
\begin{equation*}
 E(\psi)\ge\frac14\int_{B(\mathbf{x}_0,\eta_J/(4M_J))}
 (1-|\psi|^2)^2\dd\mathbf{x}
 \ge\frac{\pi\eta_J^4}{1024M_J^2}>0,
\end{equation*}
which proves \eqref{eq:4.13}.
\end{proof}

\begin{lemma}
\label{lem:4.3}
Let $J\subset(0,\sqrt2)$ be a compact interval, let $c_n\in J$ satisfy $c_n\to c\in J$, and let $\psi_n$ be normalized solutions of \eqref{eq:1.2} at speed $c_n$ whose energies are uniformly bounded. Suppose that, for a fixed integer $K\ge0$, points $\mathbf{a}_n^1,\ldots,\mathbf{a}_n^K$ satisfy
\begin{equation*}
 |\mathbf{a}_n^k-\mathbf{a}_n^\ell|\longrightarrow\infty\qquad(k\ne\ell),
\end{equation*}
and that every sequence $\mathbf{z}_n$ with $\min_\ell|\mathbf{z}_n-\mathbf{a}_n^\ell|\to\infty$ has only constant functions of modulus one as locally smooth subsequential limits of $\psi_n(\mathbf{z}_n+\cdot)$. When $K=0$, every minimum over $1\le\ell\le K$ is defined to be $+\infty$. Assume also, when $K>0$, that after constant rotations $\psi_n(\mathbf{a}_n^\ell+\cdot)$ converges locally to a normalized finite-energy profile $\psi^\ell$ for every $\ell$. Then
\begin{equation}\label{eq:4.15}
 \lim_{R\to\infty}\limsup_{n\to\infty}
 \int_{\{\mathbf{x}:\min_\ell|\mathbf{x}-\mathbf{a}_n^\ell|>R\}}
 e(\psi_n)\dd\mathbf{x}=0.
\end{equation}
\end{lemma}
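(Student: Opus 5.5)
The plan is to localize the proof of Lemma~\ref{lem:4.1} to the region far from all centers. The boundary terms created by the cutoff live in annuli around the $\mathbf{a}_n^\ell$, and their energy is governed by the tails of the profiles $\psi^\ell$.

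\emph{Step 1: uniform modulus gap far from the centers.} Set $\Omega_{n,R}=\{\mathbf{x}:\min_\ell|\mathbf{x}-\mathbf{a}_n^\ell|>R\}$. I first show that for every $\eta>0$ there are $R_\eta$ and $N_\eta$ with
\begin{equation*}
 \|1-|\psi_n|\|_{L^\infty(\Omega_{n,R})}+\|\nabla\psi_n\|_{L^\infty(\Omega_{n,R})}<\eta
 \qquad(R\ge R_\eta,\ n\ge N_\eta).
\end{equation*}
Suppose this fails for some $\eta$. Then for every $k$ there are infinitely many $n$ and points $\mathbf{z}$ with $\min_\ell|\mathbf{z}-\mathbf{a}_n^\ell|>k$ at which the bound fails. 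A diagonal choice produces $\mathbf{z}_{n_k}$ with $\min_\ell|\mathbf{z}_{n_k}-\mathbf{a}_{n_k}^\ell|\to\infty$. The uniform $C^k$ bounds give a locally smooth subsequential limit of $\psi_{n_k}(\mathbf{z}_{n_k}+\cdot)$. By hypothesis this limit is a constant of modulus one, which contradicts the failure at the origin.

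\emph{Step 2: lifting.} Fix $\eta<1/2$ small, to be chosen as $\eta_J$ in Lemma~\ref{lem:4.1}. For $R\ge R_\eta$ and $n\ge N_\eta$ we have $|\psi_n|\ge1/2$ on $\overline{\Omega_{n,R}}$. This region is not simply connected, but the winding number of $\psi_n$ around each hole is zero. To see this, note that on the circle $|\mathbf{x}-\mathbf{a}_n^\ell|=2R$, and for $R$ large and $n$ large, $\psi_n$ is uniformly close to the constant rotation of $\psi^\ell(\cdot)$, which in turn is close to $1$ there by \eqref{eq: normalize}. The degree at infinity is zero because $\psi_n\to1$. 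Hence $\psi_n=(1+s_n)\e^{i\theta_n}$ on $\Omega_{n,R}$ with $|s_n|\le\eta$, and \eqref{eq:4.3}--\eqref{eq:4.4} hold there.

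\emph{Step 3: localized coercivity.} Take a cutoff $\chi$ that vanishes on $\bigcup_\ell B(\mathbf{a}_n^\ell,R)$, equals $1$ on $\Omega_{n,2R}$, and has $|\nabla\chi|\le C/R$ supported in the disjoint annuli $A_n^\ell=B(\mathbf{a}_n^\ell,2R)\setminus B(\mathbf{a}_n^\ell,R)$. For $n$ large these annuli are disjoint by \eqref{eq:1.8}. Multiply this cutoff by the large-$S$ cutoff $\zeta_S$ of \eqref{eq:4.5} and let $S\to\infty$ exactly as in Lemma~\ref{lem:4.1}. I test \eqref{eq:4.3} with $\chi^2 s_n$ and \eqref{eq:4.4} with $\chi^2(\theta_n-\bar\theta_n^\ell)$, where $\bar\theta_n^\ell$ is the average of $\theta_n$ over $A_n^\ell$, so that the subtracted constant differs from annulus to annulus. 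Adding the two identities, the pointwise coercivity \eqref{eq:4.12} and the smallness of $s_n$ give
\begin{equation*}
 \int\chi^2\bigl(|\nabla s_n|^2+|\nabla\theta_n|^2+s_n^2\bigr)
 \le C_J\eta\int\chi^2\bigl(|\nabla s_n|^2+|\nabla\theta_n|^2+s_n^2\bigr)
 +C_J\sum_\ell\int_{A_n^\ell}\bigl(|\nabla s_n|^2+|\nabla\theta_n|^2+s_n^2\bigr).
\end{equation*}
The cross terms carrying $\nabla\chi$ are handled by Cauchy--Schwarz together with the scale-invariant Poincaré inequality \eqref{eq:4.6} on each annulus. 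After absorbing the first term on the right, and using that $e(\psi_n)$ is comparable to $|\nabla s_n|^2+|\nabla\theta_n|^2+s_n^2$, this yields
\begin{equation*}
 \int_{\Omega_{n,2R}}e(\psi_n)\le C_J\sum_\ell\int_{A_n^\ell}e(\psi_n).
\end{equation*}

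\emph{Step 4: conclusion.} By the local convergence to $\psi^\ell$,
\begin{equation*}
 \limsup_{n\to\infty}\int_{A_n^\ell}e(\psi_n)=\int_{B_{2R}\setminus B_R}e(\psi^\ell).
\end{equation*}
This tends to $0$ as $R\to\infty$ because $E(\psi^\ell)<\infty$, and \eqref{eq:4.15} follows. When $K=0$ there are no holes and no annuli. Lemma~\ref{lem:4.1}'s argument then applies on all of $\R^2$ and forces the energy to vanish for large $n$; in fact $\psi_n$ is constant there. I expect the main obstacle to be Step~3: each cutoff error term must land only on annular energy and never on $\|\theta_n\|_\infty$. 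This is why I subtract a separate average on each annulus, and why the zero-winding lift of Step~2 is needed.
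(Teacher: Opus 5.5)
Your outline follows the paper's proof. Step~1 is \eqref{eq:4.16}, Step~2 is the zero-degree lift on the perforated plane, Step~3 is the localized version of Lemma~\ref{lem:4.1} giving \eqref{eq:4.31}, Step~4 is \eqref{eq:4.32}--\eqref{eq:4.33}, and $K=0$ is handled the same way.

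There is, however, a missing idea precisely at the point you call the main obstacle. The expression $\chi^2(\theta_n-\bar\theta_n^\ell)$ with an $\ell$-dependent constant is not a test function. The region where $\chi=1$ is connected, so you cannot subtract different constants near different annuli. The polar equation \eqref{eq:4.4}, which you have only on $\Omega_{n,R}$, cannot justify it either. From the exterior equation alone you learn only that the fluxes of the current through the $K$ holes add up to zero, since the flux through a large circle vanishes. You do not learn that each flux vanishes. If you use a single common constant $\bar\theta$ instead, the leftover terms $(\bar\theta_n^\ell-\bar\theta)\int_{A_n^\ell}j_n\cdot\nabla(\chi^2)$ are not controlled by annular energy.

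The fix, which is the paper's \eqref{eq:4.19} and \eqref{eq:4.21}, uses the equation inside the holes, where $\psi_n$ may vanish and no phase exists.
\begin{itemize}
\item Multiply \eqref{eq:1.2} by $\overline{\psi_n}$ and take imaginary parts. This shows that $j_n=\Ima(\overline{\psi_n}\nabla\psi_n)+\frac{c_n}{2}(|\psi_n|^2-1)(1,0)$ is smooth and divergence-free on all of $\R^2$. On $\Omega_{n,R}$ it equals $\rho_n^2\nabla\theta_n+\frac{c_n}{2}(\rho_n^2-1)(1,0)$.
\item Test $\operatorname{div}j_n=0$ with $1-\chi^2$, restricted to $B(\mathbf{a}_n^\ell,2R)$ and extended by zero. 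This gives $\int_{A_n^\ell}j_n\cdot\nabla(\chi^2)=0$, i.e.\ zero flux through each hole separately.
\item This zero-flux identity is what lets you replace $\theta_n$ by $\theta_n-\bar\theta_n^\ell$ in the cutoff term on each annulus.
\end{itemize}
The same identity with $\zeta_S^2$ is needed to subtract yet another mean on $B_{2S}\setminus B_S$ when $S\to\infty$. That passage is therefore not literally ``exactly as in Lemma~\ref{lem:4.1}'', where a single constant suffices.

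A minor point: the hypothesis used in Step~1 concerns full sequences $\mathbf{z}_n$. Complete your subsequence with points at distance at least $n$ from all centers, as the paper does.

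With these additions your argument coincides with the paper's.
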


\begin{proof}
We first prove that the modulus approaches one uniformly away from the listed centers, namely,
\begin{equation}\label{eq:4.16}
 \lim_{R\to\infty}\limsup_{n\to\infty}
 \sup_{\min_\ell|\mathbf{x}-\mathbf{a}_n^\ell|>R}
 \bigl|1-|\psi_n(\mathbf{x})|\bigr|=0.
\end{equation}
If \eqref{eq:4.16} failed, there would be $\varepsilon_0>0$, radii $R_m\to\infty$, indices $n_m\to\infty$, and points $\mathbf{z}_m$ such that
\begin{equation}\label{eq:4.17}
 \min_\ell|\mathbf{z}_m-\mathbf{a}_{n_m}^\ell|>R_m,\qquad
 |1-|\psi_{n_m}(\mathbf{z}_m)||\ge\varepsilon_0.
\end{equation}
Set $\mathbf{z}_{n_m}=\mathbf{z}_m$, and for the remaining indices choose $\mathbf{z}_n$ so that $\min_\ell|\mathbf{z}_n-\mathbf{a}_n^\ell|\ge n$. The derivative bounds quoted at the start of this section and a diagonal Arzel\`a--Ascoli argument produce a subsequence of $\psi_{n_m}(\mathbf{z}_{n_m}+\cdot)$ that converges on every compact set together with all derivatives. The limit function has modulus different from one at the origin by \eqref{eq:4.17}. This contradicts the hypothesis on the full sequence $\mathbf{z}_n$. This proves \eqref{eq:4.16}.

If $K=0$, \eqref{eq:4.16} says that $|\psi_n|$ is uniformly close to one on the whole plane. For large $n$, \eqref{eq:4.1} holds. By Lemma~\ref{lem:4.1}, we have $E(\psi_n)=0$. Thus \eqref{eq:4.15} holds when $K=0$. We assume $K\ge1$ from now on.

We now construct one real phase on the plane after removing disks of fixed radius around the centers. Since every profile is normalized, its uniform convergence to $1$ at infinity \cite[Theorem~1]{GravejatDecay} provides a common $R_0$ such that
\begin{equation*}
 |\psi^\ell(\mathbf{x})-1|<\frac12
 \qquad(|\mathbf{x}|\ge R_0,\ 1\le\ell\le K).
\end{equation*}
Thus the profiles are nonzero there. Their degree on every circle $\partial B_R$, $R\ge R_0$, is zero because the image of the circle lies in $\{z:|z-1|<1/2\}$ and is therefore homotopic to the constant $1$. Equivalently, with $\partial_\tau$ denoting the counterclockwise tangential derivative and $\dd s$ denoting arclength,
\begin{equation*}
 \deg\left(\frac{\psi^\ell}{|\psi^\ell|},\partial B_R\right)
 =\frac1{2\pi}\int_{\partial B_R}
 \frac{\Ima(\overline{\psi^\ell}\,\partial_\tau\psi^\ell)}
 {|\psi^\ell|^2}\dd s=0.
\end{equation*}
This exterior lifting is also stated in \cite[Lemma~15]{GravejatDecay}. Increase $R_0$ so that $R_0\ge1$, and fix $R\ge R_0$, large enough also that \eqref{eq:4.16} implies $|1-|\psi_n||\le\eta_J$ on the exterior for all sufficiently large $n$. This is the hypothesis used in the polar energy equivalence \eqref{eq:4.7}. The balls $B(\mathbf{a}_n^\ell,2R)$ are disjoint for large $n$. Local convergence transfers nonvanishing and degree zero to $\partial B(\mathbf{a}_n^\ell,3R/2)$. Constant rotations do not change the degree. Equation \eqref{eq:4.16} ensures nonvanishing throughout the exterior domain
\begin{equation*}
 \Omega_{n,R}:=\R^2\setminus
 \bigcup_{\ell=1}^K\overline B(\mathbf{a}_n^\ell,R).
\end{equation*}
Define on this domain the real one-form
\begin{equation*}
 \omega_n:=\frac{\Ima(\overline{\psi_n}\,\dd\psi_n)}{|\psi_n|^2}.
\end{equation*}
Locally, $\omega_n$ is the differential of an argument of $\psi_n$, so it is closed. Its integral around $\partial B(\mathbf{a}_n^\ell,3R/2)$ is $2\pi$ times the degree just computed and is zero. These circles lie in $\Omega_{n,R}$. For a closed one-form on the plane with $K$ disks removed, every integral along a closed curve is an integer linear combination of the integrals around these $K$ circles. Thus the integral of $\omega_n$ along every closed curve in $\Omega_{n,R}$ is zero. The path integral of $\omega_n$ from a fixed point is therefore independent of the path and defines a single-valued real function $\theta_n$ with $\dd\theta_n=\omega_n$. After adding one constant to $\theta_n$, we have
\begin{equation}\label{eq:4.18}
 \psi_n=\rho_n\e^{i\theta_n},
 \qquad \rho_n=|\psi_n|>0
 \qquad\hbox{on }\Omega_{n,R}.
\end{equation}
In particular, $\Ima(\overline{\psi_n}\nabla\psi_n)=\rho_n^2\nabla\theta_n$. Multiplying \eqref{eq:1.2} by $\overline{\psi_n}$ and taking the imaginary part, we obtain
\begin{align}
 &\operatorname{div}\left\{
 \Ima(\overline{\psi_n}\nabla\psi_n)
 +\frac{c_n}{2}(|\psi_n|^2-1)(1,0)\right\}\notag\\
 &=\Ima(\overline{\psi_n}\Delta\psi_n)
 +\frac{c_n}{2}\partial_x|\psi_n|^2\notag\\
 &=-c_n\Rea(\overline{\psi_n}\partial_x\psi_n)
 +\frac{c_n}{2}\partial_x|\psi_n|^2=0.
 \label{eq:4.19}
\end{align}
Choose a fixed smooth radial function $\chi:[0,\infty)\to[0,1]$ with $\chi=0$ on $[0,1]$, $\chi=1$ on $[2,\infty)$, and $|\chi'|\le C$. Define
\begin{equation*}
 \chi_{n,R}(\mathbf{x}):=\prod_{\ell=1}^K
 \chi\!\left(\frac{|\mathbf{x}-\mathbf{a}_n^\ell|}{R}\right).
\end{equation*}
Since the balls $B(\mathbf{a}_n^\ell,2R)$ are disjoint,
\begin{equation}\label{eq:4.20}
 0\le\chi_{n,R}\le1,\qquad
 |\nabla\chi_{n,R}|\le C/R.
\end{equation}
Define the union of inner annuli and their phase means by
\begin{align*}
 A_{n,R}&:=\bigcup_{\ell=1}^K
 \{R<|\mathbf{x}-\mathbf{a}_n^\ell|<2R\},\\
 \theta_{n,\ell}&:=
 \frac{1}{|B_{2R}\setminus B_R|}
 \int_{B(\mathbf{a}_n^\ell,2R)\setminus B(\mathbf{a}_n^\ell,R)}
 \theta_n\dd\mathbf{x}.
\end{align*}
On $B(\mathbf{a}_n^\ell,2R)$ all factors in $\chi_{n,R}$ except the $\ell$th one equal $1$. Hence $\theta_{n,\ell}(1-\chi_{n,R}^2)$ on that ball, extended by zero, is a compactly supported test function. We infer from \eqref{eq:4.19} that
\begin{equation}\label{eq:4.21}
 \int_{B(\mathbf{a}_n^\ell,2R)}
 \left\{\Ima(\overline{\psi_n}\nabla\psi_n)
 +\frac{c_n}{2}(|\psi_n|^2-1)(1,0)\right\}\cdot\nabla
 \{\theta_{n,\ell}(1-\chi_{n,R}^2)\}=0.
\end{equation}
In the flux cutoff term obtained from \eqref{eq:4.19}, identity \eqref{eq:4.21} therefore permits replacing $\theta_n$ by $\theta_n-\theta_{n,\ell}$ on the $\ell$th annulus. From the scaled annular Poincar\'e inequality and \eqref{eq:4.20}, we obtain
\begin{equation}\label{eq:4.22}
 \sum_{\ell=1}^K\int_{\{R<|\mathbf{x}-\mathbf{a}_n^\ell|<2R\}}
 |\nabla\chi_{n,R}|^2|\theta_n-\theta_{n,\ell}|^2
\le C\int_{A_{n,R}}|\nabla\theta_n|^2.
\end{equation}

It remains to estimate the energy in this exterior domain by the energy in the annuli where the cutoff changes. Define
\begin{equation*}
 s_n:=\rho_n-1.
\end{equation*}
In the remainder of this proof, $\|s_n\|_\infty$ abbreviates $\|s_n\|_{L^\infty(\{\chi_{n,R}\ne0\})}$. Equation \eqref{eq:4.16} makes this norm arbitrarily small when $R$ is chosen large and $n$ is then sufficiently large. Substitution of \eqref{eq:4.18} in \eqref{eq:1.2} yields, after division by $\e^{i\theta_n}$,
\begin{align*}
 &\Delta\rho_n-c_n\rho_n\partial_x\theta_n
 -\rho_n|\nabla\theta_n|^2+(1-\rho_n^2)\rho_n\\
 &\quad+i\{c_n\partial_x\rho_n+2\nabla\rho_n\cdot\nabla\theta_n
 +\rho_n\Delta\theta_n\}=0.
\end{align*}
Its real part and its imaginary part multiplied by $\rho_n$ are
\begin{align}
 \Delta\rho_n-c_n\rho_n\partial_x\theta_n
 -\rho_n|\nabla\theta_n|^2+(1-\rho_n^2)\rho_n&=0,
 \label{eq:4.23}\\
 \operatorname{div}(\rho_n^2\nabla\theta_n)
 +\frac{c_n}{2}\partial_x(\rho_n^2)&=0.
 \notag
\end{align}
Local convergence ensures nonvanishing in a fixed collar of every inner boundary circle. The phase therefore extends smoothly to these collars. Since $\chi$ is smooth and is zero on $[0,1]$, the products $\chi_{n,R}^2s_n$ and $\chi_{n,R}^2\theta_n$ extend smoothly by zero across the removed balls. To justify the test at infinity, first fix $n$ and $R$, and choose $S_0$ so large that all the moving balls $B(\mathbf{a}_n^\ell,2R)$ lie in $B_{S_0}$. For $S\ge S_0$, use the cutoff $\zeta_S$ from \eqref{eq:4.5}. Test \eqref{eq:4.23} by $\chi_{n,R}^2\zeta_S^2s_n$ and the divergence-free identity \eqref{eq:4.19} by $\chi_{n,R}^2\zeta_S^2\theta_n$. On each inner annulus subtract $\theta_{n,\ell}$ by \eqref{eq:4.21}. On $B_{2S}\setminus B_S$ subtract the average of $\theta_n$ on that annulus. Since $S\ge S_0$, we have $\chi_{n,R}=1$ on $B_{2S}\setminus B_S$. The outer cutoff term from the real equation satisfies
\begin{equation*}
 2\left|\int_{B_{2S}\setminus B_S}
 \zeta_Ss_n\nabla\zeta_S\cdot\nabla s_n\right|
 \le C\int_{B_{2S}\setminus B_S}
 (|\nabla s_n|^2+s_n^2)\longrightarrow0.
\end{equation*}
For the divergence equation, the scaled annular Poincar\'e inequality and $|\nabla\zeta_S|\le C/S$ give
\begin{align*}
 &2\left|\int_{B_{2S}\setminus B_S}\zeta_S
 \left(\theta_n-
 \frac{1}{|B_{2S}\setminus B_S|}
 \int_{B_{2S}\setminus B_S}\theta_n\right)
 \left\{\rho_n^2\nabla\theta_n
 +\frac{c_n}{2}(\rho_n^2-1)(1,0)\right\}
 \cdot\nabla\zeta_S\right|\\
 &\quad\le C
 \left(\int_{B_{2S}\setminus B_S}|\nabla\theta_n|^2\right)^{1/2}
 \left(\int_{B_{2S}\setminus B_S}
 (|\nabla\theta_n|^2+s_n^2)\right)^{1/2}
 \longrightarrow0.
\end{align*}
Here we used the polar energy formula following \eqref{eq:4.4} on the outer annulus. The terms supported on $A_{n,R}$ are independent of $S$ once $S\ge S_0$. The remaining bulk terms converge by dominated convergence, since their integrands are bounded by a constant times $|\nabla s_n|^2+|\nabla\theta_n|^2+s_n^2$, which is integrable on $\Omega_{n,R}$. This proves the passage to the limit $S\to\infty$ with $n$ and $R$ fixed.

Since $\nabla\rho_n=\nabla s_n$ and $(1-\rho_n^2)\rho_n=-2s_n-3s_n^2-s_n^3$, we deduce from the real equation that
\begin{align*}
 \int \Delta\rho_n\,\chi_{n,R}^2s_n
 &=-\int\chi_{n,R}^2|\nabla s_n|^2
   -2\int\chi_{n,R}s_n\nabla\chi_{n,R}\cdot\nabla s_n,\\
 -\int(1-\rho_n^2)\rho_ns_n\chi_{n,R}^2
 &=\int\chi_{n,R}^2(2s_n^2+3s_n^3+s_n^4).
\end{align*}
For the divergence equation, the coefficient of $\partial_x\theta_n$ is
\begin{equation*}
 \frac{c_n}{2}(\rho_n^2-1)
 =c_ns_n+\frac{c_n}{2}s_n^2.
\end{equation*}
Using these expansions in the tests of \eqref{eq:4.23} and \eqref{eq:4.19}, respectively, after letting $S\to\infty$ and subtracting the annular means by \eqref{eq:4.21}, we derive
\begin{align}
 &\int\chi_{n,R}^2\left(
 |\nabla s_n|^2+2s_n^2+c_ns_n\partial_x\theta_n\right)\notag\\
&\quad=-2\int\chi_{n,R}s_n\nabla\chi_{n,R}\cdot\nabla s_n
 -\int\chi_{n,R}^2\left\{
 c_ns_n^2\partial_x\theta_n
 +(1+s_n)s_n|\nabla\theta_n|^2+3s_n^3+s_n^4\right\},
 \label{eq:4.24}\\
 &\int\chi_{n,R}^2\left\{
 \rho_n^2|\nabla\theta_n|^2+c_ns_n\partial_x\theta_n
 +\frac{c_n}{2}s_n^2\partial_x\theta_n\right\}\notag\\
 &\quad=-2\sum_{\ell=1}^K
 \int_{\{R<|\mathbf{x}-\mathbf{a}_n^\ell|<2R\}}\chi_{n,R}
 (\theta_n-\theta_{n,\ell})
 \left\{\rho_n^2\nabla\theta_n
 +\frac{c_n}{2}(\rho_n^2-1)(1,0)\right\}
 \cdot\nabla\chi_{n,R}.
 \label{eq:4.25}
\end{align}
The right-hand side of \eqref{eq:4.25} is supported on $A_{n,R}$ because $\nabla\chi_{n,R}$ vanishes elsewhere. On this set the polar energy formula following \eqref{eq:4.4} and $|s_n|\le\eta_J<1/2$ show that $e(\psi_n)$ is bounded above and below by fixed positive multiples of $|\nabla s_n|^2+|\nabla\theta_n|^2+s_n^2$. It follows from Young's inequality and \eqref{eq:4.20} that
\begin{align}
 2\left|\int\chi_{n,R}s_n\nabla\chi_{n,R}\cdot\nabla s_n\right|
 &\le\int_{A_{n,R}}\chi_{n,R}^2|\nabla s_n|^2
 +\int_{A_{n,R}}s_n^2|\nabla\chi_{n,R}|^2
 \le C\int_{A_{n,R}}e(\psi_n).
 \label{eq:4.26}
\end{align}
Since $|s_n|\le\eta_J<1/2$ and $c_n\in J$, we also have
\begin{equation*}
 \left|\rho_n^2\nabla\theta_n
 +\frac{c_n}{2}(\rho_n^2-1)(1,0)\right|^2
 \le C_J\bigl(|\nabla\theta_n|^2+s_n^2\bigr).
\end{equation*}
Combining this inequality with \eqref{eq:4.22}, Hölder's inequality, and the disjointness of the annuli, we obtain
\begin{align}\label{eq:4.27}
&2\sum_{\ell=1}^K\left|
 \int_{\{R<|\mathbf{x}-\mathbf{a}_n^\ell|<2R\}}
 \chi_{n,R}(\theta_n-\theta_{n,\ell})
 \left\{\rho_n^2\nabla\theta_n
 +\frac{c_n}{2}(\rho_n^2-1)(1,0)\right\}
 \cdot\nabla\chi_{n,R}\right|\\
&\quad\le C\left(\int_{A_{n,R}}
 \bigl(|\nabla\theta_n|^2+s_n^2\bigr)\right)^{1/2}
 \left(\int_{A_{n,R}}|\nabla\theta_n|^2\right)^{1/2}
 \le C\int_{A_{n,R}}e(\psi_n).\nonumber
\end{align}
The nonlinear terms involving $s_n^2\partial_x\theta_n$, $(1+s_n)s_n|\nabla\theta_n|^2$, and $3s_n^3+s_n^4$ in \eqref{eq:4.24}--\eqref{eq:4.25} are controlled pointwise by
\begin{align*}
 |s_n^2\partial_x\theta_n|
 &\le\frac{|s_n|}{2}
       (s_n^2+|\partial_x\theta_n|^2),\\
 |(1+s_n)s_n|\nabla\theta_n|^2|
 &\le(1+\eta_J)|s_n||\nabla\theta_n|^2,\\
 |3s_n^3+s_n^4|&\le(3+\eta_J)|s_n|s_n^2.
\end{align*}
Since $c_n\in J$, integrating these bounds yields
\begin{align}
 &\int\chi_{n,R}^2\left|
 \frac{3c_n}{2}s_n^2\partial_x\theta_n
 +(1+s_n)s_n|\nabla\theta_n|^2+3s_n^3+s_n^4\right|\notag\\
 &\qquad\le C_J\|s_n\|_\infty
 \int\chi_{n,R}^2
 \left(|\nabla s_n|^2+|\nabla\theta_n|^2+s_n^2\right).
 \label{eq:4.28}
\end{align}
Adding \eqref{eq:4.24} and \eqref{eq:4.25}, and moving the $s_n^2\partial_x\theta_n$ term to the right, then using \eqref{eq:4.26}, \eqref{eq:4.27}, and \eqref{eq:4.28}, we infer
\begin{align}
 &\int\chi_{n,R}^2\left(
 |\nabla s_n|^2+\rho_n^2|\nabla\theta_n|^2+2s_n^2
 +2c_ns_n\partial_x\theta_n\right)\notag\\
 &\qquad\le
 C\|s_n\|_\infty\int\chi_{n,R}^2
 \left(|\nabla s_n|^2+|\nabla\theta_n|^2+s_n^2\right)
 +C\int_{A_{n,R}}e(\psi_n).
 \label{eq:4.29}
\end{align}

To bound the left-hand side of \eqref{eq:4.29}, replace $\rho_n^2|\nabla\theta_n|^2$ by $|\nabla\theta_n|^2$. The absolute value of the resulting error is at most $C\|s_n\|_\infty\int\chi_{n,R}^2|\nabla\theta_n|^2$. The quadratic form in $(\sqrt2s_n,\partial_x\theta_n)$ satisfies
\begin{equation}\label{eq:4.30}
 2s_n^2+|\partial_x\theta_n|^2
 +2c_ns_n\partial_x\theta_n
 \ge\left(1-\frac{\max J}{\sqrt2}\right)
 \left(2s_n^2+|\partial_x\theta_n|^2\right).
\end{equation}
Also $|\rho_n^2-1|\le(2+\|s_n\|_\infty)\|s_n\|_\infty$. Therefore, once $\|s_n\|_\infty$ is small enough, \eqref{eq:4.30} and the positive $|\nabla s_n|^2+|\partial_y\theta_n|^2$ terms show that the left-hand side of \eqref{eq:4.29} is at least
\begin{equation*}
 \frac12\left(1-\frac{\max J}{\sqrt2}\right)
 \int\chi_{n,R}^2
 (|\nabla s_n|^2+|\nabla\theta_n|^2+s_n^2).
\end{equation*}
First choose $R$ so large that \eqref{eq:4.16} makes $C\|s_n\|_\infty$ no larger than one quarter of $1-\max J/\sqrt2$ for all sufficiently large $n$. Then the first term on the right of \eqref{eq:4.29} is absorbed into the preceding lower bound, and
\begin{equation*}
 \frac14\left(1-\frac{\max J}{\sqrt2}\right)
 \int\chi_{n,R}^2
 (|\nabla s_n|^2+|\nabla\theta_n|^2+s_n^2)
 \le C\int_{A_{n,R}}e(\psi_n).
\end{equation*}
Since $\chi_{n,R}=1$ outside the balls of radius $2R$, the polar energy equivalence turns this inequality into
\begin{equation}\label{eq:4.31}
 \int_{\R^2\setminus\bigcup_\ell B(\mathbf{a}_n^\ell,2R)}
 e(\psi_n)\dd\mathbf{x}
 \le C_J\int_{A_{n,R}}e(\psi_n)\dd\mathbf{x}.
\end{equation}

We next take the limits in the required order. For fixed $R$, the balls and annuli about distinct centers are disjoint for all large $n$, and local smooth convergence at each center gives
\begin{equation}\label{eq:4.32}
 \lim_{n\to\infty}\int_{A_{n,R}}e(\psi_n)\dd\mathbf{x}
 =\sum_{\ell=1}^K
 \int_{B_{2R}\setminus B_R}e(\psi^\ell)\dd\mathbf{x}.
\end{equation}
The right-hand side of \eqref{eq:4.32} tends to zero as $R\to\infty$. Since the region outside the balls of radius $R$ is the disjoint union, up to boundaries, of $A_{n,R}$ and $\R^2\setminus\bigcup_\ell B(\mathbf{a}_n^\ell,2R)$, we conclude from \eqref{eq:4.31} that
\begin{equation}\label{eq:4.33}
 \int_{\{\mathbf{x}:\min_\ell|\mathbf{x}-\mathbf{a}_n^\ell|>R\}}
 e(\psi_n)\dd\mathbf{x}
 \le(1+C_J)\int_{A_{n,R}}e(\psi_n)\dd\mathbf{x}.
\end{equation}
Taking first $\limsup_{n\to\infty}$ in \eqref{eq:4.33}, using \eqref{eq:4.32}, and then taking $R\to\infty$ proves \eqref{eq:4.15}.
\end{proof}

\begin{proof}[Proof of Theorem~\ref{thm:1.3}]
The estimate proved in Theorem~\ref{thm:1.1} first converts the hypothesis \eqref{eq:1.7} into the energy bound
\begin{equation*}
 \sup_nE(\psi_n)<\infty.
\end{equation*}
We first pass to a subsequence on which $\ind(\psi_n)$ converges to its lower limit. Every subsequent extraction is taken from this subsequence. The derivative estimates quoted at the start of this section imply local precompactness, uniformly after arbitrary translations. If $\psi_n(\mathbf{z}_n+\cdot)\to v$ locally smoothly, then for every $R>0$,
\begin{equation*}
 \int_{B_R}e(v)\dd\mathbf{x}
 =\lim_{n\to\infty}\int_{B(\mathbf{z}_n,R)}e(\psi_n)\dd\mathbf{x}
 \le\sup_nE(\psi_n).
\end{equation*}
Letting $R\to\infty$ shows that $E(v)<\infty$. A constant solution of \eqref{eq:1.2} is either $0$ or has modulus one. The constant $0$ has infinite energy on $\R^2$. Hence every constant translated limit has modulus one.

Starting with no centers, we select nonconstant translated limits whose centers separate from all previously chosen centers. Suppose that centers $\mathbf{a}_n^1,\ldots,\mathbf{a}_n^m$ have been chosen. If every sequence $\mathbf{z}_n$ satisfying
\begin{equation}\label{eq:4.34}
 \min_{1\le\ell\le m}|\mathbf{z}_n-\mathbf{a}_n^\ell|\longrightarrow\infty
\end{equation}
has only constants of modulus one as locally smooth subsequential limits, we stop (with the convention in Lemma~\ref{lem:4.3} when $m=0$). Otherwise choose $\mathbf{z}_n$ satisfying \eqref{eq:4.34} such that a subsequence of $\psi_n(\mathbf{z}_n+\cdot)$ has a nonconstant local limit, and take $\mathbf{a}_n^{m+1}=\mathbf{z}_n$. After extraction there are phases $\alpha_n^{m+1}\in\mathbb S^1$ and a normalized nonconstant finite-energy wave $\psi^{m+1}$ at speed $c$ such that
\begin{equation*}
 \overline{\alpha_n^{m+1}}\,
 \psi_n(\mathbf{a}_n^{m+1}+\cdot)\longrightarrow\psi^{m+1}
 \quad\text{locally together with all derivatives}.
\end{equation*}
The equation passes to the limit because $c_n\to c$, and the energy argument above shows that the limit has finite energy. We choose the constant phase so that $\psi^{m+1}$ tends to $1$ at infinity. Equation \eqref{eq:4.34} separates the new center from every previously chosen center. At each repetition we pass to a further subsequence. All convergences and separations obtained earlier remain valid. The energy estimate in the next paragraph shows that only finitely many repetitions are possible, so the final subsequence retains every selected profile.

For each of the first $m$ profiles choose $\rho_\ell>0$ such that
\begin{equation*}
 \int_{B_{\rho_\ell}}e(\psi^\ell)\ge e_J/2.
\end{equation*}
This is possible by Lemma~\ref{lem:4.2}. For large $n$ the corresponding translated balls are disjoint, and local convergence yields
\begin{equation*}
 \frac{m e_J}{2}
 \le\liminf_{n\to\infty}
 \sum_{\ell=1}^m\int_{B(\mathbf{a}_n^\ell,\rho_\ell)}e(\psi_n)
 \le \sup_nE(\psi_n).
\end{equation*}
Thus the induction stops after at most $2\sup_nE(\psi_n)/e_J$ profiles. Let $K$ be the final number. The construction establishes \eqref{eq:1.8} and \eqref{eq:1.9}.

At termination, the stopping condition is exactly the hypothesis of Lemma~\ref{lem:4.3}. Therefore
\begin{equation}\label{eq:4.35}
 \lim_{R\to\infty}\limsup_n
 \int_{\R^2\setminus\bigcup_\ell B(\mathbf{a}_n^\ell,R)}
 e(\psi_n)=0.
\end{equation}

If $K=0$, take $R_n=n$. It follows from \eqref{eq:4.35} that $E(\psi_n)\to0$. Since each energy component is nonnegative, \eqref{eq:3.2} yields \eqref{eq:1.11}-- \eqref{eq:1.14}. Equation \eqref{eq:1.15} is empty. Assume $K\ge1$.

To use the local convergence and \eqref{eq:4.35} simultaneously, we choose one diagonal sequence of radii. For each integer $m\ge1$, choose $r_m\ge m$, increasing in $m$, such that, for every $1\le\ell\le K$,
\begin{equation}\label{eq:4.36}
 \int_{\R^2\setminus B_{r_m}}
 \left(|\nabla\psi^\ell|^2+|\partial_y\psi^\ell|^2
 +(1-|\psi^\ell|^2)^2\right)\dd\mathbf{x}<\frac1m,
\end{equation}
and, by \eqref{eq:4.35},
\begin{equation}\label{eq:4.37}
 \limsup_{n\to\infty}
 \int_{\R^2\setminus\bigcup_{\ell=1}^K B(\mathbf{a}_n^\ell,r_m)}
 e(\psi_n)\dd\mathbf{x}<\frac1m.
\end{equation}
The three densities used in \eqref{eq:4.36} are
\begin{equation}\label{eq:4.38}
 |\nabla\psi^\ell|^2,\qquad |\partial_y\psi^\ell|^2,
 \qquad (1-|\psi^\ell|^2)^2.
\end{equation}
By local convergence on the finitely many balls, choose strictly increasing integers $N_m$ such that, for every $n\ge N_m$ and $1\le\ell\le K$,
\begin{align*}
 \left|\int_{B(\mathbf{a}_n^\ell,r_m)}|\nabla\psi_n|^2
 -\int_{B_{r_m}}|\nabla\psi^\ell|^2\right|&<\frac1m,\\
 \left|\int_{B(\mathbf{a}_n^\ell,r_m)}|\partial_y\psi_n|^2
 -\int_{B_{r_m}}|\partial_y\psi^\ell|^2\right|&<\frac1m,\\
 \left|\int_{B(\mathbf{a}_n^\ell,r_m)}(1-|\psi_n|^2)^2
 -\int_{B_{r_m}}(1-|\psi^\ell|^2)^2\right|&<\frac1m.
\end{align*}
Increase $N_m$, if necessary, so that the exterior energy in \eqref{eq:4.37} is also less than $2/m$ for every $n\ge N_m$. If $K\ge2$, increase $N_m$ so that
\begin{equation}\label{eq:4.39}
 \frac{r_m}{\min_{k\ne\ell}|\mathbf{a}_n^k-\mathbf{a}_n^\ell|}<\frac1m
 \qquad(n\ge N_m).
\end{equation}
After increasing $N_m$ once more, the balls $B(\mathbf{a}_n^\ell,r_m)$ are pairwise disjoint for every $n\ge N_m$. This also prevents overlap when the local integrals are added. Define
\begin{equation*}
 R_n:=r_m\qquad(N_m\le n<N_{m+1}),
\end{equation*}
and choose the finitely many earlier values to be positive. Then
\begin{equation}\label{eq:4.40}
 R_n\longrightarrow\infty.
\end{equation}
When $K\ge2$, we also infer from \eqref{eq:4.39} that
\begin{equation}\label{eq:4.41}
 \frac{R_n}{\min_{k\ne\ell}|\mathbf{a}_n^k-\mathbf{a}_n^\ell|}\longrightarrow0.
\end{equation}
Equations \eqref{eq:4.36}, \eqref{eq:4.37}, and \eqref{eq:4.40}, together with \eqref{eq:4.41} when $K\ge2$, imply \eqref{eq:1.10}. We spell out how the same diagonal choice splits the three integrals. Let $g_n$ be, in turn, $e(\psi_n)$, $|\partial_y\psi_n|^2$, or $(1-|\psi_n|^2)^2$, and let $g^\ell$ denote the corresponding density of $\psi^\ell$. Disjointness of the balls gives the exact decomposition
\begin{align*}
 \int_{\R^2}g_n\dd\mathbf{x}
 -\sum_{\ell=1}^K\int_{\R^2}g^\ell\dd\mathbf{x}
 &=\sum_{\ell=1}^K\left\{
 \int_{B(\mathbf{a}_n^\ell,R_n)}g_n\dd\mathbf{x}
 -\int_{B_{R_n}}g^\ell\dd\mathbf{x}\right\}\\
 &\quad+\int_{\R^2\setminus\bigcup_\ell
 B(\mathbf{a}_n^\ell,R_n)}g_n\dd\mathbf{x}
 -\sum_{\ell=1}^K\int_{\R^2\setminus B_{R_n}}g^\ell\dd\mathbf{x}.
\end{align*}
If $N_m\le n<N_{m+1}$, the absolute value of the first line is at most $C_K/m$ by the definition of $N_m$ and the formula for $e$. Equations \eqref{eq:4.36} and \eqref{eq:4.37} bound the absolute value of the second line by $C_K/m$. For the first two densities use $|\partial_y\psi|^2\le|\nabla\psi|^2\le2e(\psi)$, and for the third use $(1-|\psi|^2)^2\le4e(\psi)$. Hence the displayed difference tends to zero. The three choices of $g_n$ prove \eqref{eq:1.11}, \eqref{eq:1.12} using the first identity in \eqref{eq:3.2}, and \eqref{eq:1.13}, respectively.

For momentum, the second identity in \eqref{eq:3.2} is
\begin{equation}\label{eq:4.42}
 c_nP(\psi_n)=\frac12\int_{\R^2}(1-|\psi_n|^2)^2\dd\mathbf{x}.
\end{equation}
Since $c_n\to c>0$, we derive directly from \eqref{eq:1.13} and \eqref{eq:4.42} that
\begin{equation*}
 P(\psi_n)=\frac1{2c_n}\int_{\R^2}(1-|\psi_n|^2)^2\dd\mathbf{x}
 \longrightarrow\sum_{\ell=1}^K\frac1{2c}
 \int_{\R^2}(1-|\psi^\ell|^2)^2\dd\mathbf{x}
 =\sum_{\ell=1}^KP(\psi^\ell),
\end{equation*}
which is \eqref{eq:1.14}.

It remains to prove the index inequality. For each $1\le\ell\le K$, choose any finite-dimensional real vector space on which $Q_{\psi^\ell,c}$ is negative definite, and choose a real basis so that
\begin{equation}\label{eq:4.43}
 Y^\ell=\operatorname{span}
 \{\zeta_{\ell,1},\ldots,\zeta_{\ell,d_\ell}\}
 \subset C_c^\infty(\R^2,\C).
\end{equation}
If $\sum_\ell d_\ell=0$, the desired bound is immediate. Hence assume $\sum_\ell d_\ell>0$ and omit zero-dimensional spaces from the maxima in the next formulas. Compactness of the Euclidean unit spheres in the finitely many spaces $Y^\ell$ supplies $\delta>0$ such that
\begin{equation}\label{eq:4.44}
 Q_{\psi^\ell,c}\!\left(\sum_{p=1}^{d_\ell}t_{\ell,p}
 \zeta_{\ell,p}\right)
 \le-2\delta\sum_{p=1}^{d_\ell}t_{\ell,p}^2
 \quad(1\le\ell\le K).
\end{equation}
Here all coefficients $t_{\ell,p}$ are real. Define
\begin{equation*}
 \zeta_{n,\ell,p}(\mathbf{x})
 :=\alpha_n^\ell\zeta_{\ell,p}(\mathbf{x}-\mathbf{a}_n^\ell).
\end{equation*}
Tests with distinct profile labels $\ell$ have disjoint supports for large $n$ by \eqref{eq:1.8}. In the term with profile label $\ell$, we change variables by $\mathbf{x}=\mathbf{a}_n^\ell+\mathbf{z}$ and multiply both the solution and the test by $\overline{\alpha_n^\ell}$. The form \eqref{eq:1.4} is unchanged by this constant rotation. By \eqref{eq:1.9}, $c_n\to c$, and the coefficient formula \eqref{eq:1.4}, the coefficients of the quadratic form converge uniformly on a fixed compact set in the $\mathbf{z}$ coordinates containing the supports of $\zeta_{\ell,1},\ldots,\zeta_{\ell,d_\ell}$. Since each $Y^\ell$ is finite-dimensional, we obtain
\begin{equation}\label{eq:4.45}
 \max_{1\le\ell\le K}
 \sup_{\substack{t_1,\ldots,t_{d_\ell}\in\R\\
                   \sum_{p=1}^{d_\ell}t_p^2=1}}
 \left|Q_{\psi_n,c_n}\!\left(\sum_{p=1}^{d_\ell}
 t_p\zeta_{n,\ell,p}\right)
 -Q_{\psi^\ell,c}\!\left(\sum_{p=1}^{d_\ell}
 t_p\zeta_{\ell,p}\right)\right|
 \longrightarrow0.
\end{equation}
Mixed terms with distinct profile labels $\ell$ vanish because their supports are disjoint. We therefore conclude from equations \eqref{eq:4.44} and \eqref{eq:4.45} that, for all large $n$,
\begin{equation*}
 Q_{\psi_n,c_n}\!\left(
 \sum_{\ell=1}^K\sum_{p=1}^{d_\ell}
 t_{\ell,p}\zeta_{n,\ell,p}\right)
 \le-\delta\sum_{\ell=1}^K\sum_{p=1}^{d_\ell}t_{\ell,p}^2.
\end{equation*}
Consequently, \eqref{eq:1.5} implies
\begin{equation*}
 \sum_{\ell=1}^K d_\ell\le\ind(\psi_n)
 \quad\text{for all sufficiently large }n,
\end{equation*}
and hence
\begin{equation}\label{eq:4.46}
 \sum_{\ell=1}^K d_\ell
 \le\liminf_{n\to\infty}\ind(\psi_n).
\end{equation}
If every profile index is finite, choose the spaces in \eqref{eq:4.43} with $d_\ell=\ind(\psi^\ell)$. Equation \eqref{eq:4.46} yields \eqref{eq:1.15}. If some profile index is $+\infty$, its space $Y^\ell$ can be chosen with arbitrarily large finite dimension. Then \eqref{eq:4.46} proves the same inequality in the extended nonnegative integers.
\end{proof}

\subsection{Minimization among waves of index at most one}

\begin{proof}[Proof of Theorem~\ref{thm:1.4}]
Theorem~\ref{thm:1.2} makes the set in \eqref{eq:1.16} nonempty. The calculation \eqref{eq:3.31}--\eqref{eq:3.33} also proves the upper bound $\beta_1(c)\le A_J$ in \eqref{eq:1.17}. We first prove that the action cannot approach zero along nonconstant waves of index at most one. Fix a compact interval $J\subset(0,\sqrt2)$ and suppose that no $b_J>0$ exists. Then there are $c_n\in J$ and nonconstant finite-energy waves $v_n$ at speed $c_n$ such that
\begin{equation}\label{eq:4.47}
 I_{c_n}(v_n)\longrightarrow0,
 \qquad \ind(v_n)\le1.
\end{equation}
After extraction, $c_n\to c\in J$. By Theorem~\ref{thm:1.1} and \eqref{eq:4.47}, we have
\begin{equation}\label{eq:4.48}
 \sup_nE(v_n)\le C_J\sup_n\bigl(I_{c_n}(v_n)+1\bigr)<\infty.
\end{equation}
The uniform modulus gap \eqref{eq:4.14} provides points $\mathbf{z}_n\in\R^2$ such that
\begin{equation}\label{eq:4.49}
 \bigl|1-|v_n(\mathbf{z}_n)|\bigr|>\eta_J.
\end{equation}
Translate and rotate by setting
\begin{equation*}
 w_n(\mathbf{x}):=
 \begin{cases}
 \dfrac{\overline{v_n(\mathbf{z}_n)}}{|v_n(\mathbf{z}_n)|}\,
 v_n(\mathbf{z}_n+\mathbf{x}),
   &v_n(\mathbf{z}_n)\ne0,\\[5pt]
 v_n(\mathbf{z}_n+\mathbf{x}),&v_n(\mathbf{z}_n)=0.
 \end{cases}
\end{equation*}
By the derivative estimates quoted at the start of this section and a subsequence, we obtain a solution $v$ at speed $c$ such that
\begin{equation*}
 w_n\longrightarrow v
 \quad\text{locally together with all derivatives}.
\end{equation*}
Fatou's lemma, \eqref{eq:4.48}, and \eqref{eq:4.49} yield
\begin{equation}\label{eq:4.50}
 E(v)\le\liminf_{n\to\infty}E(w_n)<\infty,
 \qquad \bigl|1-|v(0)|\bigr|\ge\eta_J.
\end{equation}
A constant solution $v\equiv q$ satisfies $(1-|q|^2)q=0$, so either $q=0$ or $|q|=1$. The constant $q=0$ has infinite energy, while $|q|=1$ contradicts the modulus gap in \eqref{eq:4.50}. Thus $v$ is nonconstant.

For every $R>0$, it follows from \eqref{eq:3.2}, \eqref{eq:4.47}, and local convergence that
\begin{equation*}
 \int_{B_R}|\partial_yv|^2\dd\mathbf{x}
 =\lim_{n\to\infty}\int_{B_R}|\partial_yw_n|^2\dd\mathbf{x}
 \le\lim_{n\to\infty}I_{c_n}(v_n)=0.
\end{equation*}
Hence $\partial_yv=0$ and $v(x,y)=q(x)$ for a smooth nonconstant function $q$. Since $q$ is nonconstant, there are $a<b$ for which
\begin{equation*}
 \int_a^b\left\{\frac12|q'|^2
 +\frac14(1-|q|^2)^2\right\}\dd x>0.
\end{equation*}
If this integral vanished on every bounded interval, then $q'=0$ and $|q|=1$ everywhere. For every $L>0$, we now integrate over the rectangle $(a,b)\times(-L,L)$ to obtain
\begin{equation*}
 \begin{aligned}
 E(v)&\ge\int_{-L}^{L}\int_a^b
 \left\{\frac12|q'(x)|^2+\frac14(1-|q(x)|^2)^2\right\}\dd x\dd y\\
 &=2L\int_a^b\left\{\frac12|q'|^2
 +\frac14(1-|q|^2)^2\right\}\dd x\longrightarrow\infty.
 \end{aligned}
\end{equation*}
This contradicts \eqref{eq:4.50} and proves the lower bound in \eqref{eq:1.17}.

We next prove attainment. Fix $c$ and let $v_n$ be a minimizing sequence in \eqref{eq:1.16}. Explicitly,
\begin{equation}\label{eq:4.51}
 I_c(v_n)\longrightarrow\beta_1(c),
 \qquad\ind(v_n)\le1.
\end{equation}
Using the normalization convention fixed after \eqref{eq:1.3}, choose the representative of each $v_n$ that tends to $1$ at infinity. Theorem~\ref{thm:1.1} bounds $E(v_n)$, so Theorem~\ref{thm:1.3} applies. If its profile number were $K=0$, \eqref{eq:1.12} would imply $I_c(v_n)\to0$, contrary to the positive lower bound already proved in \eqref{eq:1.17}. Thus $K\ge1$. We infer from equations \eqref{eq:1.15} and \eqref{eq:4.51} that
\begin{equation*}
 \sum_{\ell=1}^K\ind(\psi^\ell)\le1,
\end{equation*}
so every nonconstant profile $\psi^\ell$ is admissible in \eqref{eq:1.16}. Exact action splitting implies
\begin{equation}\label{eq:4.52}
 \beta_1(c)=\sum_{\ell=1}^K I_c(\psi^\ell)
 \ge K\beta_1(c).
\end{equation}
Since $\beta_1(c)>0$ and $K\ge1$, equation \eqref{eq:4.52} forces $K=1$. Denote the sole profile by $\psi_c$. Then
\begin{equation}\label{eq:4.53}
 I_c(\psi_c)=\beta_1(c),\qquad \ind(\psi_c)\le1.
\end{equation}
For $c\in J$, we conclude from equations \eqref{eq:1.6}, \eqref{eq:1.17}, and \eqref{eq:4.53} that
\begin{equation*}
 E(\psi_c)\le C_J\{I_c(\psi_c)+\ind(\psi_c)\}
 \le C_J(A_J+1),
\end{equation*}
which is \eqref{eq:1.18}.

We also prove the asserted precompactness of every minimizing sequence. Set $w_n=\overline{\alpha_n}v_n(\mathbf{a}_n+\,\cdot)$, using the translations and phases from Theorem~\ref{thm:1.3}. The uniform energy bound and local smooth convergence imply the following weak $L^2$ convergences. The arrow $\rightharpoonup$ denotes weak convergence:
\begin{equation}\label{eq:4.54}
 \nabla w_n\rightharpoonup\nabla\psi_c
 \quad\text{and}\quad
 1-|w_n|^2\rightharpoonup1-|\psi_c|^2
 \qquad\text{weakly in }L^2(\R^2).
\end{equation}
The $L^2(\R^2)$ bounds on $\nabla w_n$ and $1-|w_n|^2$ allow weakly convergent subsequences. Local smooth convergence identifies their weak limits as $\nabla\psi_c$ and $1-|\psi_c|^2$ against compactly supported smooth tests. Density then extends this identification to all $L^2$ tests.

For $K=1$, we obtain from \eqref{eq:1.13} and \eqref{eq:1.11} that
\begin{equation}\label{eq:4.55}
 \|1-|w_n|^2\|_2^2\longrightarrow\|1-|\psi_c|^2\|_2^2,
 \qquad E(w_n)\longrightarrow E(\psi_c).
\end{equation}
By the definition of $E$,
\begin{equation*}
 \|\nabla w_n\|_2^2
 =2E(w_n)-\frac12\|1-|w_n|^2\|_2^2,
\end{equation*}
and the same identity holds for $\psi_c$. Thus \eqref{eq:4.55} implies
\begin{equation}\label{eq:4.56}
 \|\nabla w_n\|_2\longrightarrow\|\nabla\psi_c\|_2,
 \qquad \|1-|w_n|^2\|_2\longrightarrow
 \|1-|\psi_c|^2\|_2.
\end{equation}
Weak convergence and \eqref{eq:4.56} yield
\begin{align}
 \|\nabla w_n-\nabla\psi_c\|_2^2
 &=\|\nabla w_n\|_2^2+\|\nabla\psi_c\|_2^2
 -2\Rea\int\nabla w_n\cdot\overline{\nabla\psi_c}\longrightarrow0,
 \notag\\
 \||w_n|^2-|\psi_c|^2\|_2^2
 &=\|1-|w_n|^2\|_2^2+\|1-|\psi_c|^2\|_2^2-2\int(1-|w_n|^2)(1-|\psi_c|^2)\longrightarrow0.
 \label{eq:4.57}
\end{align}
Equations \eqref{eq:4.54}-- \eqref{eq:4.57} prove $\|\nabla w_n-\nabla\psi_c\|_2\to0$ and $\||w_n|^2-|\psi_c|^2\|_2\to0$, as asserted in \eqref{eq:1.19}.

Finally, let $c_n\to c$ and pass to a subsequence on which $\beta_1(c_n)$ converges to its lower limit. Choose a minimizer $v_n$ at speed $c_n$, normalize it by a constant phase, and choose a compact interval $J'\subset(0,\sqrt2)$ containing $c$ in its interior. After discarding finitely many terms, $c_n\in J'$ for every $n$. The local bounds \eqref{eq:1.17} and \eqref{eq:1.6}, used with $J'$, show that the hypotheses of Theorem~\ref{thm:1.3} hold. The profile family is nonempty: if $K=0$, the action splitting \eqref{eq:1.12} would imply $\beta_1(c_n)=I_{c_n}(v_n)\to0$, contradicting $\beta_1(c_n)\ge b_{J'}$. Moreover, \eqref{eq:1.15} makes every profile admissible in \eqref{eq:1.16}. We therefore infer from the action splitting that
\begin{equation*}
 \liminf_{n\to\infty}\beta_1(c_n)
 =\sum_{\ell=1}^K I_c(\psi^\ell)
 \ge K\beta_1(c)\ge\beta_1(c).
\end{equation*}
This proves lower semicontinuity and completes the theorem.
\end{proof}

\section*{Declarations}
\noindent\textbf{Funding.} Changfeng Gui and Shanfa Lai are supported by NSFC Key Program (Grant No.12531010), University of Macau research grants CPG2024-00016- FST, CPG2025-00032-FST, CPG2026-00027-FST, SRG2023-00011-FST, MYRGGRG2023-00139-FST-UMDF, UMDF Professorial Fellowship of Mathematics, Macao SAR FDCT 0003/2023/RIA1 and Macao SAR FDCT 0024/2023/RIB1. G. Qin was supported by National Key R\&D Program of China (Grant 2025YFA1018400) and NNSF of China (Grant 12471190). Juncheng Wei was supported by National Key R\&D Program of China (Grant No.2022YFA1005602), and Hong Kong General Research Fund ``New frontiers in singular limits of nonlinear partial differential equation". The authors acknowledge the use of OpenAI to assist with language editing and manuscript preparation. The authors take full responsibility for the content of the manuscript.

\noindent\textbf{Author contributions.} All authors contributed equally.

\noindent\textbf{Conflict of interest.} On behalf of all authors, the corresponding author states that there is no conflict of interest.

\noindent\textbf{Data availability.} Data availability is not applicable to this article, as no datasets were generated or analysed during the current study.


\begin{thebibliography}{99}

\bibitem{AoHuangLiuWei}
W.~Ao, Y.~Huang, Y.~Liu, and J.~Wei, \emph{Generalized Adler--Moser polynomials and multiple vortex rings for the Gross--Pitaevskii equation}, SIAM J. Math. Anal. \textbf{53} (2021), no.~6, 6959--6992.

\bibitem{BellazziniRuiz}
J.~Bellazzini and D.~Ruiz, \emph{Finite energy traveling waves for the Gross--Pitaevskii equation in the subsonic regime}, Amer. J. Math. \textbf{145} (2023), no.~1, 109--149.

\bibitem{Berloff}
N.~G.~Berloff, \emph{Quantised vortices, travelling coherent structures and superfluid turbulence}, in \emph{Stationary and Time Dependent Gross--Pitaevskii Equations}, Contemp. Math. \textbf{473}, Amer. Math. Soc., Providence, RI, 2008, 27--54.

\bibitem{BethuelGravejatSautSurvey}
F.~B\'ethuel, P.~Gravejat, and J.-C.~Saut, \emph{Existence and properties of travelling waves for the Gross--Pitaevskii equation}, in \emph{Stationary and Time Dependent Gross--Pitaevskii Equations}, Contemp. Math. \textbf{473}, Amer. Math. Soc., Providence, RI, 2008, 55--104.

\bibitem{BethuelGravejatSautKP}
F.~B\'ethuel, P.~Gravejat, and J.-C.~Saut, \emph{On the KP-I transonic limit of two-dimensional Gross--Pitaevskii travelling waves}, Dyn. Partial Differ. Equ. \textbf{5} (2008), no.~3, 241--280.

\bibitem{BethuelGravejatSaut}
F.~B\'ethuel, P.~Gravejat, and J.-C.~Saut, \emph{Travelling waves for the Gross--Pitaevskii equation. II}, Comm. Math. Phys. \textbf{285} (2009), 567--651.

\bibitem{BethuelOrlandiSmets}
F.~B\'ethuel, G.~Orlandi, and D.~Smets, \emph{Vortex rings for the Gross--Pitaevskii equation}, J. Eur. Math. Soc. (JEMS) \textbf{6} (2004), no.~1, 17--94.

\bibitem{BethuelSaut}
F.~B\'ethuel and J.-C.~Saut, \emph{Travelling waves for the Gross--Pitaevskii equation. I}, Ann. Inst. H. Poincar\'e Phys. Th\'eor. \textbf{70} (1999), 147--238.

\bibitem{BurtonVortexPairs}
G.~R.~Burton, \emph{Steady symmetric vortex pairs and rearrangements}, Proc. Roy. Soc. Edinburgh Sect. A \textbf{108} (1988), no.~3--4, 269--290.

\bibitem{CaoLaiQin}
D.~Cao, S.~Lai, and G.~Qin, \emph{Slow traveling-wave solutions for the generalized surface quasi-geostrophic equation}, J. Funct. Anal. \textbf{287} (2024), no.~8, Paper No.~110570, 59~pp.

\bibitem{CaoLaiZhan}
D.~Cao, S.~Lai, and W.~Zhan, \emph{Traveling vortex pairs for $2D$ incompressible Euler equations}, Calc. Var. Partial Differential Equations \textbf{60} (2021), no.~5, Paper No.~190, 16~pp.

\bibitem{ChironHigherDimensional}
D.~Chiron, \emph{Travelling waves for the Gross--Pitaevskii equation in dimension larger than two}, Nonlinear Anal. \textbf{58} (2004), no.~1--2, 175--204.

\bibitem{ChironHelices}
D.~Chiron, \emph{Vortex helices for the Gross--Pitaevskii equation}, J. Math. Pures Appl. (9) \textbf{84} (2005), no.~11, 1555--1647.

\bibitem{ChironRarefaction}
D.~Chiron, \emph{Smooth branch of rarefaction pulses for the nonlinear Schr\"odinger equation and the Euler--Korteweg system in $2d$}, Ann. Henri Lebesgue \textbf{6} (2023), 767--845.

\bibitem{ChironMaris}
D.~Chiron and M.~Mari\c{s}, \emph{Traveling waves for nonlinear Schr\"odinger equations with nonzero conditions at infinity}, Arch. Ration. Mech. Anal. \textbf{226} (2017), 143--242.

\bibitem{ChironPacherieBranch}
D.~Chiron and E.~Pacherie, \emph{Smooth branch of travelling waves for the Gross--Pitaevskii equation in $\mathbb R^2$ for small speed}, Ann. Sc. Norm. Super. Pisa Cl. Sci. (5) \textbf{22} (2021), 1937--2038.

\bibitem{ChironPacherieCoercivity}
D.~Chiron and E.~Pacherie, \emph{Coercivity for travelling waves in the Gross--Pitaevskii equation in $\mathbb R^2$ for small speed}, Publ. Mat. \textbf{67} (2023), no.~1, 277--410.

\bibitem{ChironPacherieUniqueness}
D.~Chiron and E.~Pacherie, \emph{A uniqueness result for the two-vortex traveling wave in the nonlinear Schr\"odinger equation}, Anal. PDE \textbf{16} (2023), no.~9, 2173--2224.

\bibitem{ChironScheid}
D.~Chiron and C.~Scheid, \emph{Multiple branches of travelling waves for the Gross--Pitaevskii equation}, Nonlinearity \textbf{31} (2018), no.~6, 2809--2853.

\bibitem{DavilaDelPinoMedinaRodiac}
J.~D\'avila, M.~del Pino, M.~Medina, and R.~Rodiac, \emph{Interacting helical traveling waves for the Gross--Pitaevskii equation}, Ann. Inst. H. Poincar\'e C Anal. Non Lin\'eaire \textbf{39} (2022), no.~6, 1319--1367.

\bibitem{DeLaireGravejatSmets}
A.~de Laire, P.~Gravejat, and D.~Smets, \emph{Construction of minimizing travelling waves for the Gross--Pitaevskii equation on $\mathbb R\times\mathbb T$}, Tunisian J. Math. \textbf{6} (2024), no.~1, 157--188.

\bibitem{FangGhoussoub}
G.~Fang and N.~Ghoussoub, \emph{Second-order information on Palais--Smale sequences in the mountain pass theorem}, Manuscripta Math. \textbf{75} (1992), no.~1, 81--95.

\bibitem{GravejatSupersonic}
P.~Gravejat, \emph{A non-existence result for supersonic travelling waves in the Gross--Pitaevskii equation}, Comm. Math. Phys. \textbf{243} (2003), no.~1, 93--103.

\bibitem{GravejatDecay}
P.~Gravejat, \emph{Decay for travelling waves in the Gross--Pitaevskii equation}, Ann. Inst. H. Poincar\'e C Anal. Non Lin\'eaire \textbf{21} (2004), 591--637.

\bibitem{GerardZhang}
P.~G\'erard and Z.~Zhang, \emph{Orbital stability of traveling waves for the one-dimensional Gross--Pitaevskii equation}, J. Math. Pures Appl. (9) \textbf{91} (2009), no.~2, 178--210.

\bibitem{GravejatAsymptotics}
P.~Gravejat, \emph{Asymptotics for the travelling waves in the Gross--Pitaevskii equation}, Asymptot. Anal. \textbf{45} (2005), no.~3--4, 227--299.

\bibitem{GravejatFirstOrder}
P.~Gravejat, \emph{First order asymptotics for the travelling waves in the Gross--Pitaevskii equation}, Adv. Differential Equations \textbf{11} (2006), no.~3, 259--280.

\bibitem{GravejatPacherieSmets}
P.~Gravejat, E.~Pacherie, and D.~Smets, \emph{On the stability of the Ginzburg--Landau vortex}, Proc. Lond. Math. Soc. (3) \textbf{125} (2022), no.~5, 1015--1065.

\bibitem{GravejatSonic}
P.~Gravejat, \emph{Limit at infinity and nonexistence results for sonic travelling waves in the Gross--Pitaevskii equation}, Differential Integral Equations \textbf{17} (2004), no.~11--12, 1213--1232.

\bibitem{GravejatSmets}
P.~Gravejat and D.~Smets, \emph{Asymptotic stability of the black soliton for the Gross--Pitaevskii equation}, Proc. Lond. Math. Soc. (3) \textbf{111} (2015), no.~2, 305--353.

\bibitem{Gross}
E.~P.~Gross, \emph{Structure of a quantized vortex in boson systems}, Nuovo Cimento (10) \textbf{20} (1961), no.~3, 454--477.

\bibitem{GuiHamiltonian}
C.~Gui, \emph{Hamiltonian identities for elliptic partial differential equations}, J. Funct. Anal. \textbf{254} (2008), no.~4, 904--933.

\bibitem{JonesPuttermanRoberts}
C.~A.~Jones, S.~J.~Putterman, and P.~H.~Roberts, \emph{Motions in a Bose condensate V. Stability of solitary wave solutions of nonlinear Schr\"odinger equations in two and three dimensions}, J. Phys. A \textbf{19} (1986), 2991--3011.

\bibitem{JonesRoberts}
C.~A.~Jones and P.~H.~Roberts, \emph{Motions in a Bose condensate. IV. Axisymmetric solitary waves}, J. Phys. A \textbf{15} (1982), 2599--2619.

\bibitem{KochLiao}
H.~Koch and X.~Liao, \emph{Conserved energies for the one dimensional Gross--Pitaevskii equation}, Adv. Math. \textbf{377} (2021), Paper No.~107467, 83~pp.

\bibitem{KochLiaoLowRegularity}
H.~Koch and X.~Liao, \emph{Conserved energies for the one dimensional Gross--Pitaevskii equation: low regularity case}, Adv. Math. \textbf{420} (2023), Paper No.~108996, 61~pp.

\bibitem{LinXin}
F.-H.~Lin and J.~X.~Xin, \emph{On the incompressible fluid limit and the vortex motion law of the nonlinear Schr\"odinger equation}, Comm. Math. Phys. \textbf{200} (1999), no.~2, 249--274.

\bibitem{LinWeiObstacle}
F.-H.~Lin and J.~Wei, \emph{Superfluids passing an obstacle and vortex nucleation}, Discrete Contin. Dyn. Syst. \textbf{39} (2019), no.~12, 6801--6824.

\bibitem{LionsConcentration}
P.-L.~Lions, \emph{The concentration-compactness principle in the calculus of variations. The locally compact case. I}, Ann. Inst. H. Poincar\'e C Anal. Non Lin\'eaire \textbf{1} (1984), no.~2, 109--145.

\bibitem{LiuWei}
Y.~Liu and J.~Wei, \emph{Multivortex traveling waves for the Gross--Pitaevskii equation and the Adler--Moser polynomials}, SIAM J. Math. Anal. \textbf{52} (2020), no.~4, 3546--3579.

\bibitem{LiuWangWeiYang}
Y.~Liu, Z.~Wang, J.~Wei, and W.~Yang, \emph{From KP-I lump solution to travelling waves of Gross--Pitaevskii equation}, J. Math. Pures Appl. (9) \textbf{205} (2026), Paper No.~103801.

\bibitem{Maris}
M.~Mari\c{s}, \emph{Traveling waves for nonlinear Schr\"odinger equations with nonzero conditions at infinity}, Ann. of Math. (2) \textbf{178} (2013), 107--182.

\bibitem{MartinezSanchezRuiz}
F.~J.~Mart{\'\i}nez S\'anchez and D.~Ruiz, \emph{Existence and nonexistence of traveling waves for the Gross--Pitaevskii equation in tori}, Math. Eng. \textbf{5} (2023), no.~1, Paper No.~011, 14~pp.

\bibitem{PacherieSurvey}
E.~Pacherie, \emph{A uniqueness result for travelling waves in the Gross--Pitaevskii equation}, S\'eminaire Laurent-Schwartz---EDP et applications (2021--2022), Exp.~No.~XVII, 16~pp.

\bibitem{PacherieRecent}
E.~Pacherie, \emph{Unique and nonunique minimizing travelling waves for some Ginzburg--Landau type equations}, Journ\'ees \'Equations aux D\'eriv\'ees Partielles (2024), Exp.~No.~IX, 10~pp.

\bibitem{Pitaevskii}
L.~P.~Pitaevskii, \emph{Vortex lines in an imperfect Bose gas}, Soviet Phys. JETP \textbf{13} (1961), 451--454.

\bibitem{SerfatyMeanField}
S.~Serfaty, \emph{Mean field limits of the Gross--Pitaevskii and parabolic Ginzburg--Landau equations}, J. Amer. Math. Soc. \textbf{30} (2017), no.~3, 713--768.

\bibitem{SerfatyRotating}
S.~Serfaty, \emph{On a model of rotating superfluids}, ESAIM Control Optim. Calc. Var. \textbf{6} (2001), 201--238.

\bibitem{SmetsVanSchaftingen}
D.~Smets and J.~Van Schaftingen, \emph{Desingularization of vortices for the Euler equation}, Arch. Ration. Mech. Anal. \textbf{198} (2010), no.~3, 869--925.

\bibitem{Struwe}
M.~Struwe, \emph{The existence of surfaces of constant mean curvature with free boundaries}, Acta Math. \textbf{160} (1988), no.~1--2, 19--64.

\bibitem{Tarquini}
E.~Tarquini, \emph{A lower bound on the energy of travelling waves of fixed speed for the Gross--Pitaevskii equation}, Monatsh. Math. \textbf{151} (2007), no.~4, 333--339.

\bibitem{TeschlSchrodinger}
G.~Teschl, \emph{Mathematical Methods in Quantum Mechanics: With Applications to Schrödinger Operators}, 2nd ed., Graduate Studies in Mathematics, vol.~157, American Mathematical Society, Providence, RI, 2014.

\bibitem{WeiYao}
J.~Wei and W.~Yao, \emph{Asymptotic axisymmetry of the subsonic traveling waves to the Gross--Pitaevskii equation}, Comm. Contemp. Math. \textbf{13} (2011), no.~6, 1095--1104.

\end{thebibliography}
\end{document}